\def\andand{\quad\text{and}\quad}
\def\withwith{\quad\text{with}\quad}
\def\BT{\text{BT}}
\def\fcomma{\,,}
\def\fperiod{\,.}
\def\fsemicolon{\,;}
\def\eold{\color{black}}
\def\Tf{T_{\text{f}}}
\def\indeq{\quad{}}
\def\lec{\lesssim}
\def\comma{ {\rm ,\quad{}} }
\newcommand{\buin}{\bu_{\text{in}}}
\newcommand{\bz}{\m{\bom$\zeta$\ubm}}
\newcommand{\bzn}{\m{\bom$\zeta$\ubm}^n}
\newcommand{\bznpo}{\m{\bom$\zeta$\ubm}^{n+1}}
\newcommand{\cpro}{c_{\text P}}
\newcommand{\cpCau}{c_{\text{Pa}}}
\newcommand{\cSob}{c_{\text S}}
\newcommand{\cCau}{c_{\text a}} 
\newcommand{\cSobCau}{c_{\text Sa}} 
\newcommand{\cCautwo}{c_{{\text a}_2}}
\definecolor{coloroooo}{rgb}{0.45,0.0,0.0}
\def\cole{\color{black}}
\definecolor{mygray}{rgb}{.6,.6,.6}
\chardef\forshowkeys=0
  \chardef\refcheck=0
  \chardef\showllabel=0
  \chardef\sketches=0
 \def\llabel#1{\marginnote{\color{lightgray}\rm\small(#1)}[-0.0cm]\notag}
\def\llabel#1{\notag}
\newcommand{\opnorm}{\@ifstar\@opnorms\@opnorm}
\newcommand{\@opnorms}[1]{%
  \left|\mkern-1.5mu\left|\mkern-1.5mu\left|
   #1
  \right|\mkern-1.5mu\right|\mkern-1.5mu\right|
}
\newcommand{\@opnorm}[2][]{%
  \mathopen{#1|\mkern-1.5mu#1|\mkern-1.5mu#1|}
  #2
  \mathclose{#1|\mkern-1.5mu#1|\mkern-1.5mu#1|}
}
\def\colb{\color{black}}
\newtheorem{theorem}{Theorem}[section]
\newtheorem{proposition}[theorem]{Proposition}
\newtheorem{lemma}[theorem]{Lemma}
\newtheorem{corollary}[theorem]{Corollary}
\theoremstyle{definition}
\newtheorem{definition}[theorem]{Definition}
\theoremstyle{remark}
\newtheorem{remark}[theorem]{Remark}
\numberwithin{equation}{section}
\newcommand{\vphi}{\varphi}
\newcommand{\ep}{\varepsilon}
\newcommand{\bom}{\boldmath}           %usato
\newcommand{\m}{\mbox}                     %usato
\newcommand{\ubm}{\unboldmath}        %usato
\newcommand{\bu}{\m{\bom$u$\ubm}}  %usato
\newcommand{\bnabla}{\m{\bom$\nabla$\ubm}}      %usato
\newcommand{\buo}{\bu^1}                                       %usato
\newcommand{\but}{\bu^2}										%usato
\newcommand{\bn}{\m{\bom$n$\ubm}}  %usato
\newcommand{\bv}{\m{\bom$v$\ubm}}
\newcommand{\pardt}{\partial_t}     %usato
\newcommand{\pardy}{\partial_y}
\newcommand{\pardx}{\partial_x}              
\newcommand{\pardyy}{\partial^2_{y}}
\newcommand{\be}{\begin{equation}}
\newcommand{\ee}{\end{equation}}
\newcommand{\bea}{\begin{eqnarray}}
\newcommand{\eea}{\end{eqnarray}}
\newcommand{\beas}{\begin{eqnarray*}}
\newcommand{\eeas}{\end{eqnarray*}}
\newcommand{\Jreg}{J_\ep}
\newcommand{\Freg}{F_\ep}
\newcommand{\bureg}{\bu_\ep}
\begin{document}
\title[Local analyticity for the Euler equations]{On the local analyticity for the Euler equations}
\author{Igor Kukavica } \address{Department of Mathematics, University of  Southern California}
\email{kukavica@usc.edu}
 \author{Maria Carmela Lombardo} \address{Dipartimento di Matematica, Universit\`a di Palermo}
\email{mariacarmela.lombardo@unipa.it}
\author{Marco  Sammartino}\address{Dipartimento di Ingegneria, Universit\`a di Palermo}
\email{marco.sammartino@unipa.it}

\maketitle
\begin{abstract}
In this paper, we study the existence and uniqueness of solutions to the Euler equations with initial conditions that exhibit analytic regularity near the boundary and Sobolev regularity away from it. A key contribution of this work is the introduction of the diamond-analyticity framework, which captures the spatial decay of the analyticity radius in a structured manner, improving upon uniform analyticity approaches. We employ the Leray projection and a nonstandard mollification technique to demonstrate that the quotient between the imaginary and real parts of the analyticity radius remains unrestricted, thus extending the analyticity persistence results beyond traditional constraints. Our methodology combines analytic-Sobolev estimates with an iterative scheme which is nonstandard in the Cauchy-Kowalevskaya framework, ensuring rigorous control over the evolution of the solution. 
These results contribute to a deeper understanding of the interplay between analyticity and boundary effects in fluid equations. They might have implications for the study of the inviscid limit of the Navier-Stokes equations and the role of complex singularities in fluid dynamics.
\end{abstract}

%\begin{keywords}
\vskip.5truecm
\noindent\thanks{\em Keywords\/}:
Euler equations, analyticity
 %  Navier-Stokes equations, zero viscosity limit
%\end{keywords}

\noindent\thanks{\em Mathematics Subject Classification\/}:
35Q31, %Euler equations
35A20,  %	Analyticity in context of PDEs
76B03  	%Existence, uniqueness, and regularity theory for incompressible inviscid fluids

\section{Introduction}
In this paper, we address the existence and uniqueness of solutions of the Euler equations with initial data that are analytic near the boundary and possess Sobolev regularity away from it.

The subject of the construction of analytic solutions of the Euler equations is a classical one, with the first significant results on the topic dating back to the 1970s when Baouendi and Goulaouic proved that analyticity of the initial datum leads to analytic solutions of the Euler equations \cite{BG1975, BG1976a, BG1976b}.  They obtained their results in a general framework of analytic pseudodifferential operators theory and with a Cauchy-Kowalevskaya iteration procedure, limiting their analysis to the case when the dynamics occur on a torus or a sphere.  In such generality, the approach had limitations as it is difficult to obtain the lower bound on the radius of analyticity.  In this context, we also mention~\cite{Del1985}, where Delort extended the results of Baouendi and Goulaouic to the case of a fluid confined in a bounded analytic domain in~$\mathbb{R}^n$.

In a series of papers~\cite{BBZ1976, BB1977, Ben1979}, Bardos, Benachour, and Zerner adopted a concrete approach by writing the Euler equations in the vorticity formulation and constructed analytic solutions of the Euler equations using an iterative scheme introduced in~\cite{BF1976} in the H\"older regularity setting.  Besides giving bounds on the decay of the radius of analyticity, they also proved the persistence of the analytic regularity.  Thus, in 2D, the solution is analytic globally in time, while in 3D it is analytic as long as it remains smooth.

In~\cite{LO1997}, Levermore and Oliver analyzed a generalized Euler 2D system modeling an inviscid fluid moving in a shallow basin with varying bottom topography (for a dissipative version of the same model, see~\cite{LS2001}) and, using energy estimates in the Fourier space, they proved the persistence of Gevrey regularity of solutions.  As a byproduct, they gave an explicit estimate of the decay of the analyticity radius in terms of the $H^r$ Sobolev norm of the vorticity,~$r>5/2$. However, in terms of time dependence and size of initial data, the provided lower bound for the rate was much faster than~\cite{BBZ1976, BB1977, Ben1979}.  Kukavica and Vicol later overcame this drawback of the energy-Fourier approach~\cite{KV2009}.  In the $3D$ case, they also provided a lower bound on the linear rate of decay of the radius of analyticity depending only algebraically on the $H^r$ Sobolev norm of the vorticity.  Using the Fourier representation of the solution, these Gevrey-type results were obtained on domains without boundaries, like $\mathbb{T}^n$ or~$\mathbb{R}^n$.  In~\cite{KV2011a, KV2011b}, the same authors extended the above results to domains with boundaries.

We conclude this brief account of the previous results concerning analytic solutions of the Euler equations by mentioning the results on the propagation of the local analyticity~\cite{AM1984,AM1986,LeB1986}.  While in the classical papers~\cite{AM1984}, Alinhac and M\'etivier discussed the propagation of analyticity for hyperbolic type equations, they provided in~\cite{AM1986} the Lagrangian interior analyticity for solutions of the Euler equations. Subsequently, in~\cite{LeB1986}, Le~Bail established the propagation of Lagrangian analyticity up to the boundary. Finally, the paper~\cite{CK18} contains the preservation of the Lagrangian Gevrey radius.

The motivation for our study is threefold.  The first motivation is a recent breakthrough~\cite{LH2014,LH2019} concerning the possible blow-up of the $3D$ Euler solutions. The results have highlighted the crucial role that boundaries play in a singularity formation.  Moreover, the same results have renewed interest in the analyticity strip method.  In this approach, the spatial coordinates are regarded as complex variables, and the width of the analyticity strip, $\delta(t)$, is identified as the distance from the real axis of the complex singularity of the velocity field closest to the real space.  The concept involves observing the function $\delta(t)$ as it changes over time~$t$ and relies on the idea that a singularity in real space within the context of the Euler equations that occurs at a specific time $T$ does not come ``out of the blue''~\cite{FMB2003}, but is foreshadowed by a non-zero value of $\delta(t)$ that becomes zero at the singularity time; see~\cite{BuB2012,KSP2022, MLB2015} for some instances where the analyticity strip method has been used to monitor the complex 3$D$ Euler singularity before the possible real blow-up.  In this context, we believe that a rigorous result concerning initial data that are analytic close to the boundary only and establishing bounds on the speed at which a complex singularity can travel toward real space can be of interest.

Our second motivation derives from considering the Euler equations as the formal zero-viscosity limit of the Navier-Stokes (NS) equations.  Proving that, in a sense to be specified, the NS solutions converge to the Euler solution is a fundamental problem of the mathematical theory of fluid dynamics.  The analytic setting seems to be the most appropriate to achieve this result.  It is impossible here to review the whole literature concerning this topic.  Here, we mention~\cite{SC1998b}, where the authors constructed the NS solution as a composite asymptotic expansion involving the Euler solution and the Prandtl solution, plus an error term that goes to zero with the square root of the viscosity, thus proving that Euler and Prandtl equations, in their respective domain of validity, are good approximations of the NS equations. We also point out~\cite{NN2018,WWZ2017}, where the use of energy methods allowed the authors to prove, still for analytic data, that the Euler solution is the zero-viscosity limit of the NS solutions.

Recent advancements have shown the need to consider analyticity only near the boundary.  This noteworthy outcome was initially accomplished by Maekawa in~\cite{Mae2014}, wherein the assumption of zero vorticity close to the boundary played a crucial role.  This discovery was later extended to encompass the half-space in~\cite{FTZ2018}.  In the half-plane, Kukavica, Vicol, and Wang~\cite{KVW2020} demonstrated that for the data analytic only in the vicinity of the boundary, coupled with Sobolev regularity elsewhere, leads to the validity of the inviscid limit.  These findings were derived by employing energy-based methodologies applied to the vorticity representation of the Navier-Stokes equations without invoking the matched asymptotic expansion of the solution.  This achievement has also been expanded to a three-dimensional context, as reported in~\cite{Wan2020,KWZ}.

We also mention that the behavior of the analyticity strip of the solutions of the NS solutions has been exploited for a numerical comparison between the high-Reynolds-number solutions and the solutions of the Euler-Prandtl equations, \cite{GSS11, GSSC14a}.  This study has revealed that the structure of the NS complex singularities is much richer than what is displayed by the Euler-Prandtl solutions so a complete understanding of separation and transition to turbulence requires a deeper understanding of the analyticity strip behavior for the NS, Euler, and Prandtl equations.

Our third motivation concerns the region of analyticity of the solution to the Euler equations.  In this paper, we deduce that if the initial data for the Euler equations is analytic close to the boundary, so is the solution on a certain time interval (see~\cite{KNVW} for a detailed argument). However, the approach in~\cite{KVW2020} requires, for the normal variable, that the maximal quotient $q$ between the imaginary and real parts is less than a sufficiently small universal constant.  In fact, the parabolicity requires $q$ to be less than or equal to $1$ as the Green's function grows exponentially in the directions with a larger quotient.  Therefore, even if one could improve the approach in~\cite{KVW2020}, the limitation of the inviscid limit method gives that even if the quotient $q(0)$ is arbitrary, for the solution, one would only obtain $q$ less than 1.  As the Euler equations do not involve Navier-Stokes Green's function, the question is whether one can assume the quotient to be initially arbitrarily large and expect the solution to preserve the quotient for a positive time.  However, the same limitation on the quotient applies to Euler solutions if one uses the vorticity formulation, as in, e.g., \cite{BB1977}.  Expressing the velocity in terms of the vorticity requires using the Biot-Savart kernel, which, after the complexification of the variable, does not allow a quotient larger than 1.

Using the velocity formulation, and employing the Leray projection, we prove in this paper that the answer to the above question is affirmative.  Namely, there is no restriction on the quotient of the radii and the functional space where the solutions belong, allows for that property to continue on a positive time interval. An important ingredient in our treatment is also a nonstandard mollification procedure that we adopt in Section~\ref{subsec::regularization}; the Gaussian, the natural candidate for the mollifier in the complex setting, displays an unbounded growth along the rays with an angle larger than $\pi/4$, which would again lead to the parabolic restriction on the quotient, and is for this reason not suitable for our purposes.

In this paper, we establish the diamond-analyticity of solutions by proving that the solutions remain analytic in a complex domain shaped like a diamond, see Fig.~\ref{fig1.a}.  The diamond-type analyticity is characterized by the property that as we approach the boundary of the domain, the lower bound on the normal analyticity radius of the solution decreases linearly, and as we go far away from the boundary, the analyticity radius decreases linearly.  In contrast, the previous results in~\cite{LeB1986} on local analyticity rely on a uniform-type condition.  Observe that while the uniformly analytic data can be extended across the boundary, thus simplifying the proof, this is not the case with the diamond-analyticity.  Also, we emphasize that the behavior of the analyticity radius we have in our setting is appropriate in the study of the inviscid limit problem, where parabolicity would not allow uniform analyticity of the solution.  Finally, we note that \cite[Section~5]{KNVW} shows how to establish the uniform analyticity from the wedge analyticity if the initial data are uniformly analytic.

We now provide a concise overview of the key concepts presented in this paper.  We shall consider the Euler equations in the velocity formulation.  We suppose that the data have analytic-Sobolev regularity in the sense that, for the normal analyticity, the normal analyticity radius decays close to the boundary at a linear rate.  On the other hand, the tangential analyticity is assumed uniformly close to the boundary.  Away from the boundary, we shall assume that the data have a sufficiently high Sobolev-type regularity.  We shall work with a norm that is the sum of the analytic norm (considering the values close to the boundary) plus a Sobolev norm.  To give {\em a~priori} bounds on the analytic part of the norm, we project the equation onto the divergence-free vector field subspace through the Leray half-space projection operator~$\mathbb{P}$.  We write $\mathbb{P}$ using the Fourier variable in the tangential direction and the physical variable in the normal direction; see \eqref{Pplus_x} and~\eqref{Pplus_y}.  A~key element of our treatment is the analytic estimate of $\mathbb{P}\bu$, presented in Section~\ref{analytic_estimate}.  There, we show that the projection of a vector field enjoying the analytic-Sobolev property is still analytic-Sobolev, which is a challenge because the projection operator is nonlocal, and the value of $\mathbb{P}\bu $ close to the boundary involves values of $\bu$ away from of the boundary where it has only Sobolev-regularity.

The central two ingredients in our treatment are the usual energy estimates to bound the Sobolev part of the norm and the Cauchy estimate to bound the analytic part. However, when constructing our solution, a difficulty arises because the iteration scheme one typically uses in the analytic setting to establish convergence of solutions (see all the available proofs of the abstract Cauchy-Kowalevskaya theorem, see, e.g., \cite{Nir72, Asa88, Saf1995, LCS04}) is fully explicit,
\begin{equation}
\pardt \bu^{n+1}+\bu^n\cdot\bnabla \bu^n+\bnabla p^n=0
\fcomma
\llabel{EQ134}
\end{equation} 
while the appropriate iteration scheme for convergence in Sobolev spaces is implicit,
\begin{equation}
\pardt \bu^{n+1}+\bu^n\cdot\bnabla \bu^{n+1}+\bnabla p^{n+1}=0
\fcomma
\llabel{EQ135}
\end{equation}
since we need to take advantage of the Sobolev derivative reduction.

We circumvent this difficulty using the following steps. First, we consider the linearized version of the Euler equations~\eqref{transport.1}--\eqref{transport.3}.  Second, we perform an analytic regularization of the initial datum and the equation, which admits a globally analytic solution in space for a short time but is independent of the regularization.  Third, we pass to the limit and get an analytic-Sobolev solution of the linearized Euler equation for a time that is short but dependent only on the size of the analytic-Sobolev norm of the initial datum.  Finally, with a Cauchy-Kowalevskaya type argument and a weighted in-time analytic norm, we prove that the iteration procedure supporting energy estimates also preserves the analytic norm.  We emphasize that our iteration scheme is non-typical in the Cauchy-Kowalevskaya setting because the next-step iterate is implicitly defined; see, e.g.,~the scheme~\eqref{EQ41}.  The iterative procedure we adopt uses ideas from the paper of Asano; see~\cite{Asa88}.

\section{The set up and the main result}
\colb
We write the Euler equations in $\mathbb{R}^2_+$ as
  \begin{align}
   &
  \pardt \bu + \bu\cdot \bnabla\bu+\bnabla p =0
  \fcomma
  \label{EQ12}
  \\&
  \bnabla\cdot \bu=0
  \label{EQ13} \text{~and~} \ u_2|_{y=0}=0
  \fcomma
  \\&
  \bu|_{t=0}=\buin
  \fcomma
  \label{EQ14}
  \end{align}
where $\bu=(u_1(x,y,t),u_2(x,y,t))$, with $x\in \mathbb R$ (or $x\in {\mathbb {R}}^2$) and $y\in \mathbb{R}^+$, is the velocity field.

To simplify the presentation, we restrict ourselves to two space dimensions; all theorems and proofs also cover the space dimension three by considering the variable $x$ as a vector rather than a scalar ($x\in \mathbb{R}$ replaced by $x\in\mathbb{R}^{2}$). Higher space dimensions can also be covered by increasing the Sobolev exponents.

Introducing the Leray projector for the half space, $\mathbb{P}$, we may write the above equations
equivalently as
\be
\bu=\mathcal{F}(\bu,t)
\fcomma
\qquad \mbox{where} \quad \mathcal{F}(\bu,t) \equiv
\buin-\int_0^t \mathbb{P}\left(\bu\cdot\bnabla\bu\right)\,ds    \label{euler.operator}
\fperiod
\ee

The explicit expression of the projection operator is given in \eqref{Pplus_x} and \eqref{Pplus_y} below, where we use a mixed representation, Fourier in the $x$-variable 
and physical in the $y$-variable. 
This representation is the most appropriate to get estimates on the
projection operator in the functional setting we shall adopt; see the next section. 
Our main result is the following.

\cole
\begin{theorem}
\label{T01}
Let $0<\theta_0<\pi/2$ and $m\geq 3$. Suppose that $\buin\in H^m_{\theta,\text{D}}\cap H^m_{\text a}$. Then there exists $\beta>0$ 
such that the system \eqref{EQ12}--\eqref{EQ14} admits a unique solution $\bu\in H^m_{\theta-\beta t,\text{D}}\cap H^m_a $ for $t\in [0,\theta/2\beta[$.
\end{theorem}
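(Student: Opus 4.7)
\emph{Overview.} The plan combines two ingredients that are naturally in tension: an implicit iteration scheme, which is needed to exploit the derivative reduction in the Sobolev part of the norm, and a Cauchy--Kowalevskaya-style analysis for the diamond-analytic part. To reconcile them I would follow the strategy in the spirit of Asano~\cite{Asa88}, iterating on the linearized Euler system with an implicit transport and running the analytic estimates in a weighted-in-time norm that plays the role of a Cauchy--Kowalevskaya majorant. The fixed-point formulation~\eqref{euler.operator} will be the target, with the half-space Leray projection in the mixed Fourier-physical representation \eqref{Pplus_x}--\eqref{Pplus_y} handling the pressure.

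\emph{Construction of the iterates.} Starting from $\bu^{(0)}=\buin$, define $\bu^{(n+1)}$ as the solution of the linearized problem obtained by replacing the nonlinear term in~\eqref{EQ12} by $\bu^{(n)}\cdot\bnabla \bu^{(n+1)}$ and projecting out the pressure by $\mathbb P$. For each $n$ the existence of $\bu^{(n+1)}$ will be obtained in three substeps: (i) regularize the data $\buin$ and the drift $\bu^{(n)}$ with the nonstandard mollifier from Section~\ref{subsec::regularization}, producing approximations that are globally analytic in space without the spurious exponential growth in the complex strip that a Gaussian would introduce; (ii) solve the resulting regularized linear transport problem by a standard explicit Cauchy--Kowalevskaya iteration on a time interval whose length depends only on the analytic-Sobolev norms of $\buin$ and $\bu^{(n)}$, not on the mollification parameter; (iii) pass to the limit in the regularization, invoking the Leray projection analytic estimate of Section~\ref{analytic_estimate} to control the pressure term uniformly, and thus obtaining an analytic-Sobolev solution of the linearized equation.

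\emph{Uniform bounds and passage to the limit.} Next, I would establish uniform-in-$n$ a priori bounds in both parts of the norm. The Sobolev bound on $\|\bu^{(n+1)}\|_{H^m_{\text a}}$ follows from standard energy estimates: the divergence-freeness of $\bu^{(n)}$ cancels the top-order transport contribution, and the analytic Leray projection estimate controls $\mathbb P(\bu^{(n)}\cdot\bnabla\bu^{(n+1)})$, absorbing the pressure gradient. The diamond-analytic bound on $\|\bu^{(n+1)}\|_{H^m_{\theta-\beta t,\text{D}}}$ is obtained by Cauchy estimates in a weighted-in-time norm of the kind used in~\cite{Saf1995,LCS04}, trading a linear loss of the analyticity radius at rate $\beta$ for a positive power of the remaining lifespan $\theta/(2\beta)-t$; choosing $\beta$ sufficiently large in terms of the initial norm yields a contraction factor below one. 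This produces a uniform existence time $T=\theta/(2\beta)$, after which compactness in the slightly weaker topology (a marginally smaller analyticity radius) gives a limit $\bu$ solving~\eqref{EQ12}--\eqref{EQ14}.

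\emph{Uniqueness and the main obstacle.} Uniqueness then follows by applying the same analytic-Sobolev estimates to the difference of two hypothetical solutions and closing a Gr\"onwall-type inequality in the weighted norm. The principal obstacle is the combination of the implicit scheme with the Cauchy--Kowalevskaya framework: classical abstract versions of the latter assume an explicit scheme, so the analytic majorant estimate must be rederived to treat the transport coefficient $\bu^{(n)}$ (carrying the full gradient) and the unknown $\bu^{(n+1)}$ asymmetrically, absorbing the contribution of $\bu^{(n+1)}$ into the left-hand side via the weight. A secondary, purely technical obstacle is that $\mathbb P$ is nonlocal in $y$, so its value near the boundary depends on values of its argument far from the boundary where only Sobolev regularity is available; the mixed representation \eqref{Pplus_x}--\eqref{Pplus_y} together with the diamond shape of the radius allow this to be done without imposing any restriction on the quotient between the imaginary and real parts of the normal analyticity radius, which is precisely the improvement over the vorticity-based and Biot--Savart-based approaches.
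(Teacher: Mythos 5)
Your proposal is correct and follows essentially the same route as the paper: an implicit iteration on the linearized system, solved at each step by regularizing with the nonstandard mollifier and passing to the limit via $\ep$-uniform diamond-norm estimates, then uniform-in-$n$ bounds and contraction in an Asano-type weighted norm with $\beta$ large, a slightly weaker topology for the limit (the paper contracts in the $(m-1)$-norm and recovers the $m$-bound by a separate bootstrap), and uniqueness by an energy argument. The only imprecision is minor: the local existence time for the regularized linear problem does degenerate as $\ep\to 0$, and the $\ep$-independent lifespan is recovered by re-initialization, exactly as in Proposition~\ref{exist.transport.regularized}.
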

\colb

Above and in the rest of the paper, if $\bu$ is a vector function, then by $\bu\in H^m_{\theta, \text{D}}$ we mean that all the components of $\bu$ belong to~$ H^m_{\theta, \text{D}}$. The same notational convention holds for other functional spaces.

The definition of the function spaces $H^M_{\theta,\text{D}}$ and $H^m_a$ is given below. Roughly speaking,  $H^M_{\theta,\text{D}}$ is the space of 
functions $f(x,y)$  analytic with respect to $y$ in the region $D_\theta$;
see  Fig.~\ref{fig1.a},   
and with  $x$-Fourier spectrum  which exponentially decays at a rate that 
degrades to zero at the distance $1+\theta$ from the boundary.

\section{Function spaces}

\subsection{Analyticity close to the boundary: analytic norms and  function spaces}
We define the domain of analyticity in the $y$-variable as a diamond-shaped open set
$D_\theta$, see Fig.~\ref{fig1.a}, defined by
\begin{figure}
	\centering
	\begin{subfigure}{0.45\textwidth}
		\includegraphics[alt={analytic domain in $y$},width=\textwidth]{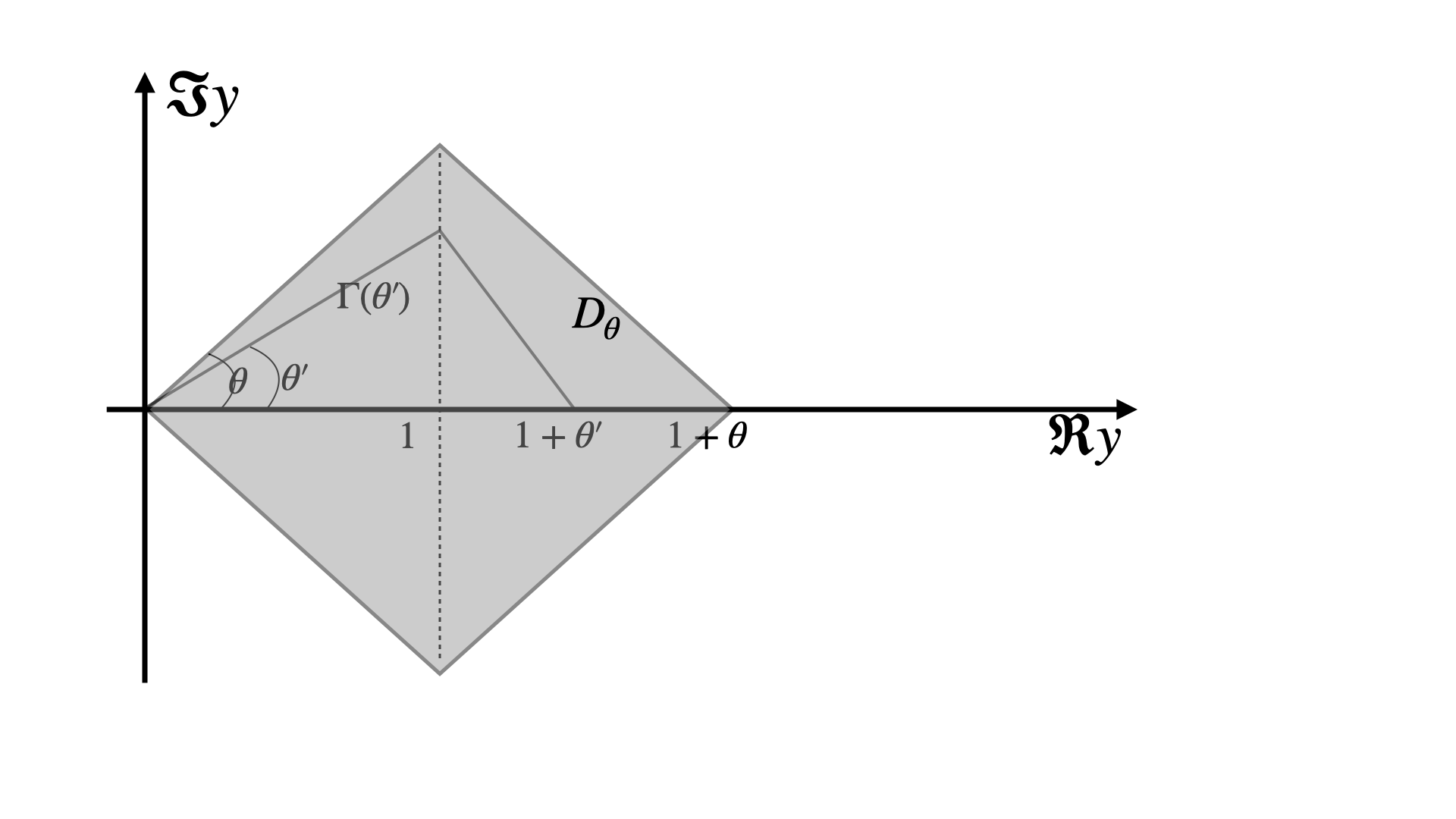}
		\caption{The domain of analyticity in the $y$ variable. The angle $\theta$ satisfies $0<\theta<\pi/2$. }
		\label{fig1.a}
	\end{subfigure}
	\hfill
	\begin{subfigure}{0.45\textwidth}
		\includegraphics[alt={analytic domain in $x$},width=\textwidth]{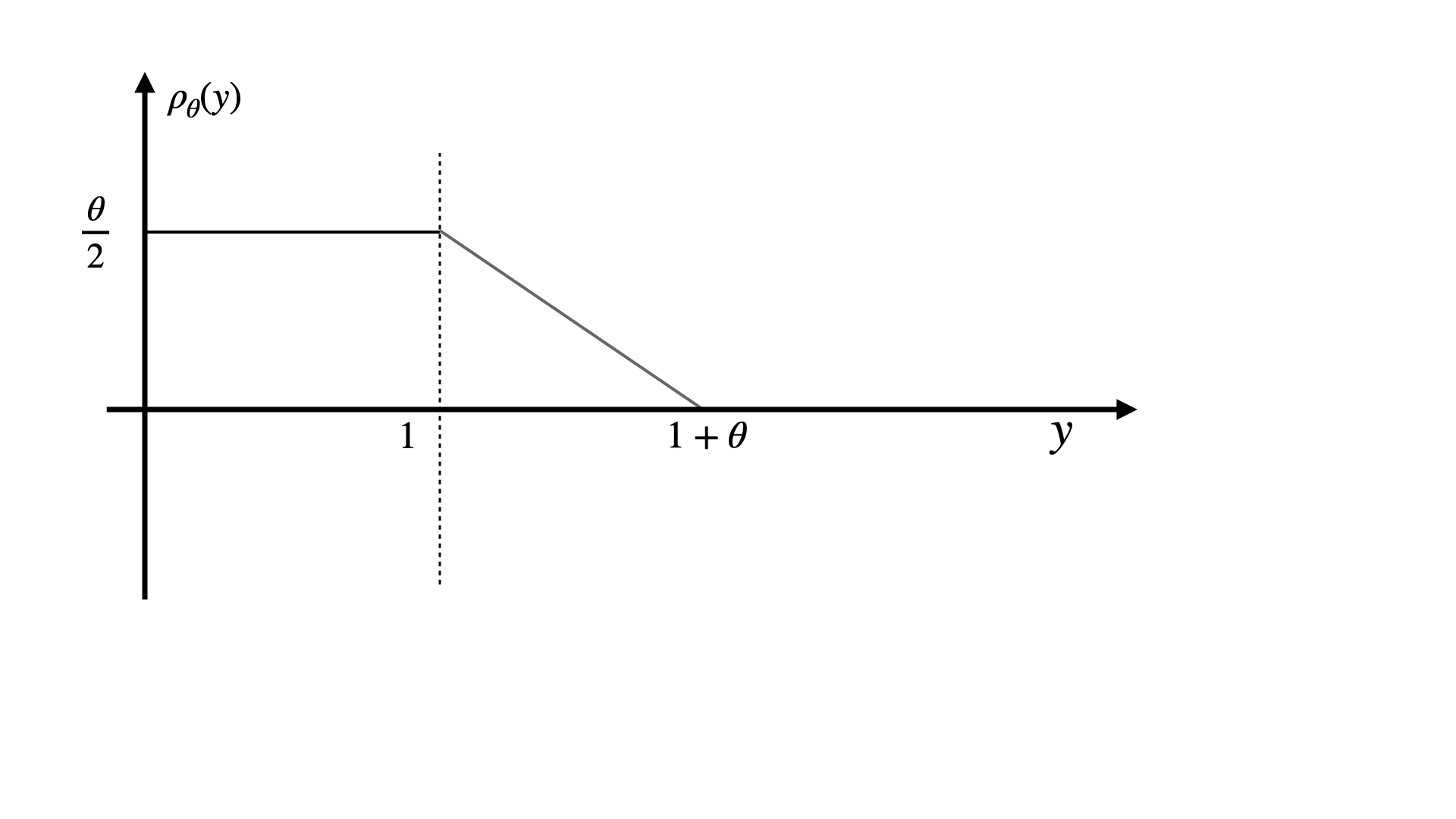}
		\caption{The rate of exponential decay of the spectrum as a function of the distance from the boundary.}
		\label{fig1.b}
	\end{subfigure}
	\hfill
	\caption{Domain of analyticity for functions analytic close to the boundary.}
	\label{fig:figures}
	\end{figure}
  \begin{align}
    \begin{split}
   D_\theta&=\Bigl\{ z\in \mathbb{C}: \; 0<\Re{z}\leq 1, \; |\Im{z}| < \Re{z}\tan{\theta} \Bigr\}
   \\&\indeq
   \bigcup 
   \left\{ z\in \mathbb{C}: \; 1\leq\Re{z}< 1+\theta, \; |\Im{z}|< \left(1+\theta-\Re{z}\right)\frac{\tan{\theta}}{ \theta}\right\}
   \fperiod
  \end{split}
   \label{EQ85}
   \end{align}
We next introduce the $L^2$-based norm for functions~$f$ depending on $x\in \mathbb{R}$ and $y\in D_\theta$.
First, we define the path of integration in the $y$-variable,
  \begin{equation}
    \begin{split}
   \Gamma(\theta')&=\left\{ z\in \mathbb{C}: \; \Re{z}\leq 1, \; \Im{z}= \Re{z}\cdot\tan{\theta'} \right\}
    \\&\indeq
    \bigcup 
   \left\{ z\in \mathbb{C}: \; 1\leq\Re{z}\leq 1+\theta', \; \Im{z}= \left(1+\theta'-\Re{z}\right)\frac{\tan{\theta'}}{ \theta'}\right\}
  \fperiod
  \end{split}
  \label{EQ86}
  \end{equation}
Also, we introduce a $y$-dependent function $\rho_\theta\colon [0,1+\theta]\rightarrow \mathbb{R}$ as 
\begin{equation}
\rho_\theta(s)=
    \begin{cases}
       \theta/2 & 0\leq s\leq 1 \\
                (1+\theta-s)/2 & 1\leq s \leq 1+\theta
\fsemicolon
    \end{cases}
   \llabel{EQ88}
   \end{equation}
see Fig.~\ref{fig1.b},
expressing the rate of exponential decay of the Fourier spectrum in the $x$-variable. 
Then, the diamond norm, or simply the D-norm, of $f(x,y)$ is defined as
\be
|f|^\text{D}_{\theta}=  \left(  \sup_{0\leq \theta'\leq \theta} \int_{\Gamma(\theta')} | dy|  \int_\mathbb{R}d\xi  e^{2\rho_\theta(\Re{y})|\xi|} |\hat{f}(\xi,y)|^2  \right)^{1/2}
\label{EQ101}
\fperiod
\ee
In the above definition, we have denoted by $\hat{f}$ the Fourier transform in the $x$-variable,
  \begin{equation}
   \hat f(\xi)=\int_{-\infty}^\infty f(x)e^{-ix\xi} dx
   \fperiod 
   \label{EQ59}
   \end{equation}
In the sequel, unless necessary for clarity, we shall not distinguish in notation between a function and its Fourier transform. 
We now finally define the analytic space of $(x,y)$-dependent functions involving derivatives up to order~$m$.

\begin{definition}
Let $\theta>0$ and $m\in\mathbb{N}_0$.
The space $H^{m}_{\theta,\text{D}}$ is a set of the $(x,y)$-dependent functions such that
\be
|f|^\text{D}_{m,\theta}=\sum_{i+j\leq m} |\pardx^i\pardy^j f|_{\theta}
<\infty
\fperiod
\label{EQ2.2}
\ee

\end{definition}

We now introduce the time-dependent norms. 
	\begin{definition}
Let $\beta,\theta>0$ and $m\in\mathbb{N}_0$. 
For a function $f$ depending on $(x,y,t)$, we say that  $f\in H^m_{\theta,\beta,D}$
when
$f\in C([0,\tau],H^{m}_{\theta-\beta \tau,\text{D}})$,
for all~$\tau$ such that  $ 0<\tau<\theta/\beta$.
For such $f$, we set
  \begin{equation}
		|f|^\text{D}_{m,\theta,\beta}=\sup_{\theta-\beta t >0}|f(\cdot,\cdot,t)|^\text{D}_{m,\theta-\beta t}
	\fperiod
	\label{EQ119}\end{equation} 
  \end{definition}

\subsection{Analyticity on the half-space: analytic norms and  function spaces}

We now define norms for functions that are analytic in $y$ for all $0<\Re{y}<\infty$ and whose $x$-Fourier spectrum is exponentially decaying uniformly in~$\Re{y}$. 
Introducing 
the conoid
  \begin{equation}
    \begin{split}
  \text{C}_\theta
    &=\bigl\{ z\in \mathbb{C}: \; 0<\Re{z}\leq 1, \; |\Im{z}| < \Re{z}\tan{\theta} \bigr\}
   \\&\indeq
   \bigcup 
  \bigl\{ z\in \mathbb{C}: \; 1\leq\Re{z}< \infty, \; |\Im{z}|< \tan{\theta} \bigr\}
%   \fcomma
  \end{split}
   \label{EQ47}
   \end{equation}
   \begin{figure}
   	\centering
   		\includegraphics[alt={conoid},scale=0.15]{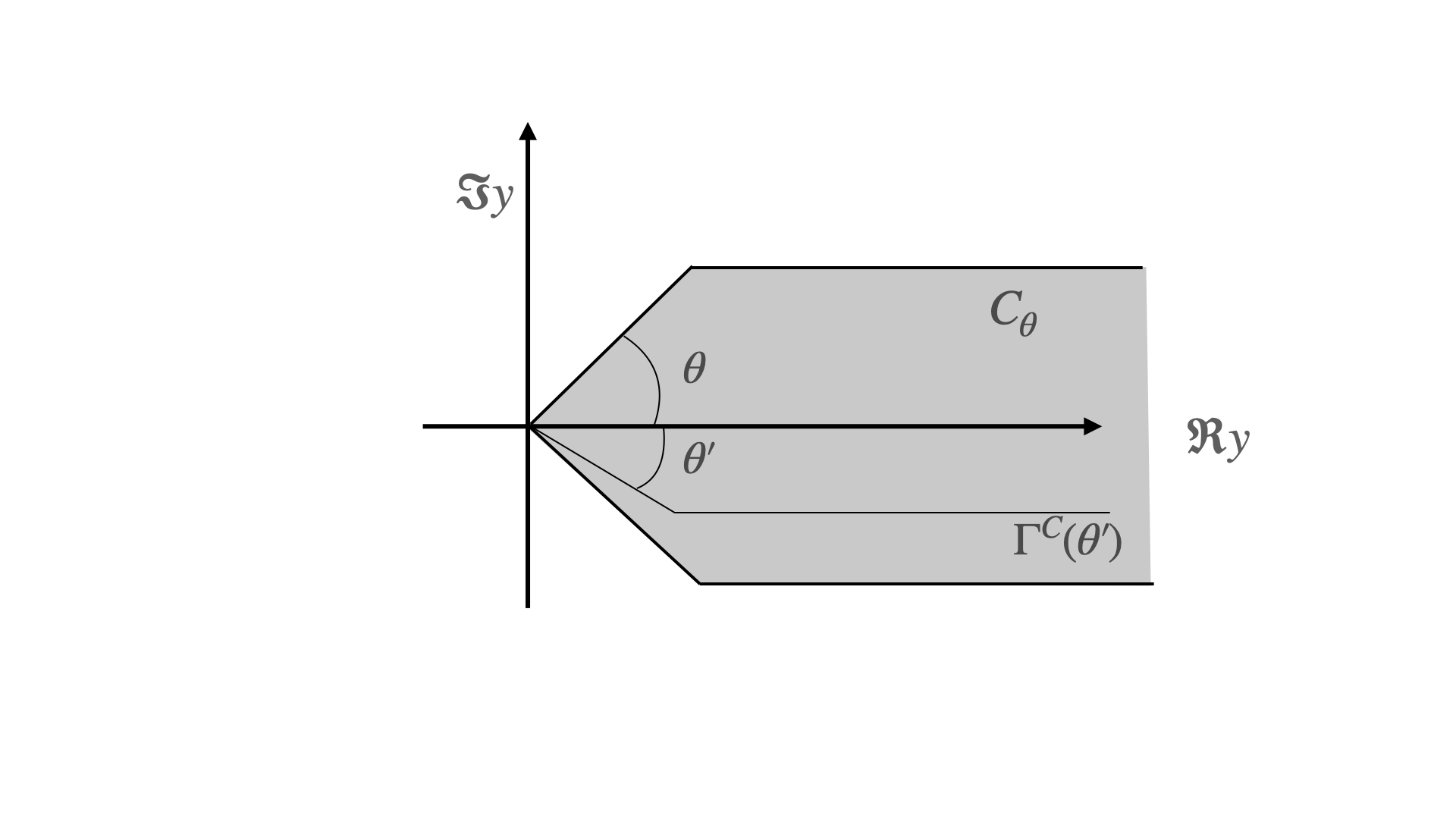}
   		\caption{Domain of analyticity for functions analytic on the half-plane.}
   		\label{fig2}
   	\end{figure}
(see Fig.~\ref{fig2})
and the path of integration in the complex plane 
  \begin{equation}
    \begin{split}
  \Gamma^\text{C}(\theta')&=\left\{ z\in \mathbb{C}: \; \Re{z}\leq 1, \; \Im{z}= \Re{z}\cdot\tan{\theta'} \right\}
   \\&\indeq
   \bigcup 
  \left\{ z\in \mathbb{C}: \; 1\leq\Re{z}< \infty, \; \Im{z}=
  \tan{\theta'}\right\}
   \comma   |\theta'|<\theta
  \fcomma
  \end{split}
  \label{EQ48}
   \end{equation}
with the rate of exponential decay of the $x$-Fourier spectrum
  \begin{equation}
   \rho^\text{C}_\theta(s)=\frac{\theta}{2}
   \fcomma
   \label{EQ89}
   \end{equation}
we define the conoid norm, or simply the C-norm, of $f$ as
\be
   |f|^\text{C}_{\theta}=  \left(\sup_{0\leq \theta'\leq \theta} \int_{\Gamma^\text{C}(\theta')}  |dy|  \int_\mathbb{R}d\xi  e^{2\rho^\text{C}_\theta|\xi|} |\hat{f}(\xi,y)|^2 \right)^{1/2}
   \label{analytic.norm.C}
   \fcomma
  \ee
which is an analog of the norm defined in \eqref{EQ101} for functions analytic close to the boundary.

\begin{definition}
Let $\theta>0$ and $m\in\mathbb{N}_0$.
The space $H^{m}_{\theta,\text{C}}$ is a set of the $(x,y)$-dependent functions such that
  \begin{equation}
  |f|^\text{C}_{m,\theta}=\sum_{i+j\leq m} |\pardx^i\pardy^jf|^\text{C}_{\theta}
   <\infty
   \fperiod
   \label{EQ90}
   \end{equation}
\end{definition}
Note that the norm \eqref{EQ90} is an analog of~\eqref{EQ2.2}
for functions analytic close to the boundary.

\begin{definition}
Let $\theta,\beta>0$ and $m\in\mathbb{N}_0$.
The space $H^{m}_{\theta,\beta,\text{C}}$ is a set of the $(x,y,t)$-dependent functions such that
\be
|f|^\text{C}_{m,\theta,\beta}
=
\sup_{\theta-\beta t >0}
|f(t)|^\text{C}_{m,\theta}   
=
\sup_{\theta-\beta t >0}
\sum_{i+j\leq m} |\pardx^i\pardy^j f|^\text{C}_{\theta}<\infty      
\fperiod
\label{EQ90t}
\ee
\end{definition}
Note that the norms \eqref{EQ90} and \eqref{EQ90t} are analogs of~\eqref{EQ2.2} and~\eqref{EQ119} 
for functions analytic close to the boundary.

\subsection{Sobolev norms}
Let~$a>0$ and $m\in\mathbb{N}_0$. For an $(x,y)$-dependent function $f$, we introduce the notation
\begin{equation}
\|f\|_a=\left(\int_a^{\infty}\int_\mathbb{R}|f(y)|^2\,dx dy\right)^{1/2}
   \label{EQ49}
   \end{equation}
and the Sobolev norms 
  \begin{equation}
   \|f\|_{m,a}=\sum_{i+j\leq m} \| \pardx^i\pardy^jf\|_a
   \fperiod
   \label{EQ50}
  \end{equation}
Also, the space
$H^m_a$ is the set of $(x,y)$-dependent functions such that
  \begin{equation}
   \|f\|_{m,a}<\infty
   \fperiod
   \label{EQ09}
\end{equation}

In the rest of the paper, we fix $a=1/2$ (even though any other value
$a\in(0,1]$ would work), but we keep indicating it in the notation for
the norms.
The value of $a$ is chosen so that the set
$\{y=a\}$ is well inside the analyticity region. In the energy estimates, this allows us to estimate the boundary terms arising at $y=a$ using analyticity. 

\subsection{The weighted norm}
For a fixed $0<\gamma<1$, we introduce a weighted norm 
\be
| f|^{(\gamma)}_{m,\theta,\beta}=\sup_{0\leq \beta t\leq \theta-\theta'} \left(1-\frac{\beta  t}{\theta-\theta'}\right)^\gamma |f(\cdot,\cdot,t)|^\text{D}_{m,\theta'}
   \fperiod
   \label{EQ121}
\ee
It is easy to check that
\be
| f|^{(\gamma)}_{m,\theta,\beta'} \leq 	|f|^\text{D}_{m,\theta,\beta'}
   \leq
   \left(1-\frac{\beta}{\beta'}\right)^{-\gamma} |f|^{(\gamma)}_{m,\theta,\beta}
    \comma \beta'>\beta
    \fperiod
\label{Asano_estimate}
\ee
The inequalities in~\eqref{Asano_estimate} are
crucial in proving the convergence of a sequence of approximations to the solution of the Euler equations.

\subsection{The combined  norms}
We now introduce the norms
  \begin{equation}
	\opnorm{f}_{m,\theta,a}
	=
	|f|^{\text{D}}_{m,\theta}
	+ \norm{f}_{m,a}
	\fcomma
	\label{EQ02}
\end{equation}
\begin{equation}
|||f|||_{m,\theta,\beta,a} =|f|^\text{D}_{m,\theta,\beta}+\|f\|_{m,a}
	\fcomma
\label{EQ123}\end{equation}
with
\begin{equation}
|||f|||^{(\gamma)}_{m,\theta,\beta,a} =|f|^{(\gamma)}_{m,\theta,\beta}+\sup_{0\leq t<\theta/\beta}\|f(\cdot,\cdot,t)\|_{m,a}
\fcomma
\label{EQ124}\end{equation}
which combines  the analytic  and Sobolev norms.

\subsection{Norms with a stopping time}
We introduce the norms with a stopping time~$T$ because in the energy estimates
we need to have a finite width of the strip of analyticity in order to bound the terms that 
arise at $y=a$ using the Cauchy estimate.
 The weighted norm with a stopping time~$T$ reads 
\be
| f|^{(\gamma)}_{m,\theta,\beta,T}=\sup_{\substack{0\leq \beta t\leq \theta-\theta'\\0\leq t\leq T}} \left(1-\frac{\beta  t}{\theta-\theta'}\right)^\gamma 
|f(\cdot,\cdot,t)|^\text{D}_{m,\theta'}
\comma 0<T<\frac{\theta}{\beta}
\fcomma
\label{EQ122}
\ee
while the norm with the stopping time is defined as
\begin{equation}
|||f|||^{(\gamma)}_{m,\theta,\beta,a,T} =|f|^{(\gamma)}_{m,\theta,\beta,T}+\sup_{0\leq t<T}\|f(\cdot,\cdot,t)\|_{m,a} \qquad \mbox{with} \quad 0<T<\theta/\beta
\fperiod
\label{EQ.comb.stop}\end{equation}
Immediate consequences of the above definitions and of the estimate \eqref{Asano_estimate} are the inequalities
	\begin{align}
	||| f|||^{(\gamma)}_{m,\theta,\beta'} &\leq 	|||f|||^\text{D}_{m,\theta,\beta'} \leq  (1-\beta/\beta')^{-\gamma} |||f|||^{(\gamma)}_{m,\theta,\beta} %  \qquad\mbox{with}\quad  \beta>\beta'.
	\label{Asano_estimate_I}
	\end{align}
and
	\begin{align}
	||| f|||^{(\gamma)}_{m,\theta,\beta',T} &\leq 	|||f|||^\text{D}_{m,\theta,\beta',T} \leq  (1-\beta/\beta')^{-\gamma} |||f|||^{(\gamma)}_{m,\theta,\beta,T}  %   \qquad\mbox{with}\quad  \beta>\beta'
	\fcomma
	\label{Asano_estimate_I||} 
	\end{align}
where $\beta'>\beta$.

\section{The analytic estimates} 
\label{analytic_estimate}
\subsection{The projection operator}

This section contains the crucial estimates on the interplay between the D-norm and the Sobolev norms with the projection operators and the bilinear form.

The tangential and normal components, respectively,  of the half-space projection operator $\mathbb{P}$ have the expressions
  \bea
   \mathbb{P}_\tau \bu =u -\frac{1}{2} |\xi|
   \Biggl( \int_0^y \,dy' e^{-|\xi|(y-y')} (u+Rv) +
   \int_0^y \,dy' e^{-|\xi|(y+y')} (u-Rv) \nonumber \\
   +
   \left( 1 +e^{-2|\xi|y}\right)\int_y^\infty \,dy'   e^{ |\xi|(y-y')}
   (u-Rv)
   \Biggr)
   \label{Pplus_x}
  \eea
and
  \bea
   \mathbb{P}_n \bu =\frac{1}{2} |\xi|
   \Biggl(\int_0^y \,dy' e^{-|\xi|(y-y')} (-Ru+v)
   -\int_0^y \,dy' e^{-|\xi|(y+y')}(Ru+v)   \nonumber \\
   +\left( 1 -e^{-2|\xi|y}\right) \int_y^\infty \,dy' e^{ |\xi|(y-y')}
   (Ru+v) \Biggr)
  \fcomma
  \label{Pplus_y}
  \eea
where 
  \begin{equation}
   R=\frac{i\xi}{|\xi|}
   \llabel{EQ36}
   \end{equation}
is the Riesz transform with respect to the tangential variable~\cite{SC1998a}.
As usual, we denote by $\xi$ the Fourier variable of~$x$, omitting to distinguish between a function $f(x)$ and its Fourier transform $\hat{f}(\xi)$ defined in \eqref{EQ59}.
Using the above  expressions,
we have the following estimate for~$\mathbb{P}$.

\cole
\begin{proposition} \label{prop_projection}
Let $\bu\in H^m_{\theta,\text{D}}\cap H^m_a$.
Then $\mathbb{P}\bu \in H^m_{\theta,\text{D}}$, and we have 
  \begin{equation}
   |\mathbb{P}\bu|^\text{D}_{m,\theta}
   \leq
   \cpro
   \opnorm{u}_{m,\theta,a} 
   \fperiod
   \llabel{EQ52}
  \end{equation}
\end{proposition}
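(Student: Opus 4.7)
The plan is to exploit the explicit integral representation of $\mathbb{P}$ in \eqref{Pplus_x}--\eqref{Pplus_y}, which writes $\mathbb{P}\bu$ as a sum of operators of three elementary types applied to the components of $\bu$ and their tangential Riesz transforms: $|\xi|\int_0^y e^{-|\xi|(y-y')}f(y')\,dy'$, $|\xi|\int_0^y e^{-|\xi|(y+y')}f(y')\,dy'$, and $|\xi|(1\pm e^{-2|\xi|y})\int_y^\infty e^{|\xi|(y-y')}f(y')\,dy'$. Since $R=i\xi/|\xi|$ is unitary in $\xi$ for each $y$ and commutes both with contour deformations in $y$ and with multiplication by the weight $e^{\rho_\theta|\xi|}$, it preserves $|\cdot|^{\text{D}}_\theta$ and $\|\cdot\|_{m,a}$; hence it suffices to estimate each of the three scalar kernels acting on a generic $f\in H^m_{\theta,\text{D}}\cap H^m_a$.

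For $y$ on $\Gamma(\theta')$, I would deform the $y'$-integration so that it lies on $\Gamma(\theta')$ wherever $f$ is known to be analytic. For the two $\int_0^y$ integrals this is immediate, since both endpoints already lie on $\Gamma(\theta')$; for the tail $\int_y^\infty$, I deform along $\Gamma(\theta')$ from $y$ to its real endpoint $(1+\theta',0)$, continue on the real axis, and further split the real piece at $y'=1+\theta$, handling $[1+\theta',1+\theta]$ analytically (the integrand is still in $D_\theta$) and $[1+\theta,\infty)$ via the Sobolev norm, which is legitimate because $a=1/2<1+\theta$. Along any analytic portion the kernel modulus is $|\xi|e^{-|\xi|\cos\theta'|y-y'|}$; the exchange of weights $e^{\rho_\theta(\Re y)|\xi|}\to e^{\rho_\theta(\Re y')|\xi|}$ costs at most a factor $e^{(1/2)|\xi|\cos\theta'|y-y'|}$ because $\rho_\theta$ is $\tfrac12$-Lipschitz (constant on $[0,1]$ and with slope $-\tfrac12$ on $[1,1+\theta]$). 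The resulting compensated kernel $|\xi|e^{-(1/2)|\xi|\cos\theta'|y-y'|}$ has $L^1$-norm $O(\sec\theta')$ in $y'$, uniformly in $|\xi|$, so Schur's lemma produces an $L^2(\Gamma(\theta'))$-operator bound depending only on $\theta$.

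For the Sobolev tail, Cauchy--Schwarz in $y'$ gives $|\xi|\bigl|\int_{1+\theta}^\infty e^{|\xi|(y-y')}f(y')\,dy'\bigr|\le\sqrt{|\xi|/2}\,e^{|\xi|(\Re y-(1+\theta))}\|f(\xi,\cdot)\|_{L^2([1+\theta,\infty))}$. The combined exponent in the D-norm is $\rho_\theta(\Re y)+\Re y-(1+\theta)$, which a direct check on the two pieces of $\rho_\theta$ shows is non-positive for every $\Re y\in[0,1+\theta]$, with at worst zero at the diamond tip; the exponential factor is therefore absorbed, and only the prefactor $|\xi|^{1/2}$ survives. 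Squaring and integrating in $\xi$ and in $y$ along $\Gamma(\theta')$, the residual $|\xi|$ is paid for by one tangential derivative of $f$, controlled by $\|f\|_{1,a}\le\|f\|_{m,a}$. For the $m$-th order norm, each $\pardx$ becomes a factor $i\xi$, and each $\pardy$ applied to $|\xi|\int_0^y e^{-|\xi|(y-y')}f(y')\,dy'$ produces a boundary contribution $|\xi|f(y)$ of D-norm bounded by $|f|^{\text{D}}_{1,\theta}$, together with a remainder of the same structure but with one extra $|\xi|$; iterating, every differentiation is absorbed into exactly one order of regularity of $f$, and summing over $i+j\le m$ yields the claimed estimate.

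The genuinely delicate step is the Sobolev-tail bound: the projection $\mathbb{P}$ is nonlocal, so the value of $\mathbb{P}\bu$ inside the diamond (including arbitrarily close to the tip) depends on values of $\bu$ in the far-field Sobolev region, and precisely at the tip both the analyticity weight $\rho_\theta(\Re y)$ and the gap $y'-\Re y$ separating $y$ from the Sobolev domain degenerate simultaneously. The non-positivity of $\rho_\theta(\Re y)+\Re y-(1+\theta)$ holds with zero slack at the tip and is secured exclusively by the factor $\tfrac12$ in the definition of $\rho_\theta$; the precise normalization of the diamond-analyticity framework is therefore not cosmetic but structurally necessary for the projection estimate to close.
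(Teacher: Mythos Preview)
Your approach matches the paper's: reduce to the three scalar kernels, handle the analytic portions by Young's inequality after moving the weight $e^{\rho_\theta|\xi|}$ inside (the paper computes $\rho_\theta(\alpha)-\rho_\theta(\alpha')$ case by case rather than invoking the $\tfrac12$-Lipschitz property, but the content is the same), and deform the $\int_y^\infty$ contour to isolate a Sobolev tail on $[1+\theta,\infty)$. Your kernel modulus $|\xi|e^{-|\xi|\cos\theta'|y-y'|}$ is exact on $\Gamma_1(\theta')$ but off by a harmless constant on $\Gamma_2(\theta')$, where the direction vector is $1-i\tan\theta'/\theta'$ rather than $1+i\tan\theta'$; one still gets $|\xi|e^{-c_\theta|\xi||\alpha-\alpha'|}$, which is all that is needed.

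There is, however, one genuine gap in your Sobolev tail bound. After Cauchy--Schwarz you obtain $\sqrt{|\xi|/2}\,e^{|\xi|(\Re y-(1+\theta))}\|f(\xi,\cdot)\|_{L^2}$ and propose to pay the residual $|\xi|$ with one tangential derivative of $f$. At the top order $i+j=m$ this forces $\|f\|_{m+1,a}$, one derivative more than the proposition allows. The remedy is to perform the $y$-integration along $\Gamma_2(\theta')$ \emph{before} spending any derivative: on $\Gamma_2$ the combined exponent is exactly $-(1+\theta-\alpha)/2$, so $\int_1^{1+\theta'} e^{-|\xi|(1+\theta-\alpha)}\,d\alpha\le|\xi|^{-1}$ cancels the residual $|\xi|$; on $\Gamma_1$ the exponent is $\le-\theta/2$ uniformly, so any polynomial in $|\xi|$ is absorbed outright. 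With this change the tail is controlled by $\|f\|_{m,a}$ as stated. Your observation that the non-positivity of $\rho_\theta(\Re y)+\Re y-(1+\theta)$ rests on the factor $\tfrac12$ in $\rho_\theta$ is correct and is precisely the structural point; the paper in fact omits the tail estimate entirely, so your explicit treatment fills in a detail the paper leaves implicit.
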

\colb

Here and in the sequel, we indicate by $\cpro$ a constant resulting from the estimate of the projection operator. 
Proposition~\ref{prop_projection} is proven
in Section~\ref{subsect_projection_est}.
The next proposition shows an interesting property of the projection operator. 
Although $\mathbb{P}$  involves (by means of integration in the
$y$-variable) the domain of non-analyticity of a function~$\bu$, it nevertheless supports the
Cauchy estimate.

\cole
\begin{proposition}
\label{P01}
Let $m\geq3$ and 
$\bu,\bv\in H^m_{\theta',\text{D}}\cap H^m_a$ with $\bu$ and $\bv$ divergence-free
and $\gamma_n \bu^2 = \gamma_n \bv^2 =0$.
Suppose that $\theta<\theta'$. Then
$\mathbb{P}(\bv\cdot\bnabla\bu)\in H^m_{\theta,\text{D}}$, and we have an estimate
  \begin{equation}
   |\mathbb{P}(\bv\cdot\bnabla\bu)|^\text{D}_{m,\theta}
     \leq
     \frac{\cpCau}{\theta'-\theta}
     \bigl(|\bv|^\text{D}_{m,{\theta'}}+ \|\bv\|_{m,a}\bigr)
     \bigl(|\bu|^\text{D}_{m,\theta'}+ \|\bu\|_{m,a}\bigr)
    \fperiod
   \label{EQ11}
  \end{equation}
\end{proposition}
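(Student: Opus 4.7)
My plan is to apply Proposition~\ref{prop_projection} to the vector field $\bff:=\bv\cdot\bnabla\bu$, which reduces the proof to estimating the two summands of
\[
\opnorm{\bv\cdot\bnabla\bu}_{m,\theta,a}=|\bv\cdot\bnabla\bu|^{\text D}_{m,\theta}+\|\bv\cdot\bnabla\bu\|_{m,a}
\]
separately, each by the product $(|\bv|^{\text D}_{m,\theta'}+\|\bv\|_{m,a})(|\bu|^{\text D}_{m,\theta'}+\|\bu\|_{m,a})$ with the Cauchy factor $(\theta'-\theta)^{-1}$. The divergence-free and non-penetration hypotheses are not needed for Proposition~\ref{prop_projection} itself but will be crucial in handling the Sobolev piece.

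For the diamond part, I would rely on two ingredients. The first is the algebra inequality $|fg|^{\text D}_{m,\theta}\leq C|f|^{\text D}_{m,\theta}|g|^{\text D}_{m,\theta}$, valid for $m\geq 2$ in two dimensions by Sobolev embedding applied on each integration contour~$\Gamma(\theta')$. The second is the Cauchy estimate $|\partial_x f|^{\text D}_{m,\theta}+|\partial_y f|^{\text D}_{m,\theta}\leq \frac{C}{\theta'-\theta}|f|^{\text D}_{m,\theta'}$: the tangential part follows from the elementary bound $|\xi|\leq \frac{C}{\theta'-\theta}e^{(\rho_{\theta'}-\rho_\theta)|\xi|}$, while the normal part is obtained by deforming the contour $\Gamma(\theta)$ inside $\Gamma(\theta')$ and invoking the analyticity of~$f$ in~$D_{\theta'}$. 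Taking $f=\bv$ and $g=\bnabla\bu$ then gives $|\bv\cdot\bnabla\bu|^{\text D}_{m,\theta}\leq \frac{C}{\theta'-\theta}|\bv|^{\text D}_{m,\theta'}|\bu|^{\text D}_{m,\theta'}$.

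The Sobolev piece $\|\bv\cdot\bnabla\bu\|_{m,a}$ is the main obstacle. Applying the Kato--Ponce/Moser inequality (valid for $m\geq 3$ in two dimensions via $H^{m-1}\hookrightarrow W^{1,\infty}$), all terms in the Leibniz expansion of $\partial^\alpha(\bv\cdot\bnabla\bu)$ with at least one derivative diverted to $\bv$ are controlled by $C\|\bv\|_{m,a}\|\bu\|_{m,a}$; however, the top-order term $\bv\cdot\partial^\alpha\bnabla\bu$ with $|\alpha|=m$ formally requires $m+1$ derivatives of~$\bu$, which are not supplied by the Sobolev norm. To recover this missing derivative, I would split $\{y\geq a\}$ into the slab $\{a\leq y\leq 1+\theta\}$, which sits strictly inside $D_{\theta'}$ with margin $\theta'-\theta$, and the exterior $\{y>1+\theta\}$. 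On the slab, a Cauchy estimate for analytic functions gives $\|\bnabla\partial^\alpha\bu\|_{L^2(a\leq y\leq 1+\theta)}\leq \frac{C}{\theta'-\theta}|\bu|^{\text D}_{m,\theta'}$; on the exterior, I would use the divergence-free identity $\bv\cdot\bnabla\partial^\alpha\bu=\bnabla\cdot(\bv\partial^\alpha\bu)$ together with the no-penetration condition $\gamma_n\bv^2=0$ and the trace of the analytic data on $\{y=1+\theta\}$ (bounded by $|\bu|^{\text D}_{m,\theta'}$) to integrate by parts and shift one derivative onto~$\bv$, so that only $m$ derivatives of each factor appear in the final bound. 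Substituting the resulting bound on $\opnorm{\bv\cdot\bnabla\bu}_{m,\theta,a}$ back into Proposition~\ref{prop_projection} yields the inequality~\eqref{EQ11}.
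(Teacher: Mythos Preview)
Your reduction via Proposition~\ref{prop_projection} breaks down at the Sobolev piece, and the paper deliberately avoids this route. To invoke that proposition at level~$m$ you would need $\bv\cdot\bnabla\bu\in H^m_a$, but on the region $\{y>1+\theta'\}$ the data are only in $H^m$, so $\bv\cdot\bnabla\bu$ lies merely in $H^{m-1}_a$; the quantity $\|\bv\cdot\bnabla\bu\|_{m,a}$ you propose to estimate genuinely contains $m+1$ derivatives of~$\bu$ in a region where no analyticity is available. Your fix---writing $\bv\cdot\bnabla\partial^\alpha\bu=\bnabla\cdot(\bv\,\partial^\alpha\bu)$ and ``integrating by parts''---does not make sense for an $L^2$ norm: rewriting a function as a divergence does not lower its derivative count, and integration by parts transfers derivatives across an inner product, not inside a norm. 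There is no test function here to receive the derivative.

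The paper's proof is built precisely to sidestep this obstruction. It never bounds $\|\bv\cdot\bnabla\bu\|_{m,a}$; instead it works directly with the explicit kernels \eqref{Pplus_x}--\eqref{Pplus_y} and uses the smoothing of the exponential factor $|\xi|e^{\pm|\xi|(y-y')}$ to absorb the extra derivative. For tangential derivatives the mechanism is the Cauchy trick $|\eta|\,e^{\theta|\eta|/2}\lesssim(\theta'-\theta)^{-1}e^{\theta'|\eta|/2}$ applied to the convolution in~$\xi$ (see \eqref{EQ94}--\eqref{EQ95}). For the single dangerous normal term $v_2\,\partial_y^{m+1}u_1$, the paper integrates by parts \emph{in the $y'$ variable inside the projection integral}: the $\partial_{y'}$ hits the exponential kernel and returns a factor~$|\xi|$, converting the excess normal derivative into a tangential one that the previous mechanism handles (this is where the incompressibility and the boundary condition $u_2|_{y=0}=0$ enter, to kill the boundary contribution at $y'=0$). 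That manoeuvre is only available because the projection is still written as an integral operator; once you pass to the black-box estimate of Proposition~\ref{prop_projection}, the structure that makes it work is lost.
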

\colb

Here and in the sequel, we use $\cpCau$ to indicate a constant deriving from the estimate of the projection operator and from the use of the Cauchy estimate. 
The next lemma is a consequence of the Cauchy estimate.

\cole
\begin{lemma} \label{P02}
Suppose that $\buo$ and $\but$ belong to $H^{m}_{ \theta',\text{D}}$,
where
	$m\geq 3$, 
	with $\gamma_n \buo = \gamma_n \but =0$.
For $0<\theta<\theta'$, we have
	\be
	|  \buo \cdot \bnabla \buo -\but \cdot \bnabla \but |^\text{D}_{m, \theta}
	\leq
	\cCau
	\frac{(|\buo|^\text{D}_{m,\theta'} + | \but|^\text{D}_{m , \theta'})|\buo - \but|^\text{D}_{m , \theta'}}{\theta' - \theta}
	\llabel{EQ82old}
	\fperiod
	\ee
\eold
\end{lemma}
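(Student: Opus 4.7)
The plan is to combine the standard bilinear decomposition with the algebra structure of $H^m_{\theta,\text{D}}$ and the Cauchy estimate in the analyticity radius. Writing
\[
\buo \cdot \bnabla \buo - \but \cdot \bnabla \but
= (\buo - \but) \cdot \bnabla \buo + \but \cdot \bnabla(\buo - \but)
\]
reduces the task to controlling, in the diamond norm at radius $\theta$, two products of the form $\bv \cdot \bnabla \bw$, where the pairs $(\bv,\bw)$ are $(\buo-\but,\buo)$ and $(\but,\buo-\but)$. For each such product I would invoke the algebra property of $H^m_{\theta,\text{D}}$, which holds for $m \geq 3 > 1$ since the underlying $(x,y)$ variable is two-dimensional: by Sobolev embedding, together with the subadditivity $e^{\rho_\theta(\Re y)|\xi|} \leq e^{\rho_\theta(\Re y)|\eta|}e^{\rho_\theta(\Re y)|\xi-\eta|}$ that distributes the Fourier weight through the $x$-convolution corresponding to the product, one obtains
\[
|\bv \cdot \bnabla \bw|^\text{D}_{m,\theta}
\leq C\,|\bv|^\text{D}_{m,\theta}\,|\bnabla \bw|^\text{D}_{m,\theta}.
\]

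The second step is to trade the gradient for a loss of analyticity radius via the Cauchy estimate. For the tangential derivative, the pointwise inequality $|\xi|\,e^{\rho_\theta(\Re y)|\xi|} \leq C(\rho_{\theta'}(\Re y)-\rho_\theta(\Re y))^{-1}e^{\rho_{\theta'}(\Re y)|\xi|}$ with $\rho_{\theta'}-\rho_\theta = (\theta'-\theta)/2$ on $[0,1+\theta]$ gives
\[
|\pardx \bw|^\text{D}_{m,\theta} \leq \frac{C}{\theta'-\theta}\,|\bw|^\text{D}_{m,\theta'}.
\]
For the normal derivative $\pardy$, Cauchy's integral formula applied by shifting the integration contour from $\Gamma(\theta'')$ to a slightly wider path inside $D_{\theta'}$ produces the same loss $1/(\theta'-\theta)$. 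Combining these bounds with the monotonicity $|\bv|^\text{D}_{m,\theta} \leq |\bv|^\text{D}_{m,\theta'}$ and summing the two summands of the bilinear decomposition yields the stated inequality, with $\cCau$ absorbing the algebra and geometric constants.

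The main obstacle is the Cauchy estimate in the diamond geometry, since the distance from a point of $\Gamma(\theta)$ to $\partial D_{\theta'}$ degenerates near the two vertices of the diamond (at the origin and at $1+\theta$). The remedy is to exploit the supremum over $\theta''\leq \theta$ built into the diamond norm and perform the path shift in the angular variable: then $|\pardy \bw|$ along $\Gamma(\theta'')$ can be controlled in $L^2$ by values of $\bw$ along a nearby path $\Gamma(\tilde{\theta})$ with $\theta'' < \tilde{\theta} < \theta'$, which lies strictly inside $D_{\theta'}$ at positive relative distance uniformly in $\theta''$. The boundary hypotheses $\gamma_n \buo = \gamma_n \but = 0$ play no role in this bilinear estimate; they are recorded for consistency with the ambient hypotheses of Theorem~\ref{T01}.
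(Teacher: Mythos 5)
Your overall architecture (bilinear splitting, algebra property distributed through the tangential Fourier convolution, Cauchy estimate to absorb the extra derivative at the cost of $(\theta'-\theta)^{-1}$) matches the paper's treatment of the tangential part, and your Fourier-side inequality $|\xi|e^{\rho_\theta|\xi|}\lesssim(\theta'-\theta)^{-1}e^{\rho_{\theta'}|\xi|}$ is exactly the paper's estimate \eqref{EQ94}. The gap is in the normal derivative. The intermediate estimate you rely on, $|\pardy\bw|^{\text{D}}_{m,\theta}\leq C(\theta'-\theta)^{-1}|\bw|^{\text{D}}_{m,\theta'}$, is false: the function $\bw(x,y)=\phi(x)\,y^{m+1/4}$ (principal branch, $\phi$ with rapidly decaying Fourier transform) belongs to $H^m_{\theta',\text{D}}$ for every $\theta'<\pi/2$, yet $\pardy^{m+1}\bw\sim y^{-3/4}$ fails to be square-integrable on any path $\Gamma(\theta'')$ near the origin, so $|\pardy\bw|^{\text{D}}_{m,\theta}=\infty$. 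Your proposed remedy --- shifting to a nearby ray $\Gamma(\tilde{\theta})$ at ``positive relative distance'' --- does not repair this, because Cauchy's formula needs the absolute distance to the region where $\bw$ is controlled; both rays emanate from the origin, that distance degenerates like $\Re{y}\cdot(\theta'-\theta)$ there, and the resulting factor $1/\Re{y}$ is exactly what makes the $L^2(\Gamma)$ integral diverge. This degeneracy at the inner vertex of the diamond is intrinsic and cannot be removed by adjusting the contour. (The outer vertex is harmless, since $D_{\theta'}$ extends to $1+\theta'>1+\theta$ and the distance there is bounded below by a multiple of $\theta'-\theta$.)

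What closes the estimate is precisely the hypothesis you dismiss. In $\bv\cdot\bnabla\bw=v_1\pardx\bw+v_2\pardy\bw$ the normal derivative only ever appears multiplied by the normal component $v_2$, and $\gamma_n\bv=0$ forces $|v_2|\lesssim|\Re{y}|$ near the origin, cancelling the $1/\Re{y}$ loss of the normal Cauchy estimate; your factorization $|\bv\cdot\bnabla\bw|^{\text{D}}_{m,\theta}\leq C|\bv|^{\text{D}}_{m,\theta}|\bnabla\bw|^{\text{D}}_{m,\theta}$ discards this coupling, after which the argument cannot be closed. For comparison, in the paper's proof of the projected analogue (Proposition~\ref{P01}) every Leibniz term carrying at least one tangential derivative on the high-order factor is handled on the Fourier side exactly as you do; incompressibility converts $\pardy^{m+1}u_2$ into $-\pardx\pardy^{m}u_1$; and the single remaining term $u_2\pardy^{m+1}u_1$ is treated by integrating by parts in $y$, using $u_2|_{y=0}=0$ and the divergence-free condition to transfer the normal derivative to a tangential one. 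So the boundary (and divergence-free) structure is not decorative: it is the only reason the top-order normal derivative is controllable at all, and any proof of Lemma~\ref{P02} must keep $v_2$ attached to $\pardy$ rather than separating the factors.
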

\colb

By $\cCau$, we indicate a constant derived from  the Cauchy estimate for analytic functions.

The next proposition is crucial in our treatment of the Euler equations in the D-analytic space.

\cole%%%out
\begin{proposition}
\label{P04}
Let $m\geq3$  and $\bu^{1},\bu^{2}\in H^m_{\theta',\text{D}}\cap H^m_a$,
where $a\in[0,1]$,
with $\bu^{1}$ and $\bu^{2}$ divergence-free
and $\gamma_n \bu^1_2 = \gamma_n \bu^2_2 =0$.
For $0<\theta<\theta'$, we have
  \begin{equation}
    \begin{split}
   &
   | \mathbb{P}(\bu^{1}\cdot\bnabla\bu^{1})
    -\mathbb{P}(\bu^{2}\cdot\bnabla\bu^{2})
   |^\text{D}_{m,\theta}
    \\&\indeq
     \leq
     \frac{\cpCau}{\theta'-\theta}
     \left(
        |\bu^{1}|^\text{D}_{m,\theta',a}+ \Vert\bu^{2}\Vert_{m,a}\right)
      \left(|\bu^{1}-\bu^{2}|^\text{D}_{m,\theta',a} + \Vert\bu^{1}-\bu^{2}\Vert_{m,a}\right)
    \fperiod
  \end{split}
  \label{EQ116}
  \end{equation}
\end{proposition}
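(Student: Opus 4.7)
The plan is to reduce this to Proposition~\ref{P01} via the standard bilinear decomposition. First I would write
\begin{equation}
\bu^{1}\cdot\bnabla\bu^{1}-\bu^{2}\cdot\bnabla\bu^{2}
=(\bu^{1}-\bu^{2})\cdot\bnabla\bu^{1}+\bu^{2}\cdot\bnabla(\bu^{1}-\bu^{2})
\fcomma
\end{equation}
and apply the Leray projector $\mathbb{P}$, using its linearity. Before invoking Proposition~\ref{P01} on each piece, I must verify that its hypotheses are preserved by the difference $\bu^{1}-\bu^{2}$: divergence-freeness of $\bu^{1}-\bu^{2}$ follows from the linearity of $\bnabla\cdot$, and the vanishing of the normal trace $\gamma_n(\bu^{1}-\bu^{2})_2=0$ follows from the same linearity in the boundary condition. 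Since $\bu^{1},\bu^{2}\in H^{m}_{\theta',\text{D}}\cap H^{m}_{a}$, so does their difference.

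Next I would apply Proposition~\ref{P01} separately to the two terms, in each case choosing the pair $(\bv,\bu)$ so that both entries are divergence-free with vanishing normal trace. For the first term, set $\bv=\bu^{1}-\bu^{2}$ and $\bu=\bu^{1}$; for the second, $\bv=\bu^{2}$ and $\bu=\bu^{1}-\bu^{2}$. This yields
\begin{align*}
|\mathbb{P}((\bu^{1}-\bu^{2})\cdot\bnabla\bu^{1})|^{\text{D}}_{m,\theta}
&\leq\frac{\cpCau}{\theta'-\theta}
\bigl(|\bu^{1}-\bu^{2}|^{\text{D}}_{m,\theta'}+\|\bu^{1}-\bu^{2}\|_{m,a}\bigr)
\bigl(|\bu^{1}|^{\text{D}}_{m,\theta'}+\|\bu^{1}\|_{m,a}\bigr)
\fcomma\\
|\mathbb{P}(\bu^{2}\cdot\bnabla(\bu^{1}-\bu^{2}))|^{\text{D}}_{m,\theta}
&\leq\frac{\cpCau}{\theta'-\theta}
\bigl(|\bu^{2}|^{\text{D}}_{m,\theta'}+\|\bu^{2}\|_{m,a}\bigr)
\bigl(|\bu^{1}-\bu^{2}|^{\text{D}}_{m,\theta'}+\|\bu^{1}-\bu^{2}\|_{m,a}\bigr)
\fperiod
\end{align*}

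Finally I would add the two inequalities and use the triangle inequality together with the definition \eqref{EQ02} of the combined norm $\opnorm{\cdot}_{m,\theta',a}$ to bound $\bigl(|\bu^{1}|^{\text{D}}_{m,\theta'}+\|\bu^{1}\|_{m,a}\bigr)+\bigl(|\bu^{2}|^{\text{D}}_{m,\theta'}+\|\bu^{2}\|_{m,a}\bigr)$ by $\opnorm{\bu^{1}}_{m,\theta',a}+\|\bu^{2}\|_{m,a}$ up to the already-present $|\bu^{2}|^{\text{D}}_{m,\theta'}$ contribution, collecting everything into the factor $|\bu^{1}|^{\text{D}}_{m,\theta',a}+\|\bu^{2}\|_{m,a}$ on the right-hand side of~\eqref{EQ116}. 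The resulting bound is exactly the claim, with the constant $\cpCau$ absorbing the factor of~$2$.

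No substantial obstacle is anticipated, since the entire argument is algebraic once Proposition~\ref{P01} is in hand; the only points that require care are the verification that the hypotheses on divergence-freeness and vanishing normal trace transfer to the difference $\bu^{1}-\bu^{2}$, and the bookkeeping to present the final estimate in the asymmetric form displayed in~\eqref{EQ116} rather than the symmetric form that naturally arises from summing the two bounds.
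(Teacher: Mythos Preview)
Your approach is correct and matches the paper's: the paper simply states that Proposition~\ref{P04} follows immediately from Proposition~\ref{P01} and Lemma~\ref{P02}, where Lemma~\ref{P02} encodes exactly the bilinear decomposition you wrote down, and Proposition~\ref{P01} is then applied to each piece. One small caution: your final paragraph on ``bookkeeping'' the sum into the asymmetric factor $|\bu^{1}|^{\text{D}}_{m,\theta',a}+\|\bu^{2}\|_{m,a}$ is vague, and in fact the notation $|\cdot|^{\text{D}}_{m,\theta',a}$ in~\eqref{EQ116} is not defined elsewhere in the paper and is almost certainly a typographical slip for the combined norm $\opnorm{\cdot}_{m,\theta',a}$; the symmetric bound $\bigl(\opnorm{\bu^{1}}_{m,\theta',a}+\opnorm{\bu^{2}}_{m,\theta',a}\bigr)\opnorm{\bu^{1}-\bu^{2}}_{m,\theta',a}$ that you naturally obtain is what is actually needed downstream, so there is nothing further to massage.
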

\colb%%%out

From this proposition, 
it follows that the operator $\mathcal{F}$, defined in \eqref{euler.operator}, is quasi-contractive.
This, ultimately, will lead us to prove 
that the iteration scheme we shall introduce in Section~\ref{sec06} is contractive in the appropriate time-weighted function space;
see Section~\ref{subsec:contractiveness} and Proposition~\ref{prop:contractiveness} below.

Proposition~\ref{P04} follows immediately from
Proposition~\ref{P01} and Lemma~\ref{P02}.

\section{The linearized problem}

In this section, we analyze
the linearized Euler
equation
  \begin{align}
   &\pardt \bu +\bv\cdot \bnabla \bu +\bnabla p=0
   \fcomma
   \label{transport.1}
   \\&
   \bnabla\cdot\bu=0
   \fcomma
   \label{transport.2}
   \\&
   \bu|_{t=0}=\buin \label{transport.3}
   \fcomma
  \end{align}
for $t\in[0,\Tf]$.
\colb
In the next section, $\bv$ and $\bu$ represent consecutive iterates
of the approximation scheme.
The main hypothesis in this section is
$\bv\in H^m_{\theta_0-\beta t,\text{D}}\cap H^m_a$ and
$\buin\in H^m_{\theta_0,\text{D}} \cap H^m_a$. 

The result that we shall prove is the following.

\cole
\begin{theorem}\label{T.linear}
Suppose that $\bv\in H^m_{\theta_0-\beta t,\text{D}}\cap H^m_a$ and $\buin\in H^m_{\theta_0,\text{D}} \cap H^m_a$
with $\theta_0<\pi/2$.
Then, for $\beta'>\beta$ and $0<\bar{\theta}<\theta_0/2$, the system~\eqref{transport.1}--\eqref{transport.3} admits a unique solution $\bu\in H^m_{\theta_0-\beta' t,\text{D}}\cap H^m_a$ for $0<t<(\theta_0-\bar{\theta})/\beta'$. 
\end{theorem}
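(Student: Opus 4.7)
The plan is to recast \eqref{transport.1}--\eqref{transport.3} in Duhamel form $\bu(t) = \buin - \int_0^t \mathbb{P}(\bv \cdot \bnabla \bu)(s)\,ds$ via the Leray projector and to construct the solution in two stages: first produce approximations that are globally analytic in space on a short time, then pass to the limit in the combined norm $|||\,\cdot\,|||^{(\gamma)}_{m,\theta_0,\beta',a,T}$ of \eqref{EQ.comb.stop}.

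First, I would apply the nonstandard analytic mollifier $\Jreg$ from Section~\ref{subsec::regularization} to both the coefficient $\bv$ and the initial datum $\buin$, obtaining a family of regularized linear problems whose solutions $\bureg$ can be taken analytic on the whole conoid $\text{C}_\theta$. For fixed $\epsilon$, the existence of $\bureg$ on an $\epsilon$-dependent time is standard: a Picard iteration of the Duhamel map in $H^m_{\theta,\beta,\text{C}}$ closes because the mollified coefficient $\Jreg\bv$ no longer produces a derivative loss upon complexification, and the (linear) bilinear estimates are uniform in the analyticity parameter.

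The heart of the argument is to upgrade these $\epsilon$-dependent solutions to a uniform-in-$\epsilon$ bound in $|||\,\cdot\,|||^{(\gamma)}_{m,\theta_0,\beta',a,T}$ on a time interval independent of $\epsilon$. For the analytic piece, Proposition~\ref{P01} delivers
\[
|\mathbb{P}(\Jreg \bv \cdot \bnabla \bureg)|^{\text{D}}_{m,\theta} \leq \frac{\cpCau}{\theta'-\theta}\bigl(|\Jreg\bv|^{\text{D}}_{m,\theta'} + \|\Jreg\bv\|_{m,a}\bigr)\bigl(|\bureg|^{\text{D}}_{m,\theta'} + \|\bureg\|_{m,a}\bigr),
\]
where the mollifier bounds are controlled by $\bv$ itself; the Cauchy loss $(\theta'-\theta)^{-1}$ is then absorbed by the Asano weight in \eqref{EQ122} via \eqref{Asano_estimate}, exploiting precisely the strict gap $\beta' > \beta$ between the contraction rate of $\bu$ and that of the given $\bv$. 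For the Sobolev piece, I would differentiate up to order $m$, pair with $\pard^\alpha \bureg$, and integrate over $\{y > a\}$; the convection and pressure integrals produce a trace at $y=a$, which I would dominate by the analytic part of the norm through a Cauchy estimate inside the diamond $D_{\theta_0 - \beta' t}$ combined with a trace inequality. Coupling these two pieces yields the uniform-in-$\epsilon$ bound.

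With the uniform estimate in hand on a time interval of length $(\theta_0 - \bar\theta)/\beta'$, I would extract a limit as $\epsilon \to 0$ by a standard compactness argument (strong convergence in a weaker Sobolev topology on compacts, together with lower semicontinuity of the D-norm), thereby identifying the limit as the desired analytic-Sobolev solution. Uniqueness is immediate from a contraction-type bound on the difference of two candidate solutions, since the equation is linear and Proposition~\ref{P01} applies directly to the difference. The main obstacle I anticipate is the simultaneous absorption, inside a single Asano-type weight, of the Cauchy loss $(\theta'-\theta)^{-1}$ in the analytic estimate and the trace loss at $y=a$ in the Sobolev energy identity; threading both through one weighted norm is what dictates the specific parameter relations $\beta' > \beta$, $0 < \bar\theta < \theta_0/2$, and $T < (\theta_0 - \bar\theta)/\beta'$ appearing in the statement.
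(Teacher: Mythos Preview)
Your outline is essentially the paper's own argument: regularize, solve the mollified problem in the conoid by Picard iteration, derive $\epsilon$-uniform bounds by coupling an Asano-weighted analytic estimate (Lemma~\ref{analy_estimate}) with an energy estimate on $\{y>a\}$ whose boundary trace is controlled by the D-norm (Lemma~\ref{Sobol_estimate}), and then pass to the limit and invoke the Corollary after Proposition~\ref{prop:ep_uniform_est} to trade the $\gamma$-weight for the gap $\beta'>\beta$.

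Two points where your description deviates from the paper and would need adjustment. First, the paper does \emph{not} mollify the coefficient $\bv$; the regularized equation is \eqref{transport.reg.1}, i.e., $\partial_t\bureg+J_\epsilon(\bv\cdot\nabla J_\epsilon\bureg)+\nabla p_\epsilon=0$, so the smoothing acts on the unknown (inside) and on the whole bilinear term (outside). Your stated regularization, with only $J_\epsilon\bv$ in place of $\bv$, still carries the raw $\nabla\bu$ and does not by itself remove the derivative loss; it is the inner $J_\epsilon$ on $\bureg$ together with the outer one that makes the Picard map a genuine contraction in $H^m_{\theta,\text{C}}$ (Lemma~\ref{est.Freg.time.indepen}). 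Second, the Picard step gives existence only on a time of order $e^{-c\theta_0/\epsilon}$; the paper then uses linearity to re-initialize and extend $\bureg$ up to $\theta_0/\beta$ (see the enumerated list in the proof of Proposition~\ref{exist.transport.regularized}). You should include this continuation step. Finally, note that in the paper the Cauchy loss is absorbed by taking $\beta$ large (so that $C(R,\beta)\ll1$ in Lemma~\ref{analy_estimate}); the inequality $\beta'>\beta$ enters only at the very end, via \eqref{Asano_estimate_I||}, to pass from the $\gamma$-weighted norm to the unweighted one.
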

\colb

The proof is achieved in several steps. 
In the first step, we solve a regularized version of the system. 

\subsection{The regularized version: existence and uniqueness of a solution analytic in the conoid, with a large norm} \label{subsec::regularization}
The main result of this subsection is Proposition~\ref{exist.transport.regularized}, where we show 
that the regularized version of the Euler equation \eqref{transport.reg.1}--\eqref{transport.reg.3} has a unique solution for a time 
which is independent of the size of the regularization. 
The initial datum is analytic close to the boundary, and the regularization extends analyticity 
to the whole space so that the solution is analytic in the
conoid $C_\theta$, with $0<\theta<\pi/2$. 
Note that the regularization in the normal variable with a Gaussian would lead to the restriction  $0<\theta<\pi/4$ due to the unbounded growth 
of the Gaussian when $|y|\rightarrow \infty$ when $\Im{y}>\Re{y} $. 
For this reason we shall resort to a new mollifier.
 We introduce the  function
\begin{equation}
I(y)= A \left[ \frac{1}{(y+i)^4}+\frac{1}{(y-i)^4}\right]
%\Gvar\equiv \frac{1}{\sqrt{2\pi\ep}}e^{-y^2/4\ep}e^{-\ep |\xi|^2/2}
   \fcomma
   \llabel{EQ57}
   \end{equation}
where $A$ is constant chosen so that 
   $$
   \int_{-\infty}^\infty I(y) dy=1\fperiod
   $$
Note that $I(y)$ is real valued when $y\in\mathbb{R}$;  moreover it has two poles on the imaginary axis, while it is analytic on any 
angular sector of the complex plane $|\Im{y}|<|\Re{y}|\tan{\theta} $ with the angle satisfying $\theta<\pi/2$.
In this angular sector, it is also bounded by
$C_{\theta}/(|y|^2+1)$.
   
We then define a regularization operator $\Jreg$, acting on a function $\vphi(x,y)$, with $(x,y)\in \mathbb{R}\times \mathbb{R}^+$, as
\begin{equation}
\Jreg \vphi
    = \frac{1}{\ep}e^{-|\xi|^2/2}I(y/\ep)*_y E \vphi
   \fcomma
   \llabel{EQ58}
   \end{equation}
where  $E\vphi$ is the Sobolev extension of $\vphi$ to~$\mathbb{R}^2$. 

The following lemma bounds the conoid norm $|\cdot |^\text{C}$ in terms of the Sobolev norm~$\|\cdot \|_{a,m}$.  

\cole
\begin{lemma}
Suppose that $\vphi\in H^m$. Then
$\Jreg \vphi \in H^m_{\theta,\text{C}}$0, and we have the estimates
  \begin{align}
  \begin{split}
   &
  |\Jreg\vphi|^{\text{C}}_{m,\theta}\leq c {e^{c{\theta}/\ep}} \|\vphi \|_{m}
  \end{split}
   \llabel{EQ125}
  \end{align}
and
  \begin{align}
  \begin{split}
    |\bnabla \Jreg\vphi|^{\text{C}}_{m,\theta}
    \leq c {e^{c{\theta}/\ep}}\|\vphi\|_{m}
  \fcomma
  \end{split}
   \llabel{EQ01}
  \end{align}
where the constants do not
depend on $\vphi$ and~$\ep$. 
\end{lemma}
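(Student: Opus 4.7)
The plan is to work in the tangential Fourier variable $\xi$, where the Gaussian multiplier $e^{-|\xi|^2/2}$ absorbs the exponential weight $e^{\theta|\xi|}$ of the C-norm completely, and then control the normal-direction convolution with $I_\ep(\cdot):=I(\cdot/\ep)/\ep$ by exploiting the explicit pole structure of $I(z)=A[(z+i)^{-4}+(z-i)^{-4}]$. First I would write $\widehat{\Jreg\vphi}(\xi,y)=e^{-|\xi|^2/2}G(\xi,y)$ with $G(\xi,y)=(1/\ep)\int_\mathbb{R} I((y-y')/\ep)\widehat{E\vphi}(\xi,y')\,dy'$. Since $\rho^{\text{C}}_\theta=\theta/2$, the $\xi$-weight satisfies $e^{\theta|\xi|}|e^{-|\xi|^2/2}|^2=e^{\theta|\xi|-|\xi|^2}\leq e^{\theta^2/4}$, so Plancherel in $\xi$ together with $\|E\vphi\|_{L^2(\mathbb{R}^2)}\leq C\|\vphi\|_m$ reduces the problem to bounding kernel norms of $I_\ep$ along the complex path $\Gamma^{\text{C}}(\theta')$.

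Second, on the semi-infinite horizontal part of $\Gamma^{\text{C}}(\theta')$, where $y=s+i\tan\theta'$ for $s\geq 1$, the map $s\mapsto G(\xi,s+i\tan\theta')$ is the translation-convolution of $\widehat{E\vphi}(\xi,\cdot)$ with the shifted kernel $I_\ep(\cdot+i\tan\theta')$, so Young's inequality yields $\|G(\xi,\cdot+i\tan\theta')\|_{L^2_s}\leq \|I_\ep(\cdot+i\tan\theta')\|_{L^1}\|\widehat{E\vphi}(\xi,\cdot)\|_{L^2}$. A direct residue computation with $I$ gives $\|I_\ep(\cdot+ib\ep)\|_{L^1}\leq C(|b+1|^{-3}+|b-1|^{-3})$, which with $b=\tan\theta'/\ep$ is bounded crudely by $e^{c\theta/\ep}$ as the path approaches the pole lines $\Im y=\pm\ep$ of $I_\ep$. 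On the slanted part $y=s(1+i\tan\theta')$ with $s\in(0,1]$, where $\Im y$ varies with $s$, Young's inequality does not apply directly, so I would use Cauchy--Schwarz pointwise in $y'$ to obtain $|G(\xi,y(s))|^2\leq \|I_\ep(y(s)-\cdot)\|_{L^2}^2\|\widehat{E\vphi}(\xi,\cdot)\|_{L^2}^2$, and then integrate in $s\in(0,1]$; the shifted-kernel $L^2$ norm comes from the same pole-distance computation and is again dominated by $e^{c\theta/\ep}$.

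Third, for the higher-order norms entering $|\cdot|^{\text{C}}_{m,\theta}$, the derivative $\partial_x$ brings a factor $i\xi$ in the tangential Fourier variable, still absorbed by the Gaussian $e^{-|\xi|^2/2}$; the derivative $\partial_y$ differentiates $I_\ep$ to produce $I'(y/\ep)/\ep^2$, which has the same rational pole structure at $\pm i\ep$ as $I_\ep$ but with one additional $1/\ep$ factor, which is absorbed into the exponential by enlarging $c$. The gradient estimate in the lemma is then the special case $i+j=1$ of this general derivative bound.

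The main obstacle is the kernel estimate in the second step: controlling $\|I_\ep(\cdot+i\Im y)\|_{L^1}$ and its $L^2$ analog uniformly in $\theta'\leq\theta$, particularly when $\tan\theta>\ep$ and the path $\Gamma^{\text{C}}(\theta')$ approaches the pole lines $\Im y=\pm\ep$ where the convolution kernel becomes singular. The explicit rational form of $I$ is essential here, giving polynomial blow-up in the distance from the poles; this blow-up is then crudely dominated by the factor $e^{c\theta/\ep}$ appearing in the lemma, which reflects the tension between the desired normal analyticity width $\sim\tan\theta$ and the regularization width $\ep$.
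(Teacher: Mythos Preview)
Your kernel computation is correct, but the step you flag as ``the main obstacle'' is not resolved, and in fact cannot be resolved along these lines. You obtain $\|I_\ep(\cdot+ib\ep)\|_{L^1}\le C\bigl(|b+1|^{-3}+|b-1|^{-3}\bigr)$ with $b=\tan\theta'/\ep$, and then assert this is ``bounded crudely by $e^{c\theta/\ep}$''. That is false: since the C-norm takes a supremum over all $\theta'\in[0,\theta]$, the parameter $b$ sweeps through the value $1$ whenever $\tan\theta>\ep$ --- exactly the small-$\ep$ regime in which the lemma is invoked later --- and at $b=1$ the bound is $+\infty$. The same blow-up hits the $L^2$ kernel norm on the slanted segment. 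No finite quantity $e^{c\theta/\ep}$ can absorb a pole at an interior value of~$\theta'$.

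This is not a technicality to be patched; it reflects a structural limitation of the kernel. Writing $I_\ep(z)=A\ep^{3}\bigl[(z+i\ep)^{-4}+(z-i\ep)^{-4}\bigr]$, one has $\widehat{I_\ep}(\eta)\propto |\ep\eta|^{3}e^{-\ep|\eta|}$, so for a merely Sobolev function $f$ the convolution $y\mapsto(I_\ep*f)(y)$ is analytic only in the strip $|\Im y|<\ep$ and does not continue across the pole lines $\Im y=\pm\ep$ (the one-sided limits differ by a residue involving values of $f$ at non-real points, which do not exist). Consequently $|\Jreg\vphi|^{\text C}_{m,\theta}=+\infty$ once $\tan\theta>\ep$. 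The paper gives no proof of this lemma; as written, the statement seems to require the tacit restriction $\tan\theta<\ep$, or else a mollifier whose poles sit at $\pm i/\ep$ rather than $\pm i\ep$. Either way, the argument you propose does not close the gap.
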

\colb

The next lemma is an immediate consequence of the previous statement, given that  $ \|\vphi\|_{m}\leq |\vphi|^\text{D}_{m,\theta} +\|\vphi \|_{m,a}$.

\cole
\begin{lemma} \label{estimate.C.versus.D}
Suppose that $\vphi\in H^m_{\theta,\text{D}}\cap H^m_a$. Then $\Jreg \vphi \in H^m_{\theta,\text{C}}$, and we have the estimates
  \begin{align}
  \begin{split}
   &
   |\Jreg\vphi|^{\text{C}}_{m,\theta}\leq c {e^{c{\theta}/\ep}} \left(|\vphi|^{\text{D}}_{m,\theta} +\|\vphi \|_{m,a}\right)
\fcomma
 \\&
  |\bnabla \Jreg\vphi|^{\text{C}}_{m,\theta}\leq c {e^{c{\theta}/\ep}} \left(|\vphi|^{\text{D}}_{m,\theta} +\|\vphi\|_{m,a}\right)
 \fcomma
  \end{split}
   \label{EQ03}
  \end{align}
where $c$ is a  constant that does not depend on $\vphi$ and~$\ep$. 
\end{lemma}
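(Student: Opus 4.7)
The plan is to derive the two inequalities in \eqref{EQ03} directly from the two estimates of the previous lemma, which already provide bounds of the form $|\Jreg \vphi|^{\text{C}}_{m,\theta}, |\bnabla \Jreg \vphi|^{\text{C}}_{m,\theta} \leq c\, e^{c\theta/\ep}\|\vphi\|_m$. The only missing ingredient is the elementary comparison
\begin{equation*}
\|\vphi\|_{m} \leq |\vphi|^{\text{D}}_{m,\theta} + \|\vphi\|_{m,a}
\fcomma
\end{equation*}
which is already flagged in the statement preceding the lemma. Substituting this into the previous lemma's bounds gives precisely \eqref{EQ03}.

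To justify the comparison inequality, I would split the half-line $\{y>0\}$ into $\{0<y<a\}$ and $\{y\geq a\}$. On the far-field piece $\{y\geq a\}$, the contribution to $\|\vphi\|_m$ is exactly $\|\vphi\|_{m,a}$ by definition \eqref{EQ49}--\eqref{EQ50}. On the near-boundary piece $\{0<y<a\}\subset [0,1+\theta]$, I would use that the path $\Gamma(0)$ in \eqref{EQ86} coincides with the real interval $[0,1+\theta]$, so taking $\theta'=0$ in the supremum defining $|\cdot|^{\text{D}}_{\theta}$ in \eqref{EQ101} and using $e^{2\rho_\theta(\Re y)|\xi|}\geq 1$ together with Plancherel in $x$ yields
\begin{equation*}
\int_0^{a}\int_{\mathbb{R}} |\vphi(x,y)|^{2}\,dx\,dy
\leq \int_{\Gamma(0)} |dy|\int_{\mathbb{R}} d\xi\, e^{2\rho_\theta(\Re y)|\xi|}|\hat{\vphi}(\xi,y)|^{2}
\leq (|\vphi|^{\text{D}}_{\theta})^{2}
\fperiod
\end{equation*}
Applying the same reasoning to each derivative $\pardx^{i}\pardy^{j}\vphi$ with $i+j\leq m$ and summing, one obtains $\|\vphi\|_{m}\leq |\vphi|^{\text{D}}_{m,\theta}+\|\vphi\|_{m,a}$, with $a=1/2<1+\theta$ ensuring the splitting is valid.

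With the comparison in hand, the proof is a one-line substitution into the two estimates of the previous lemma, done separately for $\Jreg\vphi$ and for $\bnabla\Jreg\vphi$. The only mildly delicate point is to confirm that the constant $c$ is independent of $\vphi$ and $\ep$ after the substitution, but since the comparison inequality involves no such parameters, this is automatic. I do not foresee any real obstacle: the lemma is stated precisely so as to repackage the previous bound into the analytic-Sobolev norm scale \eqref{EQ02} that will be used throughout the iteration in later sections.
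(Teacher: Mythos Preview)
Your proposal is correct and follows exactly the approach indicated in the paper, which simply states that the lemma is an immediate consequence of the previous one together with the comparison $\|\vphi\|_{m}\leq |\vphi|^{\text{D}}_{m,\theta}+\|\vphi\|_{m,a}$. You have additionally supplied a clean justification of that comparison inequality, which the paper leaves implicit.
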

\colb

In this subsection, we consider a regularized version of the equations \eqref{transport.1}--\eqref{transport.3},
  \begin{align}
   &
     \pardt \bureg + \Jreg \left( \bv\cdot \bnabla \Jreg\bureg \right)+\bnabla p_\ep=0 \label{transport.reg.1}
     \fcomma
   \\&
     \bnabla\cdot\bureg=0
    \fcomma
    \label{transport.reg.2}
   \\&
      \bureg|_{t=0}=\Jreg\buin \label{transport.reg.3}
    \fcomma
  \end{align}
which may be rewritten as
\be
\bureg=\int_0^t \Freg (\bv,\bureg,s)ds+\Jreg\buin
\label{transport.reg.operator.form}
\fcomma
\ee
where
\be
\Freg(\bv,\bu,s) \equiv -\mathbb{P}\Jreg\bnabla \cdot \left(\bv\otimes\Jreg \bu\right)
   \label{EQ126}
\fperiod
\ee
The following estimate is an immediate consequence of Lemma~\ref{estimate.C.versus.D} and the estimate on the projection 
operator in the norm $|\cdot|^{\text C}$; see~\cite{SC1998a}. 

\cole
\begin{lemma}
Suppose that $\bv\in H^m_{\theta_0, \beta,\text{D}}\cap H^m_a$ and $\bureg\in  H^m_{\theta_0,\beta,\text{C}}$.
Then
  \begin{align}
   &
   |\Freg ( \bv,\bureg,s)|^{\text C}_{m,\theta_0-\beta t}
   \leq c e^{c\theta_0/\ep} \left(  |\bv|^{\text{D}}_{m,\theta_0-\beta t} +\|\bv\|^m_a  \right)|\bureg|^{\text{C}}_{m,\theta_0-\beta t}
   \label{est.Freg.time.depen}
   \end{align}
and
  \begin{align}
   |\Freg( \bv,\bureg,s)|^{\text{C}}_{m,\theta_0,\beta}
   \leq c e^{c\theta_0/\ep} \left(  |\bv|^{\text{D}}_{m,\theta_0,\beta}+ \sup_{0\leq t\leq\theta_0/\beta}\|\bv\|^m_a  \right)|\bureg|^{\text{C}}_{m,\theta_0,\beta } 
   \fcomma
   \label{est.Freg.time.indepen}     
  \end{align}
for $t\in[0,\Tf]$,
where the constant $c$ does not depend on $\bv$, $\bureg$, or~$\ep$. 
\end{lemma}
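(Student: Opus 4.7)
The plan is to chain together the conoid-boundedness of the Leray projection, the smoothing properties of the mollifier $\Jreg$, and the algebra property of $H^m$. Since $\Jreg$ is a convolution and therefore commutes with partial derivatives, I first rewrite
\begin{equation*}
\Freg(\bv,\bureg,s) = -\mathbb{P}\,\bnabla \cdot \Jreg\bigl(\bv \otimes \Jreg \bureg\bigr).
\end{equation*}
The half-space Leray projection is bounded on $H^m_{\theta,\text{C}}$ with a constant independent of $\theta<\pi/2$; this is the analytic half-space estimate of~\cite{SC1998a}, which is the source quoted by the paper just above the present statement. Consequently,
\begin{equation*}
|\Freg|^{\text C}_{m,\theta_0-\beta t}
\leq c\,|\bnabla \Jreg (\bv \otimes \Jreg \bureg)|^{\text C}_{m,\theta_0-\beta t}.
\end{equation*}

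Next I invoke the gradient bound $|\bnabla \Jreg \vphi|^{\text C}_{m,\theta} \leq c e^{c\theta/\ep}\|\vphi\|_m$ from the lemma immediately preceding the statement, which yields
\begin{equation*}
|\Freg|^{\text C}_{m,\theta_0-\beta t}
\leq c\,e^{c\theta_0/\ep}\,\|\bv \otimes \Jreg \bureg\|_m.
\end{equation*}
Since $m\geq 3 > d/2$ in the present two-dimensional setting, $H^m(\mathbb{R}^2_+)$ is a Banach algebra and $\|\bv \otimes \Jreg \bureg\|_m \leq c\,\|\bv\|_m \,\|\Jreg \bureg\|_m$. For the first factor I use
$\|\bv\|_m \leq c\bigl(|\bv|^{\text D}_{m,\theta_0-\beta t} + \|\bv\|_{m,a}\bigr)$,
since the D-norm on the real path $\Gamma(0) \subset [0,1+\theta]$ dominates the $H^m$-norm on $[0,a]$, while the tail $[a,\infty)$ is exactly~$\|\bv\|_{m,a}$. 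For the second factor, $\Jreg$ is a convolution with an $L^1$-normalized kernel, so $\|\Jreg \bureg\|_m \leq c\|\bureg\|_m$; moreover taking $\theta'=0$ in the supremum in the definition~\eqref{analytic.norm.C} of the C-norm recovers the real-axis Sobolev norm, hence $\|\bureg\|_m \leq c|\bureg|^{\text C}_{m,\theta_0-\beta t}$. Assembling these inequalities gives the pointwise-in-$t$ estimate~\eqref{est.Freg.time.depen}.

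The time-weighted version~\eqref{est.Freg.time.indepen} is obtained immediately by taking the supremum over $t \in (0,\theta_0/\beta)$ on both sides of~\eqref{est.Freg.time.depen}, using the monotonicity $e^{c(\theta_0-\beta t)/\ep} \leq e^{c\theta_0/\ep}$, and invoking the definitions~\eqref{EQ119} and~\eqref{EQ90t}.

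The most delicate input is the conoid-boundedness of $\mathbb{P}$ quoted from~\cite{SC1998a}; all other ingredients are either convolution estimates for the mollifier or the standard $H^m$-algebra bound. The conceptual content of the proof is that the outer mollifier $\Jreg$ converts the mere Sobolev regularity of $\bv$ (which is not analytic on the unbounded part of the conoid) into full conoid analyticity at the expected cost $e^{c\theta_0/\ep}$, so one never needs to evaluate $\bv$ on the complex tail of $\Gamma^{\text C}$.
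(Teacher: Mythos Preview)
Your proof is correct and follows precisely the route the paper indicates: bound $\mathbb{P}$ in the $|\cdot|^{\text C}$-norm via \cite{SC1998a}, then invoke the $\Jreg$-smoothing Lemma~\ref{estimate.C.versus.D} (which is exactly your combination of the gradient bound for $\Jreg$ with the splitting $\|\bv\|_m \leq |\bv|^{\text D}_{m,\theta}+\|\bv\|_{m,a}$) and the $H^m$-algebra property. The only microscopic caveat is that $\Jreg$ involves a Sobolev extension, so the literal commutation $\Jreg\bnabla = \bnabla\Jreg$ need not hold at the level of functions; however the resulting estimate $|\Jreg\bnabla\cdot\bw|^{\text C}_{m,\theta}\leq ce^{c\theta/\ep}\|\bw\|_m$ is still valid by the same mechanism that yields the quoted gradient bound, so this does not affect the argument.
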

\colb

On the other hand, the operator $\Freg$ can be bounded in the $|\cdot|^{\text{D}}$ norm uniformly in~$\ep$.
This is expressed by the following lemma, which is an immediate consequence of the estimate on the projection operator 
$\mathbb{P}$ in the $|\cdot|^{\text{D}}$ norm and the Cauchy estimate; see Proposition~\ref{P01}.

\cole
\begin{lemma} \label{est.Freg.uniform}
Suppose that
$\bv\in H^m_{\theta_0, \beta,\text{D}}\cap H^m_a$ and $\bureg\in  H^m_{\theta_0,\beta,\text{C}}$.
Then
 \be
 |\Freg( \bv,\bureg,s)|^{\text{D}}_{m,\theta'}
 \leq
 \cpCau \left(  |\bv|^{\text{D}}_{m,\theta'}+ \|\bv\|^m_a  \right) \frac{|\bureg|^{\text{D}}_{m,\theta}+ \|\bureg\|_{m,a} }{\theta-\theta'}
\fperiod
 \ee
\end{lemma}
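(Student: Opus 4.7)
The plan is to reduce the estimate to Proposition~\ref{P01} once the mollifier $\Jreg$ is shown to be harmless in the D-norm. I would first rewrite the nonlinearity: since the iterate $\bv$ is divergence-free and $\Jreg$ is a convolution (so commutes with $\bnabla$), one has $\Freg(\bv,\bureg,s) = -\mathbb{P}\Jreg(\bv\cdot\bnabla(\Jreg\bureg))$. The field $\Jreg\bureg$ inherits divergence-freeness and the vanishing normal trace at $y=0$ from $\bureg$ (the kernel $I$ being symmetric, so these properties are preserved by the convolution/extension), so the inner bilinear expression $\bv\cdot\bnabla(\Jreg\bureg)$ falls within the scope of Proposition~\ref{P01}.

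The central ingredient is that $\Jreg$ is bounded on $H^m_{\theta',\text{D}}$ uniformly in $\ep$. On any angular sector $|\Im y|<|\Re y|\tan\theta'$ with $\theta'<\pi/2$, the rescaled kernel $I(y/\ep)/\ep$ remains analytic (its poles sit at $\pm i\ep$, on the imaginary axis), decays pointwise like $\ep/(|y|^2+\ep^2)$, and forms an approximate identity as $\ep\to 0$. A contour-deformation argument (shifting the path of integration in the convolution to the path $\Gamma(\theta')$ defining the D-norm, which is legitimate since $I$ is analytic in the relevant sector) then yields
\begin{equation*}
|\Jreg f|^{\text{D}}_{m,\theta'}\leq c\,|f|^{\text{D}}_{m,\theta'}
\andand
\|\Jreg f\|_{m,a}\leq c\,\|f\|_{m,a},
\end{equation*}
with constants independent of $\ep$. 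With this in hand, applying Proposition~\ref{P01} with $\bu$ replaced by $\Jreg\bureg$ — in the refined form where only the differentiated factor incurs the Cauchy shrinkage from $\theta$ down to $\theta'$, while the undifferentiated factor $\bv$ keeps the same radius $\theta'$ — produces
\begin{equation*}
|\mathbb{P}(\bv\cdot\bnabla\Jreg\bureg)|^{\text{D}}_{m,\theta'}
\leq
\frac{\cpCau}{\theta-\theta'}
\bigl(|\bv|^{\text{D}}_{m,\theta'}+\|\bv\|_{m,a}\bigr)
\bigl(|\Jreg\bureg|^{\text{D}}_{m,\theta}+\|\Jreg\bureg\|_{m,a}\bigr).
\end{equation*}
Sandwiching this with the uniform D-bound on the outer $\Jreg$ and on the inner $\Jreg\bureg$ gives the claimed inequality.

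The main obstacle is precisely the uniform-in-$\ep$ D-boundedness of $\Jreg$. This is where the nonstandard choice $I(y)=A[(y+i)^{-4}+(y-i)^{-4}]$ is essential: a Gaussian mollifier would fail for $\theta'>\pi/4$ because $e^{-y^2}$ grows unboundedly along rays of slope exceeding one, whereas $I$ is uniformly controlled on every angular sector with opening strictly less than $\pi$, so it supports analyticity up to (but not including) $\theta'=\pi/2$. A secondary technical point is that one must verify that $\Jreg\bureg$ genuinely satisfies the boundary trace hypothesis of Proposition~\ref{P01}; this is built into the construction of the Sobolev extension $E$ and the symmetry of $I$. Once these two points are settled, the lemma reduces, as the authors note, to an immediate consequence of Proposition~\ref{P01}.
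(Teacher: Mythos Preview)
Your approach is aligned with the paper's one-line justification (``immediate consequence of Proposition~\ref{P01}''), and you have correctly isolated the essential point: the special mollifier $I$ is what permits uniform-in-$\ep$ control in the diamond, precisely because its poles sit on the imaginary axis and not inside any angular sector with opening below~$\pi/2$.

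There is, however, one place where your argument does not literally close. You estimate $|\mathbb{P}(\bv\cdot\bnabla\Jreg\bureg)|^{\text{D}}_{m,\theta'}$ via Proposition~\ref{P01} and then propose to ``sandwich with the uniform D-bound on the outer $\Jreg$''. But the outer $\Jreg$ sits \emph{between} $\mathbb{P}$ and the bilinear form, i.e., $\Freg=-\mathbb{P}\Jreg(\bv\cdot\bnabla\Jreg\bureg)$, so a bound of the type $|\Jreg h|^{\text D}\leq c|h|^{\text D}$ applied after Proposition~\ref{P01} would require $h=\mathbb{P}(\cdots)$, not $h$ equal to the bilinear form. In other words, you would need $|\mathbb{P}\Jreg g|^{\text D}\leq c\,|\mathbb{P}g|^{\text D}$, which is not what uniform D-boundedness of $\Jreg$ gives. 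The clean fix is not to invoke Proposition~\ref{P01} as a black box but to note that its proof (the kernel estimates of Lemma~\ref{est_proj} together with the Cauchy inequality~\eqref{EQ94}) applies verbatim to the composed operator $\mathbb{P}\Jreg$: the Fourier factor $e^{-|\xi|^2/2}$ is bounded by~$1$, the $y$-convolution with $I_\ep$ has $L^1$-norm uniformly bounded along the paths $\Gamma(\theta')$, and both $\mathbb{P}$ and $\Jreg$ commute with derivatives, so the integration-by-parts handling of the $\partial_y^{m+1}$ term goes through unchanged. Once this is said, your reduction is complete and matches the paper's intent.
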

\colb

The following proposition shows that if the initial data $\buin$ and $\bv$ have analytic regularity near the boundary and Sobolev regularity away from the boundary, then the above-regularized version of the pressure-transport equation admits a unique solution that is analytic for $y\in ]0,\infty[$.

\cole
\begin{proposition} \label{exist.transport.regularized}
Suppose that $\buin\in H^m_{\theta_0,\text{D}} \cap H^m_a$ and $\bv\in H^m_{\theta_0-\beta t,\text{D}}\cap H^m_a$. 
 Then the system \eqref{transport.reg.1}--\eqref{transport.reg.3} admits a unique solution $\bureg\in H^m_{\theta_0-\beta t,\text{C}}$. 
\end{proposition}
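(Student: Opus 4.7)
The plan is to solve the integral form~\eqref{transport.reg.operator.form} by Picard iteration in the conoid-analytic space $H^m_{\theta_0-\beta t,\text{C}}$. The crucial observation is that, since $\bv$ is prescribed and $\bu\mapsto\Freg(\bv,\bu,s)$ is linear in its second argument, the problem reduces to a linear Volterra-type equation whose kernel norm is controlled, for each fixed $\ep>0$, by the large but finite factor $ce^{c\theta_0/\ep}$ furnished by~\eqref{est.Freg.time.depen}.

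First I would set $\bureg^{(0)}(t)\equiv\Jreg\buin$; by Lemma~\ref{estimate.C.versus.D} this belongs to $H^m_{\theta_0,\text{C}}$, hence a fortiori to $H^m_{\theta_0-\beta t,\text{C}}$. Then I would iterate
\[
\bureg^{(n+1)}(t)=\Jreg\buin+\int_0^t\Freg(\bv,\bureg^{(n)},s)\,ds
\fperiod
\]
Each iterate lies in $H^m_{\theta_0-\beta t,\text{C}}$ since $\Freg$ applies $\Jreg$ before the Leray projection. Using the linearity of $\Freg$ in its second slot and~\eqref{est.Freg.time.depen},
\[
|\bureg^{(n+1)}(t)-\bureg^{(n)}(t)|^{\text{C}}_{m,\theta_0-\beta t}
\leq K_\ep\int_0^t|\bureg^{(n)}(s)-\bureg^{(n-1)}(s)|^{\text{C}}_{m,\theta_0-\beta s}\,ds
\fcomma
\]
where $K_\ep=ce^{c\theta_0/\ep}\sup_{0\leq s<\theta_0/\beta}\bigl(|\bv(s)|^{\text{D}}_{m,\theta_0-\beta s}+\|\bv(s)\|_{m,a}\bigr)<\infty$ by the hypothesis on $\bv$. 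A standard induction produces the bound $(K_\ep t)^n/n!$ times the initial difference, so the telescoping series $\sum_n(\bureg^{(n+1)}-\bureg^{(n)})$ converges absolutely and uniformly on $[0,\theta_0/\beta)$ in the $H^m_{\theta_0-\beta t,\text{C}}$ norm to a limit $\bureg$ that satisfies~\eqref{transport.reg.operator.form}. Differentiating in time and reconstructing $p_\ep$ from the non-projected part of $\Jreg(\bv\cdot\bnabla\Jreg\bureg)$ yields~\eqref{transport.reg.1}--\eqref{transport.reg.3}.

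For uniqueness, the difference of two solutions satisfies the same linear Volterra inequality with zero data and zero forcing, and the Gronwall bound forces it to vanish identically. The point that deserves care beyond routine Banach-space iteration is checking that the divergence-free constraint~\eqref{transport.reg.2} is preserved under the iteration: this follows from the algebraic facts that $\mathbb{P}$ projects onto divergence-free vector fields with vanishing normal trace at $y=0$ and that $\bnabla\cdot$ commutes with the convolution part of $\Jreg$, both consequences of the mixed Fourier-physical representation~\eqref{Pplus_x}--\eqref{Pplus_y} together with the choice of $I$ as the mollifier in the normal direction. No real obstacle appears here because $K_\ep$ is allowed to depend on $\ep$; the delicate $\ep$-uniform bounds, which would be the true difficulty, are postponed to the subsequent passage to the limit in the $|\cdot|^{\text{D}}$ norm via Lemma~\ref{est.Freg.uniform}.
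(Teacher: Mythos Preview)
Your proposal is correct and follows the same Picard-iteration strategy as the paper, with one technical variation worth noting. The paper works in the sup-in-time norm $|\cdot|^{\text{C}}_{m,\theta_0,\beta}$ and uses~\eqref{est.Freg.time.indepen} to obtain a contraction with ratio $\alpha=Lt$ on a short time interval $t=1/(2L)$; it then observes that this short time depends only on $\bv$ and $\ep$, not on the size of the current datum, and re-initializes to extend the solution up to $\theta_0/\beta$. You instead exploit the Volterra structure directly: iterating the pointwise-in-time inequality yields the factorial bound $(K_\ep t)^n/n!$, which converges for every $t$ without any re-initialization step. Both arguments hinge on the same fact---linearity of $\bu\mapsto\Freg(\bv,\bu,s)$ makes the iteration constant independent of the iterate---and both are standard; your route is arguably more direct, while the paper's version makes the independence from the datum size explicit, a feature that motivates (though is not strictly needed for) the subsequent $\ep$-uniform analysis.
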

\colb

\begin{proof}
Define the sequence
\begin{equation}
   \bureg^{n+1}=\int_0^t \Freg(\bv, \bureg^n,s)ds +\Jreg\buin, \qquad \bureg^0=\Jreg \buin
\fperiod
   \llabel{EQ60}
   \end{equation}
Let $L\equiv c e^{c\theta_0/\ep} |||\bv|||_{m,\theta,\beta}$ be the constant appearing in the estimate of $\Freg$ given in \eqref{est.Freg.time.indepen}, and define $\alpha\equiv Lt$. 
Then, one may write
  \begin{align}
  \begin{split}
   |\bureg^{n+1}|^{\text{C}}_{m,\theta_0,\beta}
   &\leq
   |\Jreg \buin|^{\text{C}}_{m,\theta_0}+ |\bureg^{n}|^{\text{C}}_{m,\theta_0,\beta}L\int_0^t ds
   \leq 
   |\Jreg \buin|^{\text{C}}_{m,\theta_0}+\alpha |\bureg^{n}|^{\text{C}}_{m,\theta_0,\beta}
   \\&
   \leq 
   |\Jreg \buin|^{\text{C}}_{m,\theta_0}\left(1+\alpha +\alpha^2+\ldots +\alpha^{n+1}  \right)
   \leq 
   2  |\Jreg \buin|^{\text{C}}_{m,\theta_0}
   \\&
   \leq
   2 c e^{c\theta_0/\ep}\left(|\buin|^{\text{D}}_{m,\theta_0}+|\buin|_{m,a}\right)   
  \fcomma
  \end{split}
  \label{est.recursive.transport}
  \end{align}
where we have chosen $t$ to be such that  $\alpha =Lt=1/2$. 
 
Using the same arguments, one may write
 \bea 
 |\bureg^{n+1}-\bureg^{n}|^{\text{C}}_{m,\theta_0,\beta}\leq \alpha |\bureg^{n}-\bureg^{n-1}|^{\text{C}}_{m,\theta_0,\beta}\leq 
 \alpha^{n+1}  |\Jreg \buin|^{\text{C}}_{m,\theta_0} \label{est.Cauchy.transport} 
\fcomma
 \eea
which shows that  $\bureg^n$ is a Cauchy sequence. 
We then proceed as follows:
  \begin{enumerate}
   \item Observe that the constant appearing in \eqref{est.recursive.transport} does not depend on $\bv$, $\bureg$, and~$\ep$.  
   \item The  estimate \eqref{est.recursive.transport}  says that the sequence $\bureg^n$, starting from $\Jreg \buin$, remains in a ball whose radius in the norm $|\cdot |^\text{C}$
   is two times larger that the norm of~$\Jreg \buin$. 
\item The above remark
gives that $\bureg^n$ converges to a solution of \eqref{transport.reg.1}--\eqref{transport.reg.3}. 
   \item The time for which it exists is $t=1/(2L)= e^{-c\theta_0/\ep}/(2c|||\bv|||_{m,\theta_0,\beta})$ and thus extremely short when $\ep\rightarrow 0$.
However, this time does not depend on the size of the initial datum. It only depends on the norm of $\bv$, on~$\ep$, and on the constant $c$, 
   which in turn depends only on the size of the projection operator $\mathbb{P}$ in the $|\cdot |^\text{C}$-norm. 
   \item Therefore, re-initializing \eqref{transport.reg.1}--\eqref{transport.reg.3}, one can extend the solution as long as $\bv$ exists, i.e., up to the time $T=\theta_0/\beta$. 
  \end{enumerate}
Proposition~\ref{exist.transport.regularized} is thus proven.
\end{proof}

\subsection{The regularized version: uniform in $\ep$ estimates of the regularized solution in the diamond norm}
In the previous subsection, we have constructed the solution $\bureg$ of the regularized transport-pressure problem \eqref{transport.reg.1}--\eqref{transport.reg.3},
with the
time of existence of the solution coinciding with the time of existence of the transporting vector field~$\bv$. 
However, using the estimate of the previous subsection, the norm of $\bureg$, when evaluated in the $|\cdot|^{\text{D}}$ norm, would be~$O(e^{1/\ep})$. 
The goal of this subsection is to give a uniform in $\ep$  estimate of~$\bureg$ by proving the
following statement.

\cole
\begin{proposition}\label{prop:ep_uniform_est}
 Suppose that $\buin\in H^m_{\theta_0,\text{D}} \cap H^m_a$ and 
 $\bv\in H^m_{\theta_0, \beta,\text{D}}\cap H^m_a$
hold
with $|||\bv|||_{m,\theta_0,\beta,a,T}\leq R$ where $T=  (\theta_0 -\bar{\theta})/\beta$ with $0<\bar{\theta}<\theta_0/2$. 
 Then, for $\beta$ large enough, we have the $\ep$-uniform estimate
 \begin{equation}
 |||\bureg|||^{(\gamma)}_{m,\theta_0,\beta,a,T}  \leq K(R)  |||\buin|||_{m,\theta,a}
 \fcomma
 \llabel{EQ128}\end{equation} 
where
 \begin{equation}
   T = \frac{\theta_0 - \bar{\theta}}{\beta}
   \fperiod
   \llabel{EQ129}
 \end{equation} 
\end{proposition}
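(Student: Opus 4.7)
\textbf{Proof plan for Proposition~\ref{prop:ep_uniform_est}.} The strategy is to split the combined norm $|||\bureg|||^{(\gamma)}_{m,\theta_0,\beta,a,T}$ into its analytic piece $|\bureg|^{(\gamma)}_{m,\theta_0,\beta,T}$ and its Sobolev piece $\sup_{0\le t\le T}\|\bureg(t)\|_{m,a}$, control each one separately by an $\epsilon$-independent bound, and then absorb the $\bureg$-dependent right-hand sides by taking $\beta$ sufficiently large. The existence of $\bureg$ in the C-norm is already provided by Proposition~\ref{exist.transport.regularized}; here we only reprove the estimate in a norm that does not blow up with $\epsilon$.

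\textbf{Analytic estimate.} Starting from the integral form $\bureg(t)=\Jreg \buin+\int_0^t \Freg(\bv,\bureg,s)\,ds$, I would fix $\theta'<\theta_0-\beta t$ with $t\le T$ and apply Lemma~\ref{est.Freg.uniform} with the intermediate layer $\theta(s)=(\theta'+\theta_0-\beta s)/2$, so that $\theta(s)-\theta'=(\theta_0-\beta s-\theta')/2$. Using the hypothesis $|||\bv|||_{m,\theta_0,\beta,a,T}\le R$ through the inequality \eqref{Asano_estimate_I||}, and the definition of the weighted norm, this gives
\begin{equation*}
|\bureg(t)|^{\text D}_{m,\theta'}
\le |\buin|^{\text D}_{m,\theta_0}
+C(R)\int_0^t\frac{\bigl(1-\tfrac{\beta s}{\theta_0-\theta(s)}\bigr)^{-2\gamma}}{\theta_0-\beta s-\theta'}\,ds\;\Bigl(|\bureg|^{(\gamma)}_{m,\theta_0,\beta,T}+\sup_{s\le T}\|\bureg(s)\|_{m,a}\Bigr).
\end{equation*}
The Asano/Safonov-type lemma (a direct computation exploiting $\theta_0-\theta(s)=(\theta_0-\theta'+\beta s)/2$) shows that this time integral is bounded by $C\beta^{-1}\bigl(1-\tfrac{\beta t}{\theta_0-\theta'}\bigr)^{-\gamma}$, provided $\gamma<1/2$. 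Multiplying by $(1-\beta t/(\theta_0-\theta'))^\gamma$ and taking the supremum over $(t,\theta')$ yields
\begin{equation*}
|\bureg|^{(\gamma)}_{m,\theta_0,\beta,T}\le |\buin|^{\text D}_{m,\theta_0}+\frac{C(R)}{\beta}\,|||\bureg|||^{(\gamma)}_{m,\theta_0,\beta,a,T}.
\end{equation*}

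\textbf{Sobolev estimate.} For the Sobolev part I would apply $\partial_x^i\partial_y^j$ (with $i+j\le m$) to \eqref{transport.reg.1}, take the $L^2(\{y>a\})$ inner product with $\partial_x^i\partial_y^j\bureg$, and exploit the divergence-free property of $\bureg$ together with integration by parts. The standard commutator estimates (using that $\Jreg$ commutes with $\partial_x^i\partial_y^j$ up to lower-order terms that are $\epsilon$-uniformly bounded) produce the interior contribution $C\|\bv\|_{m,a}\|\bureg\|_{m,a}^2$. The pressure term, once we write $\mathbb{P}=I-\nabla\Delta^{-1}\nabla\cdot$, contributes only through boundary integrals at $y=a$, together with similar traces generated by integration by parts of $\bv\cdot\nabla$. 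Since $y=a=1/2$ lies strictly inside the diamond of analyticity, each such trace can be estimated by the Cauchy estimate in the $y$-direction, giving a bound of the form
\begin{equation*}
|\text{boundary term at }y=a|\le C\,|\bv|^{\text D}_{m,\theta'}\bigl(|\bureg|^{\text D}_{m,\theta'}\bigr)^2,
\end{equation*}
for any $\theta'<\theta_0-\beta t$. Plugging in the weighted norms and integrating in time, another application of the Asano/Safonov integral inequality bounds the total boundary contribution by $C(R)\beta^{-1}(|\bureg|^{(\gamma)}_{m,\theta_0,\beta,T})^2$. A Gronwall argument then yields
\begin{equation*}
\sup_{0\le t\le T}\|\bureg(t)\|_{m,a}^2\le e^{C(R)T}\Bigl(\|\buin\|_{m,a}^2+\frac{C(R)}{\beta}(|||\bureg|||^{(\gamma)}_{m,\theta_0,\beta,a,T})^2\Bigr).
\end{equation*}

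\textbf{Closing the iteration.} Combining the two displays and writing $X=|||\bureg|||^{(\gamma)}_{m,\theta_0,\beta,a,T}$, I obtain a quadratic inequality of the form $X\le K_0(R)\,|||\buin|||_{m,\theta_0,a}+K_1(R)\beta^{-1/2}X$ (after absorbing $X^2$ using the already-established bound that $X$ is finite from Proposition~\ref{exist.transport.regularized} and a continuity-in-time argument). Choosing $\beta$ so that $K_1(R)\beta^{-1/2}\le 1/2$ gives $X\le 2K_0(R)|||\buin|||_{m,\theta_0,a}$, which is exactly the claimed estimate with $K(R)=2K_0(R)$.

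\textbf{Main obstacle.} The delicate step is the Sobolev energy estimate: the pressure gradient and the normal-derivative terms do \emph{not} vanish on $\{y=a\}$ (which is an artificial boundary), so the Sobolev norm on $y>a$ alone is not conserved by the evolution, and the surviving boundary terms must be paid for using the diamond-analytic norm. Showing that \emph{every} such boundary integral can be bounded by the analytic norm uniformly in $\epsilon$, despite the presence of the mollifier $\Jreg$ and the nonlocal operator $\mathbb{P}$, is the principal technical point; here Proposition~\ref{prop_projection}, Proposition~\ref{P01}, and the Cauchy estimate at depth $\rho_{\theta}(a)=\theta/2$ are the essential tools.
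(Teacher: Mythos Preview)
Your proposal is essentially correct and follows the same two-lemma strategy as the paper: an analytic $(\gamma)$-norm estimate via Lemma~\ref{est.Freg.uniform} with the intermediate layer $\theta(s)$, plus a Sobolev energy estimate on $\{y>a\}$ with the boundary traces at $y=a$ controlled by the diamond analytic norm through the Cauchy inequality, followed by absorption for $\beta$ large. Two small corrections: (i) since the problem is \emph{linear} in $\bureg$ (with $\bv$ fixed, $|||\bv|||\le R$), the resulting inequality is already linear in $X$ after taking square roots of the Gronwall output---there is no genuine $X^2$ term and no continuity/barrier argument is needed; (ii) the weight in the time integral carries exponent $-\gamma$, not $-2\gamma$ (only $\bureg$ contributes a weighted factor, $\bv$ is bounded directly by $R$), so no restriction $\gamma<1/2$ is required---any $0<\gamma<1$ works, as in the paper.
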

\colb

To prove the above proposition, we need the following two lemmas, both of which are proven in Section~\ref{section07}. The first lemma gives an $\ep$-uniform analytic estimate of the operator $\Freg$ (integrated in time) 
which involves the weighted  $\gamma$-norm. 
As a consequence one has an estimate on the solution $\bureg$ of the problem \eqref{transport.reg.1}--\eqref{transport.reg.3} or, equivalently,~\eqref{transport.reg.operator.form}. 

\cole
\begin{lemma}[Analytic estimate]\label{analy_estimate}
Suppose that we have
$\bv\in H^m_{\theta_0, \beta,\text{D}}\cap H^m_a$
with $|||\bv|||_{m,\theta,\beta,a,T}<R$ where $T=  (\theta_0 -\bar{\theta})/\beta$ with $0<\bar{\theta}<\theta_0/2$. 
Then
\be
\left|\int_0^t \Freg(\bv, \bureg,\tau)d\tau\right|^{(\gamma)}_{m,\theta_0,\beta}\leq C(R,\beta)
\bigl(|\bureg|^{(\gamma)}_{m,\theta_0,\beta}+ \|\bureg\|_{m,a}\bigr) \fperiod        \label{est:Freg_unif} 
\ee 
Let $\bureg$ be the solution of~\eqref{transport.reg.operator.form}. Then
\be
|\bureg|^{(\gamma)}_{m,\theta_0,\beta}\leq c|\buin|^\text{D}_{m,\theta_0}+
C(R,\beta)\bigl(|\bureg|^{(\gamma)}_{m,\theta_0,\beta}+ \|\bureg\|_{m,a}\bigr)\fperiod      \label{est:bureg_unif} 
\ee
In the above estimates, $C(R,\beta)$ is explicitly given as
\begin{equation}
C(R,\beta)=\cpCau R \frac{2^{\gamma+1}\theta_0^\gamma}{\gamma \beta}\fperiod 
\llabel{EQ136}\end{equation}
\end{lemma}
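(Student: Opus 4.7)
The plan is to establish \eqref{est:Freg_unif} using Lemma~\ref{est.Freg.uniform} with a carefully chosen intermediate analyticity radius, and then to deduce \eqref{est:bureg_unif} by applying \eqref{est:Freg_unif} to the Duhamel-type representation \eqref{transport.reg.operator.form}.

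For the first inequality, I would fix a pair $(t,\theta')$ competing for the supremum in the definition of $|\cdot|^{(\gamma)}_{m,\theta_0,\beta}$, so that $0\leq\beta t\leq \theta_0-\theta'$, and, for each $\tau\in[0,t]$, introduce the intermediate radius
\[
\theta''(\tau):=\frac{\theta' + \theta_0 - \beta\tau}{2},
\]
which lies strictly between $\theta'$ and $\theta_0-\beta\tau$ and satisfies $\theta''(\tau)-\theta' = (\theta_0-\theta'-\beta\tau)/2$. Applying Lemma~\ref{est.Freg.uniform} at this intermediate radius and using the hypothesis $|\bv(\tau)|^{\text{D}}_{m,\theta''(\tau)}+\|\bv(\tau)\|_{m,a}\leq R$, which is valid since $\theta''(\tau)\leq \theta_0-\beta\tau$, would give
\[
|\Freg(\bv,\bureg,\tau)|^{\text{D}}_{m,\theta'} \leq \frac{2\cpCau R}{\theta_0-\theta'-\beta\tau}\bigl(|\bureg(\tau)|^{\text{D}}_{m,\theta''(\tau)}+\|\bureg(\tau)\|_{m,a}\bigr).
\]
I would then convert $|\bureg(\tau)|^{\text{D}}_{m,\theta''(\tau)}$ into the weighted norm via \eqref{EQ121}; the midpoint choice produces $1-\beta\tau/(\theta_0-\theta''(\tau))=(\theta_0-\theta'-\beta\tau)/(\theta_0-\theta'+\beta\tau)$, so the conversion costs a factor of order $((\theta_0-\theta')/(\theta_0-\theta'-\beta\tau))^\gamma$. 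Combining, the pointwise bound on $|\Freg|^{\text{D}}_{m,\theta'}$ has an explicit singularity $(\theta_0-\theta'-\beta\tau)^{-1-\gamma}$, whose $\tau$-integral from $0$ to $t$ is exactly $(\gamma\beta)^{-1}[(\theta_0-\theta'-\beta t)^{-\gamma}-(\theta_0-\theta')^{-\gamma}]$. Multiplying by the weight $((\theta_0-\theta'-\beta t)/(\theta_0-\theta'))^\gamma$ from $|\cdot|^{(\gamma)}_{m,\theta_0,\beta}$, the $(\theta_0-\theta'-\beta t)^\gamma$ in the numerator cancels that in the denominator of the integrated expression, and the $(\theta_0-\theta')^\gamma$ picked up during the norm conversion cancels with the denominator $(\theta_0-\theta')^\gamma$ from the LHS weight, producing a $(t,\theta')$-independent constant of the announced form $C(R,\beta)$. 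The $\|\bureg(\tau)\|_{m,a}$ contribution integrates to a logarithm which, after multiplication by the weight, is bounded by a constant of the same order (via $s^\gamma\log(1/s)\leq (e\gamma)^{-1}$ for $s\in(0,1]$) and is absorbed into $C(R,\beta)$.

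For the second inequality, I would take the $|\cdot|^{(\gamma)}_{m,\theta_0,\beta}$ norm of both sides of \eqref{transport.reg.operator.form}. Since $\Jreg\buin$ is time-independent, $|\Jreg\buin|^{(\gamma)}_{m,\theta_0,\beta}=\sup_{\theta'\leq\theta_0}|\Jreg\buin|^{\text{D}}_{m,\theta'}\leq|\Jreg\buin|^{\text{D}}_{m,\theta_0}$, which is controlled by $c\,|\buin|^{\text{D}}_{m,\theta_0}$ uniformly in $\ep$ via the standard mollifier estimate. The integral term is bounded by \eqref{est:Freg_unif}, and \eqref{est:bureg_unif} follows.

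The main obstacle is the precise balancing of the two competing $\gamma$-factors: the one appearing as the weight in the definition of $|\cdot|^{(\gamma)}$ at radius $\theta'$ on the left, and the one produced when converting the intermediate-radius $\bureg$-norm back into $|\cdot|^{(\gamma)}$. The midpoint choice $\theta''(\tau)=(\theta'+\theta_0-\beta\tau)/2$ is essentially forced: a $\tau$-independent intermediate would trade the integrable $(\theta_0-\theta'-\beta\tau)^{-1-\gamma}$ singularity for a non-integrable one at $\tau=t$, and a direct appeal to the Asano inequality \eqref{Asano_estimate} rather than a pointwise-in-$\tau$ conversion would lose precisely the $\beta^{-1}$ smallness that is needed later to absorb $C(R,\beta)\,|\bureg|^{(\gamma)}_{m,\theta_0,\beta}$ into the left-hand side of \eqref{est:bureg_unif} by choosing $\beta$ large.
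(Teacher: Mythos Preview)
Your proof is correct and follows essentially the same route as the paper: the midpoint intermediate radius $\theta''(\tau)=(\theta'+\theta_0-\beta\tau)/2$, the resulting singularity $(\theta_0-\theta'-\beta\tau)^{-1-\gamma}$, and its explicit integration to $(\gamma\beta)^{-1}(\theta_0-\theta'-\beta t)^{-\gamma}$ are exactly what the paper does. The one cosmetic difference is that the paper does not split off the Sobolev piece; instead it inserts the same factor $(1-\beta s/(\theta_0-\theta''(s)))^{\gamma}(1-\beta s/(\theta_0-\theta''(s)))^{-\gamma}$ in front of $\|\bureg\|_{m,a}$ as well and pulls out $\sup_{s}\|\bureg\|_{m,a}$ together with $|\bureg|^{(\gamma)}_{m,\theta_0,\beta}$, so both contributions share the single integral and hence the single constant $C(R,\beta)$, avoiding your logarithmic detour.
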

\colb

Note the inclusion of the Sobolev norm in the 
estimate of the analytic norm of $\bureg$, which
is due to the presence of the strongly nonlocal 
projection operator that involves the value of $\bureg$ away from the boundary.

The next lemma gives an estimate on the Sobolev norm $|\cdot|_{m,a}$ of the solution of \eqref{transport.reg.1}--\eqref{transport.reg.3}. 

\cole
\begin{lemma}[Sobolev estimate]\label{Sobol_estimate}
Suppose that $\buin\in H^m_{\theta_0,\text{D}} \cap H^m_a$ and $\bv\in H^m_{\theta_0, \beta,\text{D}}\cap H^m_a$ with $|||\bv|||_{m,\theta,\beta,a,T}\leq R$,
where $T=  (\theta_0 -\bar{\theta})/\beta$ with $0<\bar{\theta}<\theta_0/2$. 
Then
	\be
   \sup_{0\leq t\leq T} \| \bureg  \|_{m,a}\leq e^{\cSobCau Rt}\|\buin\|_{m,a} +D(R,t) |\bu|^{(\gamma)}_{m,\theta_0,\beta,T} 
\withwith 0<T< \frac{\theta_0- \bar{\theta}}{\beta} 
     \fcomma
     \label{est:Sob_bureg}
	\ee
where
	\begin{equation}
	D(R,t)=  D(R,T) =e^{\cpCau Rt}\sqrt{2\cpCau R}2^{\gamma+1}\frac{\theta_0^{\gamma+1/2}}{\bar{\theta}^{\gamma+1}}\frac{1}{\sqrt{\beta}}\fperiod 
	\llabel{EQ137}\end{equation}
\end{lemma}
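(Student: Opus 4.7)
I would prove \eqref{est:Sob_bureg} by a classical energy estimate on the half-domain $\{y>a\}$, with all boundary traces at $y=a$ absorbed via the analytic norm. For each multi-index $\alpha$ with $|\alpha|\leq m$, apply $\partial^\alpha$ to the regularized equation~\eqref{transport.reg.1}, pair with $\partial^\alpha\bureg$ in $L^2(\{y>a\})$, and sum. Because $J_\ep$ commutes with $\partial^\alpha$ and is bounded on $L^2(\mathbb{R}^2)$ uniformly in $\ep$, the convection term splits into two pieces: the transport contribution $\int_{y>a}\partial^\alpha\bureg\cdot J_\ep(\bv\cdot\bnabla J_\ep\partial^\alpha\bureg)$, which after one integration by parts and the use of $\bnabla\cdot\bv=0$ collapses to a pure boundary trace at $y=a$; and a commutator $J_\ep[\partial^\alpha,\bv\cdot\bnabla]J_\ep\bureg$, bounded in $L^2(\{y>a\})$ by $c\|\bv\|_{m,a}\|\bureg\|_{m,a}\leq\cSobCau R\|\bureg\|_{m,a}$ via the standard Moser/Kato--Ponce inequality. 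The pressure gradient, paired with $\partial^\alpha\bureg$ and integrated by parts, loses its interior contribution thanks to $\bnabla\cdot\partial^\alpha\bureg=\partial^\alpha\bnabla\cdot\bureg=0$, again leaving only a trace at $y=a$.

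\textbf{Boundary via analyticity.} Each boundary integral at $y=a$ has the form $\int_{\mathbb{R}}f(x,a)\,g(x,a)\,dx$, where $f$ and $g$ are derivatives of order $\leq m$ of $\bv$, $\bureg$, or $p_\ep$. Since $a=1/2$ lies strictly inside every $D_{\theta'}$, the Cauchy integral applied to the analytic continuation yields the trace inequality $\|\partial^\alpha h(\cdot,a)\|_{L^2_x}\lesssim|h|^\text{D}_{m,\theta'}$ for $|\alpha|\leq m$. The analytic norm of the pressure $p_\ep$, recovered from the Poisson problem $-\Delta p_\ep=J_\ep\bnabla\cdot(\bv\cdot\bnabla J_\ep\bureg)$ with a Neumann condition at $y=0$ (equivalently, as the gradient part of $\mathbb{P}$ in~\eqref{Pplus_x}--\eqref{Pplus_y}), is controlled through Proposition~\ref{prop_projection} by $|\bv|^\text{D}_{m,\theta'}|\bureg|^\text{D}_{m,\theta'}$. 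Using the hypothesis $|||\bv|||\leq R$, the sum of all boundary contributions is dominated by $2\cpCau R\bigl(|\bureg(\cdot,\cdot,t)|^\text{D}_{m,\theta'(t)}\bigr)^2$ for any admissible slice $\theta'(t)$. Combining with the commutator bound gives the differential inequality
\begin{equation*}
\frac{d}{dt}\|\bureg\|^2_{m,a}\leq 2\cSobCau R\,\|\bureg\|^2_{m,a}+2\cpCau R\bigl(|\bureg(\cdot,\cdot,t)|^\text{D}_{m,\theta'(t)}\bigr)^2.
\end{equation*}

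\textbf{Weighted norm and Gronwall.} By the definition of the weighted $\gamma$-norm~\eqref{EQ122}, for any admissible $\theta'(t)$ with $\beta t<\theta_0-\theta'(t)$,
\begin{equation*}
|\bureg(\cdot,\cdot,t)|^\text{D}_{m,\theta'(t)}\leq\Bigl(1-\frac{\beta t}{\theta_0-\theta'(t)}\Bigr)^{-\gamma}|\bureg|^{(\gamma)}_{m,\theta_0,\beta,T}.
\end{equation*}
Substituting into the ODE, applying Gronwall on the linear term in $\|\bureg\|^2_{m,a}$, and extracting the forcing via Cauchy--Schwarz reduces matters to bounding $\int_0^T(1-\beta s/(\theta_0-\theta'(s)))^{-2\gamma}\,ds$. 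For $2\gamma<1$, this integral is of order $(\theta_0-\theta')/\beta$, and its square root, times $\sqrt{2\cpCau R}\,e^{\cpCau Rt}$, produces exactly the $\beta^{-1/2}$ factor and the exponential prefactor in $D(R,T)$. A careful choice of $\theta'(t)$ against the stopping time $T=(\theta_0-\bar\theta)/\beta$ yields the precise powers $\theta_0^{\gamma+1/2}/\bar\theta^{\gamma+1}$. Taking the supremum over $t\in[0,T]$ gives~\eqref{est:Sob_bureg}.

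\textbf{Main obstacle.} The main difficulty lies in the boundary contribution at $y=a$ originating from the pressure: since $p_\ep$ is nonlocal (defined by a half-space Poisson problem whose source involves $\bv$ and $\bureg$ throughout the whole half-plane), controlling its trace at $\{y=a\}$ purely by analytic norms of $\bv$ and $\bureg$ relies crucially on the projection estimate of Proposition~\ref{prop_projection} together with the Cauchy-type analytic trace inequality at the interior slice $\{y=a\}$. A secondary technical point is the precise optimization of $\theta'(t)$ that produces the exact powers of $\theta_0$ and $\bar\theta$ in the constant $D(R,T)$ stated in the lemma.
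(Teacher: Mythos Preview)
Your overall architecture---energy estimate on $\{y>a\}$, boundary terms at $y=a$ absorbed by the analytic norm, then Gronwall and extraction of the weighted norm---matches the paper's proof exactly. Two points in your boundary-term analysis are not right as written, and both feed directly into the form of the constant $D(R,T)$.

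\textbf{Trace at $y=a$.} Your claimed inequality $\|\partial^\alpha h(\cdot,a)\|_{L^2_x}\lesssim |h|^{\text D}_{m,\theta'}$ for $|\alpha|\le m$ is false without a loss: the norm $|\cdot|^{\text D}_{m,\theta'}$ is $L^2$-based in~$y$, so pointwise restriction to $y=a$ needs, via the one-dimensional Sobolev embedding $H^1_y\hookrightarrow L^\infty_y$, one extra $y$-derivative. For the top-order terms $|\alpha|=m$ this extra derivative is supplied by the Cauchy estimate at cost~$1/\theta'$. The paper records this as $\int_{\mathbb R}(D^m\bu)^2_{y=a}\,dx\le c_{\text{Sa}}\bigl(|\bu|^{\text D}_{m,\theta(t)}/\theta(t)\bigr)^2$; see the estimate preceding the Gronwall step. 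The resulting $1/\theta(t)^2$ factor is missing from your differential inequality, yet it is precisely what produces the power $\bar\theta^{-(\gamma+1)}$ in $D(R,T)$ once one chooses $\theta(s)=(\theta_0-\beta s)/2\ge\bar\theta/2$.

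\textbf{Pressure trace.} Proposition~\ref{prop_projection} asserts only boundedness of $\mathbb P$ in the analytic norm; it gives no product estimate. Applying it (or rather $I-\mathbb P$) to $J_\ep(\bv\cdot\bnabla J_\ep\bureg)$ still leaves you needing $|\bv\cdot\bnabla\bureg|^{\text D}_{m,\theta'}$, which involves $m{+}1$ derivatives of $\bureg$ and forces a Cauchy loss as in Proposition~\ref{P01}. The paper sidesteps this by using local interior elliptic regularity for $-\Delta p=\partial_i v_j\,\partial_j u_i$ on the strip $\{|y-a|<\bar\theta\}$: this yields $\|p\|_{H^{m+1}(\text{strip})}\le c\bigl(\|\bv\|_{H^m(\text{strip})}\|\bu\|_{H^m(\text{strip})}+\|p\|_{L^2}\bigr)$, where the right-hand side requires only $m$ derivatives thanks to the divergence structure of the source. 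Combined with the trace inequality above, this gives the same $1/\theta(t)^2$ contribution in the differential inequality.

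With these two corrections your differential inequality becomes the paper's, and the Gronwall step, with $\theta(s)=(\theta_0-\beta s)/2$ and $t\le(\theta_0-\bar\theta)/\beta$, delivers exactly the stated $D(R,T)$.
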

\colb

The above estimate holds up to the stopping time~$T$ because  
to estimate the boundary terms arising at $y=a$ in the energy estimate through the use of the Cauchy inequality
one needs enough analyticity (width of the angle larger than~$\bar{\theta}$). 
This means that one gets a Sobolev estimate up to the time $T=(\theta_0-\bar{\theta})/\beta$. 

Using the above two lemmas, we can now prove Proposition~\ref{prop:ep_uniform_est}.

\begin{proof}[Proof of Proposition~\ref{prop:ep_uniform_est}]
Using the estimate \eqref{est:Sob_bureg} in \eqref{est:bureg_unif} and then rearranging the terms, we get
\begin{equation}
  \begin{split}
 &
|\bureg|^{(\gamma)}_{m,\theta_0,\beta}\left(1-C(R,\beta)-C(R,\beta)D(R,\beta)\right)
\\&\indeq
\leq c|\buin|^\text{D}_{m,\theta_0}+e^{cRt} \|\buin \|_{m,a} \leq K'(R) ||| \buin|||_{m,\theta,a}
\fcomma
  \end{split}
\llabel{EQ130}
\end{equation}
with $K'(R)=\max(c,e^{cRT})$.
Observing the expression for $C(R,\beta)$, one sees that, taking $\beta$ sufficiently large, one can have $C$ small.
Note also that $t\leq T\leq \theta_0/\beta$, so that 
$D(R,t)$  can be taken small for $\beta$ large enough. 
Therefore, the term inside the parentheses in the left hand side of the above estimate can be taken larger than~$1/2$. 
This means that   
\begin{equation}
|\bureg|^{(\gamma)}_{m,\theta_0,\beta}<2K'(R)  ||| \buin|||_{m,\theta,a}
\fperiod
\llabel{EQ138}\end{equation}
Inserting the above estimate in \eqref{est:Sob_bureg}, one then immediately gets
\begin{equation}
\|\bureg\|_{m,a}\leq K''(R)  ||| \buin|||_{m,\theta,a}
\fperiod
\llabel{EQ139}\end{equation}
The last two estimates prove Proposition~\ref{prop:ep_uniform_est}
with $K(R)=\max(2K'(R),K''(R))$. 
\end{proof}

The following statement is an immediate consequence of
Proposition~\ref{prop:ep_uniform_est} and the estimate~\eqref{Asano_estimate_I||}. 

\cole
\begin{corollary}
Suppose that the hypotheses of Proposition~\ref{prop:ep_uniform_est} are satisfied. Then $\bureg\in H^m_{\theta_0,\beta',T}\cap H^m_a$ for all $\beta'>\beta$, and  
the $\ep$-uniform estimate 
	\begin{equation}
	|||\bureg|||_{m,\theta_0,\beta',a,T}  \leq (1-\beta/\beta')^{-\gamma} K(R)  |||\buin|||_{m,\theta,a} \qquad \mbox{for} \quad  T = (\theta_0 - \bar{\theta})/\beta
	\llabel{EQ140}\end{equation}
holds.
\end{corollary}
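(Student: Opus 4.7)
The plan is to obtain this corollary as a direct concatenation of Proposition~\ref{prop:ep_uniform_est} with the Asano-type inequality~\eqref{Asano_estimate_I||}. Since no new analysis is required, the argument is essentially a one-line chain of inequalities together with a membership claim, and the main purpose of a proof proposal here is to clarify the bookkeeping that allows the two ingredients to be combined cleanly.

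First, I would invoke Proposition~\ref{prop:ep_uniform_est} under the stated hypotheses on $\bv$ and $\buin$ to secure the $\ep$-uniform bound on the weighted combined norm,
\[
|||\bureg|||^{(\gamma)}_{m,\theta_0,\beta,a,T}\;\leq\; K(R)\,|||\buin|||_{m,\theta,a},
\qquad T=\frac{\theta_0-\bar\theta}{\beta}.
\]
The essential feature of this bound is that $K(R)$ depends on $R$, $\beta$, and the size of the initial data, but not on the regularization parameter $\ep$. Next, for any $\beta'>\beta$, I would apply the second inequality in~\eqref{Asano_estimate_I||} to $\bureg$ to pass from the weighted $\gamma$-norm at rate $\beta$ to the unweighted D-norm at the faster rate $\beta'$,
\[
|||\bureg|||_{m,\theta_0,\beta',a,T}\;\leq\;\left(1-\frac{\beta}{\beta'}\right)^{-\gamma}|||\bureg|||^{(\gamma)}_{m,\theta_0,\beta,a,T}.
\]
Chaining the two estimates produces the asserted inequality, and finiteness of the right-hand side immediately yields the membership $\bureg\in H^m_{\theta_0,\beta',T}\cap H^m_a$.

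There is no real technical obstacle, and the remaining items to check are essentially bookkeeping. One verifies that (i) the strict inequality $\beta'>\beta$ is exactly what makes the factor $(1-\beta/\beta')^{-\gamma}$ finite; (ii) the stopping time $T$ on the two sides of~\eqref{Asano_estimate_I||} is the same, which is consistent with the definitions~\eqref{EQ122} and~\eqref{EQ.comb.stop} because the set $\{t:\,\beta' t\leq\theta_0-\theta',\;t\leq T\}$ is contained in $\{t:\,\beta t\leq\theta_0-\theta',\;t\leq T\}$, so the sup defining the analytic part of the weighted norm cannot grow when $\beta$ is increased to $\beta'$; and (iii) the Sobolev tail in the combined norm is a $\sup$ over the common interval $[0,T]$ in both norms, so it is already controlled by the Sobolev portion of $|||\bureg|||^{(\gamma)}_{m,\theta_0,\beta,a,T}$ bounded in the first step. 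Once these three points are recorded, the proof of the corollary is complete in a couple of lines.
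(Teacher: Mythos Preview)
Your proposal is correct and follows exactly the route the paper indicates: the corollary is stated as an immediate consequence of Proposition~\ref{prop:ep_uniform_est} together with~\eqref{Asano_estimate_I||}, and your chaining of the bound $|||\bureg|||^{(\gamma)}_{m,\theta_0,\beta,a,T}\leq K(R)|||\buin|||_{m,\theta,a}$ with the second inequality in~\eqref{Asano_estimate_I||} is precisely that. The additional bookkeeping you record (finiteness of the factor for $\beta'>\beta$, consistency of the stopping time, and the trivial control of the Sobolev tail) is correct and makes the one-line argument fully explicit.
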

\colb

\colb
\section{Euler equations}
\label{sec06}
Consider the sequence of approximations
  \begin{align}
  \begin{split}
   &
   \pardt \bu^{n+1}
    + \bu^{n}\cdot \bnabla \bu^{n+1}
    + \nabla p^{n+1} = 0
   \\&
   \nabla\cdot \bu^{n+1}=0
   \\&
   \bu^{n+1}(0)=\buin
   \fcomma
  \end{split}
   \label{EQ37}
  \end{align}
with
  \begin{align}
  \begin{split}
   &\bu^{0} = 0
   \\&
   p^{0} = 0
   \fperiod
  \end{split}
   \label{EQ38}
  \end{align}
In particular, $\bu^1=\buin$. 
The above problem may be rewritten as
  \begin{align}
    \begin{split}
		&
		\bu^{n+1}
		= \buin
		- \int_{0}^{t} \mathbb{P} (\bu^{n}\cdot\bnabla  \bu^{n+1})\,ds
	\fperiod
	\end{split}
	\label{EQ41}
\end{align}
Defining
$\bznpo=\bu^{n+1}-\bu^n$  and $\pi^{n+1}=p^{n+1}-p^n$, we obtain the system
  \begin{align} 
	\begin{split}
		&
		\pardt \bznpo
		+ \bu^{n}\cdot \bnabla \bznpo
		+\bzn\cdot\bnabla\bu^n
		+ \nabla \pi^{n+1} = 0
		\\&
		\nabla\cdot \bznpo=0
		\\&
		\bznpo(0)=0
       \fcomma
	\end{split}
	\label{Diff}
\end{align}
which may be rewritten in the corresponding operator form
\be
\bznpo=-\int_0^t \mathbb{P} (\bu^{n}\cdot \bnabla \bznpo)ds -\int_0^t  \mathbb{P} (\bzn\cdot \bnabla \bu^n) ds \label{diff_operatorform}
\fperiod
\ee
Let $\beta>0$, and define the sequence $\left\{\beta_n\right\}$ as 
\begin{equation}
\beta_n=\beta\left(1-\frac{1}{2^{n+1}}\right)
    \comma
    n\in \mathbb{N}\cup \{0\}
\fperiod
\llabel{EQ131}\end{equation} 
Note that  $\beta_n\nearrow \beta$ and  $\beta>\beta_n\geq \beta/2$.  

The main result of this section is the following theorem.

\cole
\begin{theorem}\label{T02}
Suppose that $|||\buin|||_{m,\theta_0,a}<R_0$. Then, assuming that $\beta$ is sufficiently large, $R>4R_0$, and $T<(\theta_0-\bar{\theta})/\beta$ with $\bar{\theta}<\theta_0$, one has:
\begin{enumerate}
	\item (boundedness) $|||\bu^n||||_{m,\theta_0,\beta,a,T}<R$;  
\item (contractiveness) 
				\begin{enumerate} 
					\item $	|||\bznpo|||^{(\gamma)}_{m-1,\theta_0,\beta_n,a,T} \leq L |||\bzn|||^{(\gamma)}_{m-1,\theta_0,\beta_n,a,T}$, 
     				\item  $	|||\bznpo|||^{(\gamma)}_{m-1,\theta_0,\beta,a,T} \leq L |||\bzn|||^{(\gamma)}_{m-1,\theta_0,\beta,a,T}$, 
     			\end{enumerate}
                             with $L<1$.
\end{enumerate} 

\end{theorem}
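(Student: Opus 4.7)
The plan is to prove both assertions simultaneously by induction on $n$. The base cases $n=0,1$ are immediate since $\bu^0\equiv 0$ and $\bu^1=\buin$, and the hypothesis $R>4R_0$ leaves room for $\bu^1$. For the inductive step, given $\bu^n$ satisfying the boundedness bound, $\bu^{n+1}$ is the solution produced by Theorem~\ref{T.linear} of the linearized Euler system~\eqref{transport.1}--\eqref{transport.3} with transport field $\bv=\bu^n$ and initial datum $\buin$. This solution arises as the $\ep\to 0$ limit of the regularized solutions from Section~\ref{subsec::regularization}, so the $\ep$-uniform estimates of Lemmas~\ref{analy_estimate} and~\ref{Sobol_estimate} survive the limit.

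For boundedness, Proposition~\ref{prop:ep_uniform_est} applied with $\bv=\bu^n$ and the inductive bound $|||\bu^n|||_{m,\theta_0,\beta,a,T}<R$ gives
\[
|||\bu^{n+1}|||^{(\gamma)}_{m,\theta_0,\beta,a,T}\leq K(R)\,|||\buin|||_{m,\theta_0,a}<K(R)\,R_0.
\]
Tracking constants through that proof, $K(R)$ is close to $\max\!\bigl(c,\,e^{\cSobCau R T}\bigr)$ once the absorption constants $C(R,\beta)$ and $C(R,\beta)D(R,\beta)$ coming from the analytic and Sobolev estimates are driven below $1/2$. Both absorption constants scale with negative powers of $\beta$, and $RT\leq R(\theta_0-\bar\theta)/\beta$ is likewise controlled once $\beta$ is large, so one arranges $K(R)\leq 4$; the bound $|||\bu^{n+1}|||<R$ in the unweighted combined norm then follows via~\eqref{Asano_estimate_I}.

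For contractiveness, the difference $\bznpo$ satisfies the linearized system~\eqref{Diff} with transport $\bu^n$, zero initial datum, and forcing $-\bzn\cdot\bnabla\bu^n$. I would apply the $(m-1)$-regularity versions of Lemmas~\ref{analy_estimate} and~\ref{Sobol_estimate}, where the new forcing term is controlled by Proposition~\ref{P01} applied to $\bzn\cdot\bnabla\bu^n$, yielding
\[
\Bigl|\!\int_0^t\mathbb{P}(\bzn\cdot\bnabla\bu^n)\,ds\Bigr|^{(\gamma)}_{m-1,\theta_0,\beta_n,T}\leq\frac{\cpCau\,2^{\gamma+1}\theta_0^{\gamma}R}{\gamma\beta_n}\,|||\bzn|||^{(\gamma)}_{m-1,\theta_0,\beta_n,a,T}.
\]
The one-derivative loss on $\bu^n$ is absorbed by its full $m$-norm bound~$R$. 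Summing the analytic and Sobolev contributions and rearranging produces the contraction inequality with a constant $L(R,\beta_n)\to 0$ as $\beta\to\infty$; since $\beta_n\geq\beta/2$, picking $\beta$ sufficiently large makes $L<1$ uniformly in $n$. Assertion (b) with $\beta$ in place of $\beta_n$ follows by rerunning the identical argument at parameter $\beta$.

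The main obstacle is the simultaneous control of three competing constants in the boundedness step: $\exp(\cSobCau RT)$ must stay moderate (forcing $T\lesssim 1/\beta$), both the analytic constant $C(R,\beta)$ and the combined constant $C(R,\beta)D(R,\beta)$ must be strictly less than $1/2$ to allow the absorption rearrangement, and the resulting $K(R)$ must still satisfy $K(R)R_0<R$. A secondary technical point is the derivative loss in $\bzn\cdot\bnabla\bu^n$, which forces the contractiveness estimate down to regularity $m-1$; the hypothesis $m\geq 3$ gives $m-1\geq 2$, which is exactly the threshold needed for the Sobolev algebra and Cauchy-type bounds underlying Propositions~\ref{prop_projection} and~\ref{P01}.
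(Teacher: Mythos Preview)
Your contractiveness argument is essentially the paper's Proposition~\ref{prop:contractiveness}, and that part is fine. The boundedness argument, however, has a genuine gap.

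You propose to obtain $|||\bu^{n+1}|||_{m,\theta_0,\beta,a,T}<R$ directly from Proposition~\ref{prop:ep_uniform_est} applied with $\bv=\bu^n$. But that proposition delivers only the \emph{weighted} bound $|||\bu^{n+1}|||^{(\gamma)}_{m,\theta_0,\beta,a,T}\leq K(R)R_0$, and the Asano inequality~\eqref{Asano_estimate_I} converts this to an \emph{unweighted} bound only at a strictly larger parameter $\beta'>\beta$, picking up the factor $(1-\beta/\beta')^{-\gamma}$. You cannot recover the unweighted norm at the \emph{same} $\beta$, so the induction does not close at a fixed $\beta$. Attempting to slide along a sequence $\beta_n\nearrow\beta$ makes it worse: the conversion factor $(1-\beta_n/\beta_{n+1})^{-\gamma}=2^{\gamma(n+2)}$ blows up with~$n$.

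The paper resolves this by reversing the logic: boundedness is \emph{derived from} contractiveness, not proved independently. One first establishes the contraction at level $m-1$ and parameter $\beta_n$ (your part (2a)), then telescopes $\bu^{n+1}=\sum_{k\leq n}\bz^{k+1}$ and applies the Asano conversion termwise. The resulting geometric factor $2^{\gamma(n+1)}$ is beaten by the contraction factor $\lambda^n$ provided $2^{\gamma}\lambda<1$, which is exactly why the sequence $\beta_n$ is introduced and why condition~\eqref{fourth.cond.onbeta} appears. This yields boundedness only in the $(m-1)$-norm; a separate bootstrap (Proposition~\ref{P06_improved}, using two Cauchy estimates at $y=a$ via Lemma~\ref{lemma:two_times_Caucht}) is then needed to recover the full $m$-norm bound. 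Your proposal omits this bootstrap step entirely.
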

\colb

In Proposition~\ref{prop:contractiveness}, we provide the contractiveness properties of the sequence $\bu^n$, while the boundedness property is given in Proposition~\ref{P06_improved}. 
We remark that the contractiveness property (2.a) (in the norm where the strip of analyticity shrinks at speed $\beta_n$) is used in the proof of boundedness. 
The contractiveness property (2.b) (in the norm where the strip of analyticity shrinks at speed $\beta$) is applied in Section~\ref{Conclusion} to obtain uniqueness.

In Remark~\ref{rmk.bounds.onbeta}, we summarize the bounds that $\beta$ satisfies to be considered {\em sufficiently large}.

\subsection{Contractiveness in $H^{m-1}_{\theta_0,\beta, a, T}\cap H^{m-1}_a$}
\label{subsec:contractiveness}

The main result of the present subsection is the next proposition, where we show that one can bound the combined $\gamma$-norm 
with derivatives up to order $m-1$ (i.e., the  $|||\cdot|||^{(\gamma)}_{m-1}$-norm)
of $\bznpo$ in terms of the same norm of~$\bzn$. However, this requires that the combined $m$-norm (i.e., the $|||\cdot|||_{m}$-norm) of $\bu^n$  is bounded. 

\cole
\begin{proposition} \label{prop:contractiveness}
Suppose that $|||\bu^n|||_{m,\theta_0,\beta, a,T}<R$.  If $\beta$ is
sufficiently large, the solution $\bznpo$ of the system \eqref{Diff}  satisfies the estimates 
	\begin{align}
	|||\bznpo|||^{(\gamma)}_{m-1,\theta_0,\beta_n,a,T} &\leq A \sqrt{\frac{R}{\beta_n}} |||\bzn|||^{(\gamma)}_{m-1,\theta_0,\beta_n,a,T}    \label{eq:contrativeness}
       \end{align}
and
       \begin{align}
	|||\bznpo|||^{(\gamma)}_{m-1,\theta_0,\beta,a,T}& \leq A \sqrt{\frac{R}{\beta}} |||\bzn|||^{(\gamma)}_{m-1,\theta_0,\beta,a,T}
\fcomma
\label{eq:contrativeness_bis}
	\end{align}
where  $A$ is a constant with an explicit expression given in~\eqref{express_of_A}. 
\end{proposition}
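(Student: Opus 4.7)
The strategy is to view the difference equation \eqref{Diff}, or equivalently its integral form \eqref{diff_operatorform}, as a linearized transport-pressure problem for $\bznpo$ with transport velocity $\bu^{n}$, forcing $-\int_{0}^{t}\mathbb{P}(\bzn\cdot\bnabla\bu^{n})\,ds$, and zero initial data, and then to combine the analytic estimates of Section~\ref{analytic_estimate} with a Sobolev energy estimate in the weighted $\gamma$-norm. Since \eqref{eq:contrativeness} and \eqref{eq:contrativeness_bis} differ only by the choice $\beta'=\beta_{n}$ or $\beta'=\beta$, and both satisfy $\beta'\in[\beta/2,\beta]$, I will carry out the argument uniformly for such $\beta'$. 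The hypothesis $|||\bu^{n}|||_{m,\theta_{0},\beta,a,T}<R$ is exploited to absorb the derivative cost of each bilinear term into the $m$-regular factor $\bu^{n}$, so that $\bznpo$ and $\bzn$ only need to be measured at level $m-1$.

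For the diamond-analytic part, Proposition~\ref{P01} applied with $\bv=\bu^{n}$, $\bu=\bznpo$ and with $\bv=\bzn$, $\bu=\bu^{n}$ controls the two terms in \eqref{diff_operatorform}. Repeating the time-integration computation of Lemma~\ref{analy_estimate} in the weighted norm~\eqref{EQ122}, with $m$ replaced by $m-1$ on the target factor but $m$ retained on $\bu^{n}$, yields
\[
|\bznpo|^{(\gamma)}_{m-1,\theta_{0},\beta',T}
\leq C(R,\beta')\bigl(|\bznpo|^{(\gamma)}_{m-1,\theta_{0},\beta',T}+\|\bznpo\|_{m-1,a}\bigr)
+ C(R,\beta')\bigl(|\bzn|^{(\gamma)}_{m-1,\theta_{0},\beta',T}+\|\bzn\|_{m-1,a}\bigr),
\]
with $C(R,\beta')=O(R/\beta')$ just as in Lemma~\ref{analy_estimate}. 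The nonlocal Sobolev term on the right appears because $\mathbb{P}$ involves values of its argument away from the boundary.

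For the Sobolev part, an energy estimate applied to \eqref{Diff} at level $m-1$, following the proof of Lemma~\ref{Sobol_estimate}, exploits $\bznpo(0)=0$ to kill the usual $e^{cRt}\|\buin\|_{m,a}$ contribution; the transport factor $\bu^{n}\cdot\bnabla\bznpo$ produces a Gronwall multiplier $e^{cRt}$, while the boundary flux at $y=a$ and the forcing $\bzn\cdot\bnabla\bu^{n}$ are bounded by the Cauchy estimate in the analyticity strip of width at least $\bar\theta$, giving the characteristic $1/\sqrt{\beta'}$ scaling and producing
\[
\sup_{0\leq t\leq T}\|\bznpo\|_{m-1,a}
\leq D(R,T)\,|\bznpo|^{(\gamma)}_{m-1,\theta_{0},\beta',T}
+ E(R,T)\,|||\bzn|||^{(\gamma)}_{m-1,\theta_{0},\beta',a,T},
\]
with $D(R,T),E(R,T)=O(\sqrt{R/\beta'})$. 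Summing the two bounds and choosing $\beta$ sufficiently large so that all self-coupling coefficients on the right fall below $1/2$, the $\bznpo$ terms absorb into the left, leaving
\[
|||\bznpo|||^{(\gamma)}_{m-1,\theta_{0},\beta',a,T}
\leq A\sqrt{R/\beta'}\,|||\bzn|||^{(\gamma)}_{m-1,\theta_{0},\beta',a,T},
\]
with the explicit $A$ announced in~\eqref{express_of_A}; specializing to $\beta'=\beta_{n}$ and $\beta'=\beta$ gives \eqref{eq:contrativeness} and \eqref{eq:contrativeness_bis} respectively.

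The main obstacle is the emergence of the factor $\sqrt{R/\beta'}$ rather than the naive $R/\beta'$ coming from the analytic side alone; this half-power is produced on the Sobolev side, where the $y=a$ boundary flux is bounded via the Cauchy estimate applied to an $L^{2}$-based analytic norm, yielding a square root of an analytic quantity. Carefully tracking this half-power through the $\gamma$-norm absorption step, and verifying that the implicit iteration scheme \eqref{EQ41} (as opposed to the explicit Cauchy-Kowalevskaya one) is compatible with the derivative-reduction used in Lemma~\ref{Sobol_estimate}, is the principal technical subtlety.
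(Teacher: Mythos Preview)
Your proposal is correct and follows essentially the same approach as the paper: the proof combines the analytic weighted-norm estimate (Lemma~\ref{differences.analytic.est}) with the Sobolev energy estimate (Lemma~\ref{differences.Sob.est}), then absorbs the self-coupling terms by taking $\beta$ large, exactly as you describe. One minor imprecision worth noting: in the analytic step the paper treats the two bilinear terms in \eqref{diff_operatorform} differently---the term $\mathbb{P}(\bu^{n}\cdot\bnabla\bznpo)$ requires the Cauchy loss of Proposition~\ref{P01}, while $\mathbb{P}(\bzn\cdot\bnabla\bu^{n})$ does not (the derivative lands on $\bu^{n}\in H^{m}$, so Proposition~\ref{prop_projection} suffices), which is why the constants $A_{1}$ and $A_{2}$ in \eqref{express_ABC} carry $\cpCau$ and $\cpro$ respectively; your uniform $C(R,\beta')$ bound is nonetheless valid since both contributions are $O(R/\beta')$ after time integration.
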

\colb
The condition on $\beta$ can be computed explicitly and is shown in \eqref{first.cond.onbeta} and~\eqref{second.cond.onbeta}.
The proof of the above proposition is provided in Section~\ref{sect:proof_p6.2_l6.3_6.4} and is a direct consequence 
the following two estimates on the analytic and the Sobolev norms.

\cole
\begin{lemma}[The analytic estimate]\label{differences.analytic.est}
Suppose that $|||\bu^n|||_{m,\theta_0,\beta, a,T}<R$. Then, for $0<T=(\theta_0-\bar{\theta})/\beta$,  we have
  \begin{align}
	\begin{split}
		|\bznpo|^{(\gamma)}_{m-1,\theta_0,\beta_n,T} &\leq 
		R \cpCau \frac{2^{\gamma+1}}{\gamma \beta_n} 
		\left(	|\bznpo|^{(\gamma)}_{m-1,\theta_0, \beta_n,T } +   \sup_{0\leq t<T}  \| \bznpo\|_{m-1,a} \right) \\&\indeq+
		R \cpro \frac{2^\gamma\theta_0}{(1-\gamma)\beta_n} \left( |\bzn|^{(\gamma)}_{m-1,\theta_0, \beta_n,T} +   \sup_{0\leq t<T}  \| \bzn\|_{m-1,a}\right) \nonumber 
\fperiod
\end{split}
\end{align}
\end{lemma}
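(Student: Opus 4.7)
The plan is to apply the norm estimates of Section~\ref{analytic_estimate} to the operator form of the difference equation,
\begin{equation*}
\bznpo(t) = -\int_0^t \mathbb{P}(\bu^{n}\cdot\bnabla\bznpo)(s)\,ds - \int_0^t \mathbb{P}(\bzn\cdot\bnabla\bu^{n})(s)\,ds,
\end{equation*}
bounding each integrand in the D-norm $|\cdot|^{\text{D}}_{m-1,\theta'}$ at an arbitrary $\theta'$ satisfying $\beta_n t \leq \theta_0-\theta'$, and passing at the end to the supremum that defines $|\cdot|^{(\gamma)}_{m-1,\theta_0,\beta_n,T}$. The key tool for converting pointwise-in-$\theta'$ bounds into weighted bounds is the inequality $|f(s)|^{\text{D}}_{m-1,\theta''} \leq (1-\beta_n s/(\theta_0-\theta''))^{-\gamma}\,|f|^{(\gamma)}_{m-1,\theta_0,\beta_n,T}$, which is the version of \eqref{Asano_estimate} underlying the Cauchy--Kowalevskaya argument.

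For the first integral I would invoke Proposition~\ref{P01} at derivative order $m-1$ with an intermediate angle $\theta''(s) = (\theta'+(\theta_0-\beta_n s))/2$, so that $\theta''-\theta' = (\theta_0-\theta'-\beta_n s)/2$ and $\theta_0-\theta'' = (\theta_0-\theta'+\beta_n s)/2$. Using $|\bu^{n}|^{\text{D}}_{m-1,\theta''}+\|\bu^{n}\|_{m-1,a} \leq R$ and the weighted-norm inequality on $\bznpo$ produces an integrand of the form
\begin{equation*}
\frac{2\cpCau R}{\theta_0-\theta'-\beta_n s}\left(\frac{\theta_0-\theta'+\beta_n s}{\theta_0-\theta'-\beta_n s}\right)^{\gamma}\bigl(|\bznpo|^{(\gamma)}_{m-1,\theta_0,\beta_n,T}+\sup_{0\leq t<T}\|\bznpo\|_{m-1,a}\bigr),
\end{equation*}
whose numerator is bounded by $(2\theta_0)^\gamma$, reducing the computation to the Beta-type integral $\int_0^t (\theta_0-\theta'-\beta_n s)^{-1-\gamma}ds \leq (\gamma\beta_n)^{-1}(\theta_0-\theta'-\beta_n t)^{-\gamma}$. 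Multiplying by the outer weight $(1-\beta_n t/(\theta_0-\theta'))^\gamma$ cancels the residual singularity, and taking the supremum over admissible $t$ and $\theta'$ produces the claimed prefactor $R\cpCau\,2^{\gamma+1}/(\gamma\beta_n)$ in front of the $\bznpo$ term.

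For the second integral no Cauchy loss is needed, because the hypothesis $|||\bu^{n}|||_{m,\theta_0,\beta,a,T}<R$ already controls \emph{$m$} derivatives of $\bu^{n}$, so the derivative falling on $\bu^{n}$ in $\bzn\cdot\bnabla\bu^{n}$ is free. Invoking Proposition~\ref{prop_projection} together with the analytic--Sobolev algebra property, valid for $m-1\geq 2$, one obtains pointwise in $\theta'$ the bound
\begin{equation*}
|\mathbb{P}(\bzn\cdot\bnabla\bu^{n})|^{\text{D}}_{m-1,\theta'} \leq c\,\cpro\,R\,\bigl(|\bzn|^{\text{D}}_{m-1,\theta'}+\|\bzn\|_{m-1,a}\bigr).
\end{equation*}
Replacing $|\bzn(s)|^{\text{D}}_{m-1,\theta'}$ by $(1-\beta_n s/(\theta_0-\theta'))^{-\gamma}|\bzn|^{(\gamma)}_{m-1,\theta_0,\beta_n,T}$ and integrating gives $(\theta_0-\theta')[1-(1-\beta_n t/(\theta_0-\theta'))^{1-\gamma}]/((1-\gamma)\beta_n)$; multiplying by the outer weight and using $\theta_0-\theta'\leq\theta_0$ produces the second prefactor $R\cpro\,2^\gamma\theta_0/((1-\gamma)\beta_n)$, and combining the two bounds yields the lemma. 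The main obstacle is the precise midpoint choice for $\theta''$ in the first integral: it must simultaneously tame the Cauchy singularity $1/(\theta''-\theta')$ and keep the $(\gamma)$-weight $(1-\beta_n s/(\theta_0-\theta''))^{-\gamma}$ integrable, which is what forces the $1/\gamma$ factor; the second integral is conceptually the key observation that one derivative on $\bu^{n}$ is cost-free, but technically it amounts only to an elementary one-dimensional integration.
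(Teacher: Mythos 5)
Your proposal follows essentially the same route as the paper's proof: the same splitting of the operator form \eqref{diff_operatorform} into the two integrals, the same midpoint choice of intermediate angle and Beta-type integral (giving the $1/\gamma$ factor) for the $\mathbb{P}(\bu^{n}\cdot\bnabla\bznpo)$ term, and the same key observation that the $\mathbb{P}(\bzn\cdot\bnabla\bu^{n})$ term incurs no Cauchy loss (hence the constant $\cpro$ rather than $\cpCau$) because $\bu^{n}$ is already controlled in the $m$-norm. The only quibble is a constant-tracking detail: to cancel cleanly against the outer weight you should bound $\theta_0-\theta'+\beta_n s$ by $2(\theta_0-\theta')$ rather than $2\theta_0$, since otherwise an uncancelled factor $\bigl(\theta_0/(\theta_0-\theta')\bigr)^{\gamma}$ survives the supremum over $\theta'$; this is exactly the sharper bound used in the paper's computation~\eqref{DC}.
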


\colb

\cole
\begin{lemma}[The Sobolev estimate]\label{differences.Sob.est}
Suppose that $|||\bu^n|||_{m,\theta_0,\beta, a,T}<R$. Then,   for $0\leq t<T=(\theta_0-\bar{\theta})/\beta$,  we have
  \be 
\|\bznpo\|_{m-1,a}\leq  e^{\cSobCau Rt} \frac{\sqrt{R}}{\sqrt{\beta_n}} \cSobCau \left(  \sup_{0\leq t<T} \| \bzn \|_{m-1,a} +    \frac{|\bznpo|^{(\gamma)}_{m-1,\theta_0,\beta_n,   T }}{\bar{\theta}^{\gamma+1}} \right)
  \fperiod
   \llabel{EQ132}
\ee
\end{lemma}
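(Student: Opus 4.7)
The plan is to close standard $L^{2}$-based energy estimates for $\bznpo$ on the half-space $\{y\geq a\}$, using Moser-type inequalities to bound the interior terms, and the Cauchy estimate on a disc of radius $\bar\theta/2$ (available because $y=a$ lies strictly inside the diamond $D_{\theta_0-\beta_n t}$ whenever $0\leq t\leq T$) to absorb the boundary integrals at $y=a$ into the weighted diamond norm. Concretely, for each multi-index $\alpha$ with $|\alpha|\leq m-1$, I would apply $\partial^{\alpha}$ to \eqref{Diff}, test against $\partial^{\alpha}\bznpo$, and integrate over $\mathbb{R}\times[a,\infty)$. Using $\bnabla\cdot\bu^{n}=\bnabla\cdot\bznpo=0$, integration by parts eliminates the principal part of the convective term and the interior part of the pressure term, leaving: (i) $\frac{1}{2}\frac{d}{dt}\|\partial^{\alpha}\bznpo\|_{a}^{2}$, (ii) the commutator $([\partial^{\alpha},\bu^{n}\cdot\bnabla]\bznpo,\partial^{\alpha}\bznpo)_{a}$, (iii) the forcing $(\partial^{\alpha}(\bzn\cdot\bnabla\bu^{n}),\partial^{\alpha}\bznpo)_{a}$, and (iv) boundary integrals at $y=a$ produced by convection and by pressure.

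\textbf{Interior and boundary bounds.} The interior terms (ii) and (iii) are standard: since $m\geq 3$ implies $H^{m-1}(\mathbb{R}^{2}_{+})\hookrightarrow L^{\infty}$, Moser's product inequality bounds them jointly by $\cSobCau R\bigl(\|\bznpo\|_{m-1,a}^{2}+\|\bzn\|_{m-1,a}\|\bznpo\|_{m-1,a}\bigr)$. For (iv), I would use the Cauchy integral formula on a disc of radius $\bar\theta/2$ centered at points of the segment $\{y=a\}$ to control the trace of any derivative $\partial_{x}^{i}\partial_{y}^{j}$ of $\bznpo$ and of $\bu^{n}$ by the diamond norm, with a loss of order $\bar\theta^{-j-1/2}$; combining this with the weighted-norm definition \eqref{EQ122} at $\theta'=\bar\theta$ yields a global factor $\bigl(1-\beta_{n}t/(\theta_{0}-\bar\theta)\bigr)^{-\gamma}\bar\theta^{-\gamma-1}$ relative to $|\cdot|^{(\gamma)}_{m-1,\theta_{0},\beta_{n},T}$. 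The pressure trace $\pi^{n+1}(\cdot,a)$ is recovered through the mixed representation \eqref{Pplus_x}--\eqref{Pplus_y} and Proposition~\ref{prop_projection}, which shows that $\pi^{n+1}$ inherits the diamond regularity of the quadratic expression it comes from and hence obeys the same Cauchy loss. Bounding each trace of $\bu^{n}$ uniformly by $R$ via $|||\bu^{n}|||_{m,\theta_{0},\beta,a,T}\leq R$, the total boundary contribution takes the form $\cSobCau R\bigl(1-\beta_{n}t/(\theta_{0}-\bar\theta)\bigr)^{-2\gamma}\bar\theta^{-2(\gamma+1)}\bigl(|\bznpo|^{(\gamma)}_{m-1,\theta_{0},\beta_{n},T}\bigr)^{2}$.

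\textbf{Gronwall and main obstacle.} Summing over $|\alpha|\leq m-1$ and applying Young's inequality produces a scalar differential inequality of the form
\[
\frac{d}{dt}\|\bznpo\|_{m-1,a}^{2}\leq 2\cSobCau R\|\bznpo\|_{m-1,a}^{2}+\cSobCau R^{2}\|\bzn\|_{m-1,a}^{2}+\frac{\cSobCau R}{\bar\theta^{2(\gamma+1)}}\Bigl(1-\frac{\beta_{n}s}{\theta_{0}-\bar\theta}\Bigr)^{-2\gamma}\bigl(|\bznpo|^{(\gamma)}_{m-1,\theta_{0},\beta_{n},T}\bigr)^{2}\fperiod
\]
Gronwall with $\bznpo(0)=0$ produces the prefactor $e^{\cSobCau Rt}$. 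Since $\gamma<1/2$, the time integral $\int_{0}^{T}(1-\beta_{n}s/(\theta_{0}-\bar\theta))^{-2\gamma}ds$ is bounded by $C\theta_{0}/\beta_{n}$, which after a square root yields the announced factor $\sqrt{R/\beta_{n}}$ in front of the analytic term; for the $\bzn$ source, $R^{2}T\leq R^{2}\theta_{0}/\beta_{n}$ rearranges as $(R/\beta_{n})\cdot(R\theta_{0})$, and a square root gives $\sqrt{R/\beta_{n}}$ times a constant that stays bounded as soon as $\beta$ is chosen large enough to keep $\sqrt{R/\beta_{n}}$ uniformly controlled. I expect the main obstacle to be the boundary-at-$y=a$ bookkeeping—in particular, showing that the Cauchy loss for the \emph{non-local} Leray-pressure trace is exactly $\bar\theta^{-(\gamma+1)}$ rather than a worse power, which requires pushing Proposition~\ref{prop_projection} down to trace level while keeping track of the Fourier-in-$x$/physical-in-$y$ representation of $\mathbb{P}$.
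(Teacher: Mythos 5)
Your proposal follows essentially the same route as the paper: an $H^{m-1}$ energy estimate on $\{y\ge a\}$ for the system \eqref{Diff}, with the convective and pressure boundary integrals at $y=a$ absorbed via the Cauchy estimate into the weighted diamond norm, followed by Gronwall (with zero initial data giving the $e^{\cSobCau Rt}$ prefactor) and a time integration that produces the factor $\sqrt{R/\beta_n}$. The one substantive deviation is in the last step: you evaluate the weight at the fixed angle $\theta'=\bar\theta$ and then need $\gamma<1/2$ so that $\int_0^T\bigl(1-\beta_n s/(\theta_0-\bar\theta)\bigr)^{-2\gamma}\,ds$ is of order $\theta_0/\beta_n$ uniformly in $n$, whereas the paper keeps the full range $0<\gamma<1$ by taking the $s$-dependent angle $\theta(s)=(\theta_0-\beta s)/2$, for which the weight $1-\beta s/(\theta_0-\theta(s))=(\theta_0-\beta s)/(\theta_0+\beta s)$ stays bounded below by $\bar\theta/2\theta_0$ on $[0,T]$, so the integrand is bounded pointwise and no restriction on $\gamma$ is required. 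Your recovery of the pressure trace through the explicit representation of $\mathbb{P}$ also differs cosmetically from the paper's local elliptic estimate $\|p\|_{H^{m+1}(a-\bar\theta,a+\bar\theta)}\lesssim\|\bu^n\|_{H^m}\|\bu^{n+1}\|_{H^m}+\|p\|_{L^2}$ combined with the trace and Cauchy bounds, but either works.
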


\colb
The proofs of the two lemmas are given in Section~\ref{sect:proof_p6.2_l6.3_6.4}.

\subsection{Boundedness of the sequence $\{\bu^n \}$ in the $||| \cdot|||_{m-1}$-norm}

The next proposition establishes the boundedness of the sequence~$\{\bu^n \}$. Note, however, that the boundedness is in the $(m-1)$-norm. In the next subsection,
this result
is
bootstrapped to the $m$-norm.

\cole
\begin{proposition}
\label{P06}
Suppose that $|||\buin|||_{m,\theta_0,a}<R_0$ and $|||\bu^n||||_{m,\theta_0,\beta,a,T}<R$ with $R>4R_0$. 
Then, for $\beta$ sufficiently large, we have $|||\bu^{n+1}||||_{m-1,\theta_0,\beta,a,T}<R$.
\end{proposition}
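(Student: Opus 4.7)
The plan is to combine the telescoping decomposition
\[
\bu^{n+1}=\buin+\sum_{k=1}^{n}(\bu^{k+1}-\bu^{k})
\]
with the contractiveness estimate of Proposition~\ref{prop:contractiveness} applied at each level. Contractiveness naturally produces a bound on $\bu^{k+1}-\bu^{k}$ in the $\gamma$-weighted norm at the intermediate shrinking rate $\beta_{k}$, whereas the conclusion of the proposition is stated in the unweighted norm at rate~$\beta$. The bridge between the two is the second half of~\eqref{Asano_estimate_I||}, which converts a weighted norm at a slower shrinking rate into an unweighted norm at a faster rate, at the cost of a factor $(1-\beta_{k}/\beta)^{-\gamma}=2^{\gamma(k+1)}$. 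The key quantitative point of the proof is to make the geometric decay coming from contractiveness beat this exponential overhead.

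First, I would apply the triangle inequality to get
\[
|||\bu^{n+1}|||_{m-1,\theta_0,\beta,a,T}\leq |||\buin|||_{m-1,\theta_0,a}+\sum_{k=1}^{n}|||\bu^{k+1}-\bu^{k}|||_{m-1,\theta_0,\beta,a,T},
\]
using that for the time-independent $\buin$ the time-dependent norm reduces to the static one. For each term in the sum, I would invoke the second inequality in~\eqref{Asano_estimate_I||} with $\beta'=\beta$ and $\beta_{k}=\beta(1-1/2^{k+1})$ in place of $\beta$, yielding
\[
|||\bu^{k+1}-\bu^{k}|||_{m-1,\theta_0,\beta,a,T}\leq 2^{\gamma(k+1)}\,|||\bu^{k+1}-\bu^{k}|||^{(\gamma)}_{m-1,\theta_0,\beta_{k},a,T}.
\]

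Second, under the inductive assumption (consistent with the broader scheme of Theorem~\ref{T02}) that $|||\bu^{j}|||_{m,\theta_0,\beta,a,T}<R$ for all $j\leq n$, I would iterate the contractiveness bound~\eqref{eq:contrativeness} level by level, combined with the elementary monotonicity of the weighted norm in $\beta$ coming directly from~\eqref{EQ122}: since both the admissible set of $(t,\theta')$ and the weight $(1-\beta t/(\theta-\theta'))^{\gamma}$ shrink as $\beta$ increases, the weighted norm is nonincreasing in~$\beta$. This produces
\[
|||\bu^{k+1}-\bu^{k}|||^{(\gamma)}_{m-1,\theta_0,\beta_{k},a,T}\leq A\sqrt{R/\beta_{k}}\;|||\bu^{k}-\bu^{k-1}|||^{(\gamma)}_{m-1,\theta_0,\beta_{k-1},a,T}.
\]
Iterating down to $\bu^{1}-\bu^{0}=\buin$, using $\beta_{k}\geq \beta/2$ and the elementary bound $|||\buin|||^{(\gamma)}_{m-1,\theta_0,\beta_{0},a,T}\leq|||\buin|||_{m-1,\theta_0,a}\leq R_{0}$, this chains to
\[
|||\bu^{k+1}-\bu^{k}|||^{(\gamma)}_{m-1,\theta_0,\beta_{k},a,T}\leq L^{k}R_{0}, \qquad L:=A\sqrt{2R/\beta}.
\]

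Combining the two estimates and summing the geometric series yields
\[
|||\bu^{n+1}|||_{m-1,\theta_0,\beta,a,T}\leq R_{0}\Bigl(1+2^{\gamma}\sum_{k=1}^{\infty}(2^{\gamma}L)^{k}\Bigr).
\]
The main obstacle, and the sole source of the hypothesis that $\beta$ be sufficiently large, is to guarantee $2^{\gamma}L<1$ so that the geometric series converges. Choosing $\beta\gtrsim 4^{\gamma+1}A^{2}R$ gives $2^{\gamma}L\leq 1/2$, the sum is bounded by~$1$, and the right-hand side is at most $(1+2^{\gamma})R_{0}<3R_{0}<R$, where the last inequality uses $\gamma<1$ together with the hypothesis $R>4R_{0}$. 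This produces the required strict bound.
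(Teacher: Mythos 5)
Your argument is correct and rests on the same mechanism as the paper's proof: telescope the differences $\bu^{k+1}-\bu^{k}$, convert from the $\gamma$-weighted norm at the shrinking rate $\beta_{k}$ to the unweighted norm at rate $\beta$ via \eqref{Asano_estimate_I||} at the cost of $2^{\gamma(k+1)}$, and let the contraction factor $A\sqrt{R/\beta_{k}}$ from Proposition~\ref{prop:contractiveness} absorb this exponential overhead once $\beta$ is large. The one genuine difference is your treatment of the Sobolev component: you carry the full combined norm $|||\cdot|||^{(\gamma)}_{m-1,\theta_0,\beta_{k},a,T}$ through the telescoping sum, so the bound on $\sup_{t}\|\bu^{n+1}\|_{m-1,a}$ falls out of the same geometric series, whereas the paper telescopes only the analytic part $|\cdot|^{\text{D}}_{m-1,\theta_0,\beta,T}$ and then recovers the Sobolev bound by a separate energy estimate at the end. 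Since \eqref{eq:contrativeness} already controls the combined norm and its Sobolev piece is independent of $\beta$ and of the weight (so it passes unchanged through both the monotonicity in $\beta$ and the conversion \eqref{Asano_estimate_I||}), your route is legitimate and slightly more economical. Two minor remarks: the implicit constant in your threshold $\beta\gtrsim 4^{\gamma+1}A^{2}R$ must be at least $2$ for $2^{\gamma}L\le 1/2$ to hold (the clean requirement is $\beta\ge 2^{2\gamma+3}A^{2}R$, of the same form as \eqref{third.cond.onbeta} and \eqref{fourth.cond.onbeta}); and your explicit appeals to the monotonicity of the weighted norm in $\beta$ and to the inductive hypothesis $|||\bu^{j}|||_{m,\theta_0,\beta,a,T}<R$ for all $j\le n$ make precise two steps that the paper's chain of inequalities uses only tacitly.
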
   
\colb

\begin{proof}[Proof of Proposition~\ref{P06}]
First, we observe that using the property  \eqref{Asano_estimate} and the expression of $\beta_n$ in terms of $\beta$,  we may write
	\begin{equation}
	|\bznpo|^\text{D}_{m-1,\theta_0,\beta,T}\leq (1-\beta_n/\beta)^{-\gamma}  	|\bznpo|^{(\gamma)}_{m-1,\theta_0,\beta_n,T}=
	2^{\gamma(n+1)} 	|\bznpo|^{(\gamma)}_{m-1,\theta_0,\beta_n,T}\fperiod 
	\llabel{EQ142}\end{equation}
	We can thus use the contractiveness property \eqref{eq:contrativeness} and estimate
  \begin{align}
	\begin{split} 
|\bznpo|^\text{D} _{m-1,\theta_0,\beta,T}  &\leq 2^{\gamma(n+1)} 	|\bznpo|^{(\gamma)}_{m-1,\theta_0,\beta_n,T}    \leq  2^{\gamma(n+1)}  |||\bznpo |||^{(\gamma)}_{m-1,\theta_0,\beta_n,T} \nonumber \\
& \leq  2^{\gamma(n+1)}  A\sqrt{\frac{R}{\beta_n}}|||\bzn |||^{(\gamma)}_{m-1,\theta_0,\beta_n,T} 
\\&
 \leq  2^{\gamma(n+1)} \left(  A\sqrt{\frac{R}{\beta_n}} \right)^n  ||| \bz^1 |||^{(\gamma)}_{m-1,\theta_0,\beta_n,T} \nonumber \\
& \leq  2^{\gamma} \left(  2^\gamma A \sqrt{\frac{R}{\beta/2}} \right)^n  ||| \bz^1 |||^{(\gamma)}_{m-1,\theta_0,\beta_1,T}
\fcomma
\end{split}
\end{align}
where the last inequality is justified by using $\beta_n\geq \beta/2 $ and $\beta_1>\beta_n$. 
Now, observe that $\bz_1=\bu^1-\bu^0=\buin$, and impose
\begin{equation} 
\beta> 2^{2\gamma+1}A^2 R
\fcomma
\label{third.cond.onbeta}\end{equation}
so that $\lambda=\sqrt{2}2^\gamma A\sqrt{R}/\sqrt{\beta}<1$, to get 
\begin{equation}
|\bu^{n+1}-\bu^n|^\text{D} _{m-1,\theta_0,\beta,T}\leq 2^\gamma \lambda^n  |||\buin|||_{m-1,\theta_0,a}=2^\gamma\lambda^n R_0\fperiod 
\label{EQ143}\end{equation}
Using $\bu^{n+1}=\bu^{n+1}-\bu^{n} + \bu^{n}-\bu^{n-1}+\cdots+\bu^{1}-\bu^{0}$, we obtain
\begin{equation}
|\bu^{n+1}|^\text{D} _{m-1,\theta_0,\beta,T}\leq 2^\gamma R_0(\lambda^n+\cdots+1)\leq \frac{2^\gamma R_0}{1-\lambda}\fperiod 
\llabel{EQ144}\end{equation}
Imposing $\lambda< 1-4R_0/R$, i.e.,
\begin{equation}
\beta > 2^{2\gamma+1} \left(\frac{R}{R-4R_0}\right)^2 A^2 R
\fcomma
\label{fourth.cond.onbeta}\end{equation}
we derive a bound on the analytic norm of  ${\bu^{n+1}}$, which reads
\begin{equation}
|\bu^{n+1}|^\text{D}_{m-1,\theta_0,\beta,T}<\frac{R}{2}
\fperiod
\llabel{EQ146}\end{equation}
Through a standard energy estimate and applying the above bound on\\$|\bu^{n+1}|^\text{D} _{m-1,\theta_0,\beta,T}$, it is easy to show that 
\begin{equation}
\|\bu^{n+1}\|_{m-1,a,T}<\frac{R}{2}\fcomma 
\llabel{EQ147}\end{equation}
so that the bound $|||\bu^{n+1}|||_{m-1,\theta_0,\beta,a,T}<R $ follows. 
\end{proof}

\subsection{Boundedness of the sequence $\bu^n$ in the $|||\cdot|||_m$-norm}

We have proven the boundedness of $\bu^{n+1}$ in the $(m-1)$-norms. 
We now prove an estimate on  $\bu^{n+1}$ in the $m$-norms. 
\cole
\begin{proposition}
	\label{P06_improved}
Suppose that $|||\buin|||_{m,\theta_0,a}<R_0$ and $|||\bu^n||||_{m,\theta_0,\beta,a,T}<R$ with $R>4R_0$. 
Then, for $\beta$ sufficiently large,  $|||\bu^{n+1}||||_{m,\theta_0,\beta,a,T}<R$.
\end{proposition}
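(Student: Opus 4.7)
The plan is to bootstrap from the $(m-1)$-bound of Proposition~\ref{P06} to the full $m$-bound by a direct invocation of the analytic-Sobolev framework developed in Section~5, applied to the iteration at level~$m$ rather than at level~$m-1$. The key observation is that $\bu^{n+1}$ is the solution of the linearized Euler system~\eqref{transport.1}--\eqref{transport.3} with transporting field $\bv=\bu^n$, and the hypothesis $|||\bu^n|||_{m,\theta_0,\beta,a,T}<R$ is precisely the input required by Proposition~\ref{prop:ep_uniform_est}. Unlike in the proof of Proposition~\ref{P06}, one cannot use contractiveness of the differences $\bz^{n+1}$ at level~$m$ because the difference equation~\eqref{Diff} loses one derivative through the Cauchy estimate; a direct a~priori estimate of $\bu^{n+1}$ is therefore called for.

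First, I would apply Lemma~\ref{analy_estimate} (analytic estimate) and Lemma~\ref{Sobol_estimate} (Sobolev estimate) at level~$m$ with $\bv=\bu^n$. Combining them exactly as in the proof of Proposition~\ref{prop:ep_uniform_est}, they yield the $\epsilon$-uniform bound
\begin{equation*}
|||\bu^{n+1}|||^{(\gamma)}_{m,\theta_0,\beta,a,T}\leq K(R)\,|||\buin|||_{m,\theta_0,a}\leq K(R)\,R_0.
\end{equation*}
Since $C(R,\beta)$ from Lemma~\ref{analy_estimate} carries a factor $1/\beta$ and $D(R,T)$ from Lemma~\ref{Sobol_estimate} carries a factor $1/\sqrt{\beta}$, the closing factor $\bigl(1-C(R,\beta)-C(R,\beta)D(R,\beta)\bigr)^{-1}$ can be forced below~$2$ by taking $\beta$ large. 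For the same reason, the Sobolev part $e^{c_{\text{Sa}}RT}$ with $T\le(\theta_0-\bar\theta)/\beta$ small approaches~$1$, so $K(R)$ is driven to a universal constant close to~$2$.

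To upgrade this $\gamma$-weighted bound to the unweighted D-norm demanded by the statement, I would invoke the Asano-type inequality~\eqref{Asano_estimate_I||} in the form $|||f|||^{\text{D}}_{m,\theta_0,\beta,a,T}\leq(1-\beta_\ast/\beta)^{-\gamma}|||f|||^{(\gamma)}_{m,\theta_0,\beta_\ast,a,T}$ with an auxiliary $\beta_\ast<\beta$; the natural choice $\beta_\ast=\beta/2$ gives prefactor~$2^{\gamma}$. Applying the closure of the previous step at $\beta_\ast$ in place of $\beta$ and combining yields
\begin{equation*}
|||\bu^{n+1}|||_{m,\theta_0,\beta,a,T}\leq 2^{\gamma}K(R)R_0,
\end{equation*}
which is strictly less than $R$ once $R>4R_0$ and $\beta$ is large enough to force $K(R)\leq 2$, since $2^{\gamma+1}<4$ for $\gamma<1$. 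This closes the induction.

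The main obstacle is that the D-norm is monotonically decreasing in $\beta$, so the hypothesis $|||\bu^n|||_{m,\theta_0,\beta,a,T}<R$ does \emph{not} automatically transfer to $|||\bu^n|||_{m,\theta_0,\beta_\ast,a,T}<R$ at the smaller $\beta_\ast$ used in the Asano step. This mismatch has to be resolved by simultaneously shortening the stopping time (replacing $T$ by some $T_\ast<T$) to compensate for the wider strip at~$\beta_\ast$, in the same bookkeeping spirit as the sequence $\beta_n\nearrow\beta$ employed in Proposition~\ref{P06}. Once this parameter matching is carried out carefully, the estimate above goes through uniformly in $n$, yielding the desired boundedness at level~$m$.
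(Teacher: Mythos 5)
Your strategy---run the linear theory of Section~5 at level $m$ with $\bv=\bu^{n}$ to get a $\gamma$-weighted bound, then convert to the unweighted norm via an Asano-type inequality---is genuinely different from the paper's, and the conversion step is where it breaks. The inequality \eqref{Asano_estimate_I||} controls $|||f|||_{m,\theta_0,\beta,a,T}$ only by $(1-\beta_\ast/\beta)^{-\gamma}\,|||f|||^{(\gamma)}_{m,\theta_0,\beta_\ast,a,T}$ with a \emph{strictly smaller} rate $\beta_\ast<\beta$, so you must first close the weighted estimate at $\beta_\ast$, which requires the inductive hypothesis $|||\bu^{n}|||_{m,\theta_0,\beta_\ast,a,T_\ast}<R$ at that smaller rate. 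As you note, this does not follow from the hypothesis at $\beta$; but your proposed remedy of shortening the stopping time cannot repair it: for any fixed $t>0$ one has $\theta_0-\beta_\ast t>\theta_0-\beta t$, and the inductive information only places $\bu^{n}(t)$ in $H^{m}_{\theta_0-\beta t,\text{D}}$, so $|\bu^{n}(t)|^\text{D}_{m,\theta_0-\beta_\ast t}$ may not even be finite---no choice of $T_\ast>0$ changes this. The alternative of propagating the bound along an increasing sequence of rates $\beta^{(n)}\nearrow\beta$ also fails: each Asano step costs a factor $(1-\beta^{(n)}/\beta^{(n+1)})^{-\gamma}\sim 2^{\gamma n}$, and for the iterates themselves (unlike for the differences $\bzn$ in Proposition~\ref{P06}) there is no geometric contraction to absorb this growth; the contraction is available only at level $m-1$, as you yourself observe at the outset.

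The paper avoids this entirely by keeping Proposition~\ref{P06} as an essential input rather than bypassing it. It performs the $H^m_a$ energy estimate on \eqref{EQ37} and bounds the boundary terms at $y=a$---which involve up to $m+1$ derivatives of $\bu^{n+1}$---by the $(m-1)$-\emph{analytic} norm of $\bu^{n+1}$ through the double Cauchy estimate of Lemma~\ref{lemma:two_times_Caucht}. Since $|||\bu^{n+1}|||_{m-1,\theta_0,\beta,a,T}<R$ is already known from Proposition~\ref{P06}, the Gronwall forcing becomes an explicit known quantity of size $\cSobCau R\,(R/\bar{\theta}^{2})^{2}$ integrated over the short time $T\leq\theta_0/\beta$, so no weighted norms or rate changes are needed for the Sobolev part; condition \eqref{fifth.cond.onbeta} on $\beta$ then gives $\|\bu^{n+1}\|_{m,a,T}<R/2$, after which the analytic part is read off from \eqref{EQ41}. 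Your proposal uses neither Proposition~\ref{P06} nor Lemma~\ref{lemma:two_times_Caucht}, which are the actual engine of the bootstrap. A secondary concern: even granting the weighted bound, the claim $K(R)\leq 2$ is optimistic, since $K(R)=\max(2K'(R),K''(R))$ with $K'(R)\geq c$ for the (not necessarily unit) constant $c$ in \eqref{est:bureg_unif}, and the combined norm is a sum of two pieces each bounded by $K(R)R_0$; the margin against $R>4R_0$ is therefore not secured without additional care.
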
   
\colb

To show a bound of the $m$-norm, we first prove an estimate of the Sobolev norm. 
This is accomplished with a standard energy estimate in~$H^m_a$. However, this leads to the appearance of $m$ derivatives of $\bu^{n+1}$
at $y=a$.
The estimate of these terms is achieved by using the analyticity of~$\bu^{n+1}$. To be more precise, we use fact that $\bu^{n+1}  \in H^{m-1}_{\bar{\theta}}$
and  the Cauchy estimate twice to get a bound of the $D^m$-derivatives at $y=a$ in terms of the norm in~$H^{m-1}_{\bar{\theta}}$. 
The main technical step is contained in the following lemma. 

\cole
\begin{lemma} \label{lemma:two_times_Caucht}
Let $f\in H^{l}_{\theta}\cap H^l_a$.  Then, if $i\leq l$ and $\,\bar{\theta}<\theta/2$, we have
the estimate
	\begin{equation}
	\int \bigl| \pardx^{l-i}\pardy^{i+1} f(x, y=a)\bigr|^2dx\leq \cCautwo \left( \frac{|f|^\text{D}_{l,2 \bar{\theta}}}{\bar{\theta}^2}\right)^2
	\fperiod
	\llabel{EQ148}\end{equation} 
\end{lemma}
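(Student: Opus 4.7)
The plan is to bound the $L^2_x$ norm of the order-$(l+1)$ derivative at $y=a$ via the area version of the Cauchy estimate, and then to reduce the resulting area integral to integrals over the paths $\Gamma(\theta')$ that appear in the definition of the diamond norm. Set $h=\pardx^{l-i}\pardy^{i}f$, which has total order of differentiation~$l$, so that $|\pardy h|$ evaluated at $y=a$ is exactly the quantity we wish to control, and observe that any zeroth-order $\text{D}$-norm of $h$ is dominated by $|f|^\text{D}_{l,2\bar\theta}$.

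Since $\bar\theta<\theta/2$ and $a=1/2$ is an interior point of $D_{2\bar\theta}$, there exists $c>0$ such that the disk $B(a,c\bar\theta)$ lies inside $D_{2\bar\theta}$. Applying the Cauchy integral representation $\pardy h(x,a)=(2\pi i)^{-1}\oint_{|y-a|=r}h(x,y)(y-a)^{-2}\,dy$ pointwise in $x$, then using Cauchy--Schwarz and averaging the radius $r$ over $[c\bar\theta/2,c\bar\theta]$, one obtains the area-$L^2$ Cauchy estimate
\begin{equation*}
|\pardy h(x,a)|^2 \leq \frac{C}{\bar\theta^{4}}\int_{B(a,c\bar\theta)} |h(x,y)|^2\,dA(y)\fperiod
\end{equation*}
The factor $\bar\theta^{-4}=\bar\theta^{-2}\cdot\bar\theta^{-2}$ reflects the two uses of the Cauchy estimate: one to gain the additional $y$-derivative and one to pass from circle integrals to the area integral. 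Integrating in $x$ and applying Fubini then gives
\begin{equation*}
\int_\mathbb{R}|\pardy h(x,a)|^2\,dx \leq \frac{C}{\bar\theta^{4}}\int_{B(a,c\bar\theta)}\|h(\cdot,y)\|_{L^2_x}^2\,dA(y)\fperiod
\end{equation*}

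It remains to dominate the area integral by $(|f|^\text{D}_{l,2\bar\theta})^2$. The key geometric fact is that, near $y=a$, the paths $\Gamma(\theta')$ sweep out the disk $B(a,c\bar\theta)$ as $\theta'$ varies in an interval of length $O(\bar\theta)$: parametrizing $y=t(1+i\tan\theta')$ with $t$ close to $a$, the Jacobian is bounded and $dA\leq C\,d\theta'\,|dy|_{\Gamma(\theta')}$. Dropping the weight $e^{2\rho_{2\bar\theta}|\xi|}\geq 1$ in~\eqref{EQ101}, one has, for each $\theta'$ in the relevant range, $\int_{\Gamma(\theta')}\|h(\cdot,y)\|_{L^2_x}^2\,|dy|\leq(|f|^\text{D}_{l,2\bar\theta})^2$. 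Integrating in $\theta'$ contributes an extra factor $O(\bar\theta)$, so the area integral is bounded by $C\bar\theta(|f|^\text{D}_{l,2\bar\theta})^2$. Combining with the previous display yields the desired estimate (with room to spare, since the final rate $\bar\theta^{-3}$ is absorbed into $\cCautwo\bar\theta^{-4}$ using $\bar\theta<\pi/4$).

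The main technical obstacle is the parametrization of $B(a,c\bar\theta)$ by the $\Gamma(\theta')$-family: one must verify that the disk fits inside the cone $|\Im y|<\Re y\,\tan(2\bar\theta)$ near $\Re y=1/2$ (which forces $c$ to be chosen small depending on $\theta_0$), and that the negative-$\theta'$ portion of the disk is covered as well, either by Schwarz reflection (using analyticity of $f$ in the full symmetric region $D_{2\bar\theta}$) or by noting the symmetry $y\mapsto\bar y$ built into $D_{2\bar\theta}$. Once these are in place, the proof is a straightforward combination of the two Cauchy estimates and the change-of-variables $(\theta',t)\mapsto y$ near~$a$.
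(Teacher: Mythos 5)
Your proof is correct, but it takes a genuinely different route from the paper's. The paper's argument (given for $l=0$) controls the trace at $y=a$ by the one-dimensional Agmon/Sobolev embedding in the normal variable, $\sup_{|y-a|\leq\bar{\theta}/2}|\pardy f(x,y)|\lesssim \|\pardy f(x,\cdot)\|_{L^2}+\|\pardyy f(x,\cdot)\|_{L^2}$ on the real interval around $a$, and then trades the two extra $y$-derivatives for factors of $\bar{\theta}^{-1}$ via the Cauchy estimate applied at the level of the D-norms, $|\pardy f|^\text{D}_{\bar{\theta}}\leq |f|^\text{D}_{2\bar{\theta}}/\bar{\theta}$ — whence the $\bar{\theta}^{-2}$ inside the square. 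You instead use the sub-mean-value (area) form of the Cauchy integral formula to bound $|\pardy h(x,a)|^2$ by $\bar{\theta}^{-4}$ times an area integral over a disk $B(a,c\bar{\theta})$, and then foliate that disk by the paths $\Gamma(\theta')$, which is a correct coarea computation (the Jacobian of $(t,\theta')\mapsto t(1+i\tan\theta')$ is $t\sec^2\theta'$, comparable to $|dy|\,d\theta'$ near $\Re y=1/2$) and even yields the slightly better rate $\bar{\theta}^{-3}$. What each approach buys: the paper reuses machinery already present (the D-norm Cauchy estimate of Lemma~\ref{P02}) and avoids any geometric bookkeeping, while your version is self-contained and makes the dependence on the geometry of $D_{2\bar{\theta}}$ explicit. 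The one issue you flag — that the disk's lower half is not swept by the paths $\Gamma(\theta')$ with $\theta'\geq0$ as written in \eqref{EQ101} — is real, but it afflicts the paper's proof equally (its Cauchy estimate for $\pardyy f$ at real points also requires a full disk), and is resolved exactly as you say, by the conjugation symmetry of $D_\theta$ together with reality of the velocity field on the real axis (or by reading the sup in \eqref{EQ101} as ranging over $|\theta'|\leq\theta$). No gap.
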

\colb

\begin{proof}[Proof of Lemma~\ref{lemma:two_times_Caucht}]
For simplicity, we provide the estimate when $l=0$.
Then
\begin{align}
	\begin{split}
		\left| \pardy f(x, y=a)\right| &\leq \sup_{|y-a|\leq \bar{\theta}/2}\left|   \pardy f(x, y)\right|  \nonumber 
\\&
   \leq \left(\int_{|y-a|\leq \bar{\theta}/2} |  \pardy f|^2 dy \right)^{1/2}
   +  \left(\int_{|y-a|\leq \bar{\theta}/2} |   \pardyy f|^2 dy \right)^{1/2} \nonumber \\
& \leq \frac{| f|^\text{D}_{\bar{\theta}} }{\bar{\theta}} + \frac{|  \pardy f|^\text{D}_{\bar{\theta}} }{\bar{\theta}} \nonumber 
\leq \frac{| f|^\text{D}_{\bar{\theta}} }{\bar{\theta}} + \frac{|   f|^\text{D}_{\bar{2\theta}} }{\bar{\theta}^2} \nonumber 
\leq c  \frac{| f|^\text{D}_{ \bar{2\theta}} }{\bar{\theta}^2} \fperiod 
\end{split}
\end{align}
	When $l>0$, the estimate is analogous. 
\end{proof}

\begin{proof}[Proof of Proposition~\ref{P06_improved}]
We now evaluate the $m$-Sobolev norm of~$\bu^{n+1}$.
Consider the first equation in~\eqref{EQ37}. Taking $l$ derivatives, where $0\leq l\leq m$, and then applying the scalar product in $L^2$ with $D^l \bu^{n+1}$,
we obtain
\begin{equation}
  \begin{split}
\frac{1}{2}\frac{d}{dt} \| \bu^{n+1}\|^2_{m,a}
&\leq   \cSob \|\bu^n\|_{m,a}\|\bu^{n+1}\|^2_{m,a} +
\int  v^n \left| D^m\bu^{n+1}\right|^2_{y=a} dx
\\&\indeq
+
\sum_l \int \left( D^l p^{n+1} \bn \cdot D^l \bu^{n+1}\right)_{y=a} dx
\fperiod 
 \end{split}
\llabel{EQ149}
\end{equation}
Then, using Lemma~\ref{lemma:two_times_Caucht}, we have
$$
\int  v^n \left| D^m\bu^{n+1}\right|^2_{y=a} dx\leq \cCautwo |\bu^n|^\text{D}_{m,\bar{\theta}} \left(  \frac{|\bu^{n+1}|^\text{D}_{m-1,\bar{\theta}}}{\bar{\theta}^2}\right)^2
\fperiod
$$
To bound the term resulting from the pressure, we proceed as in the estimates~\eqref{est::BT2} and~\eqref{est::pressure} and write
\begin{align}
  &	\sum_l \int \left( D^l p^{n+1} \bn \cdot D^l \bu^{n+1}\right)_{y=a} dx
					\leq \sum_l\left(\int _{\mathbb{R}} \left(D^l p^{n+1} \right)^2_{y=a}  dx\right)^{1/2} \left(\int _{\mathbb{R}} \left(D^l \bu^{n+1} \right)^2_{y=a}  dx\right)^{1/2} \nonumber \\
	& \leq \cSob \|p^{n+1}\|_{H^{m+1}(a-\bar{\theta},a+\bar{\theta})} \cdot \cCautwo \frac{|\bu^{n+1}|^\text{D}_{m-1,\bar{\theta}} }{\bar{\theta}^2}  \nonumber \\
	&  \leq \cSobCau \left( \| \bu^n \|_{H^m(a-\bar{\theta},a+\bar{\theta})}  \| \bu^{n+1} \|_{H^m(a-\bar{\theta},a+\bar{\theta})} +\|p\| _{L^2}\right) 
	 \frac{|\bu^{n+1}|^\text{D}_{m-1,\bar{\theta}} }{\bar{\theta}^2}  \nonumber \\
	 &  \leq \cSobCau\left(R \frac{|\bu^{n+1}|^\text{D}_{m-1,\bar{\theta}} }{\bar{\theta}} +  \|\bu^n\|_{H^2} \|\bu^{n+1}\|_{H^2} \right)  
	 \frac{|\bu^{n+1}|^\text{D}_{m-1,\bar{\theta}} }{\bar{\theta}^2}  \nonumber \\
	& \leq \cSobCau \left( R\frac{|\bu^{n+1}|^\text{D}_{m-1,\bar{\theta}} }{\bar{\theta}} + R  \left(\|\bu^{n+1}\|_{m-1,a} +|\bu^{n+1}|^\text{D}_{m-1,\bar{\theta}}\right)\right)  
	 \frac{|\bu^{n+1}|^\text{D}_{m-1,\bar{\theta}} }{\bar{\theta}^2}  \nonumber \\
	 & \leq \cSobCau R \left( \left(  \frac{|\bu^{n+1}|^\text{D}_{m-1,\bar{\theta}} }{\bar{\theta}^2}    \right)^2 +  \|\bu^{n+1}\|^2_{m-1,a} \right) \nonumber 
\fperiod
\end{align}
Then,
\begin{equation}
\frac{1}{2}\frac{d}{dt} \| \bu^{n+1}\|^2_{m,a}    \leq     \cSobCau R \|\bu^{n+1}\|^2_{m,a} +
\cSobCau R\left(  \frac{|\bu^{n+1}|^\text{D}_{m-1,\bar{\theta}}}{\bar{\theta}^2}\right)^2
\comma 0\leq t\leq T< (\theta_0-\bar{\theta})/\beta
\fcomma
\llabel{EQ150}\end{equation}
which, by using
$|||\bu^{n+1}|||_{m-1,\theta,\beta,a,T}<R$
and the Gronwall Lemma, we get
$$
\|\bu^{n+1}\|_{m,a,T}<\frac{R}{2}
\fcomma
$$
provided
\be 
\beta>R\frac{4\cSobCau  \theta_0 e^{2\cSobCau RT}}{\bar{\theta}^4} \label{fifth.cond.onbeta}
\fperiod
\ee

With the above bound on $\|\bu^{n+1}\|_{m,a,T}$ and using the equation~\eqref{EQ41}, one can immediately derive the analogous bound on the 
analytic norm, obtaining
$$
|\bu^{n+1}|^\text{D}_{m,\theta_0,\beta,T}<\frac{R}{2}
\fcomma
$$
which completes the proof of Proposition~\ref{P06_improved}.
\end{proof}

Propositions~\ref{P06_improved} and~\ref{prop:contractiveness}, stating the boundedness and the contractiveness of the sequence, respectively, 
prove Theorem~\ref{T02}.

\begin{remark}[Bounds on $\beta$] \label{rmk.bounds.onbeta}
Here, we summarize the bound that $\beta$ satisfies to be large enough so that Proposition~\ref{prop:contractiveness} 
	(contractiveness of $\bu^n$ in the $	|||\cdot |||^{(\gamma)}_{m-1,\theta_0,\beta_n,a,T}$-norm)
	and Proposition~\ref{P06_improved} (boundedness of $\bu^n$ in the $|||\cdot |||_{m,\theta_0,\beta,a,T}$-norm) hold. 
Namely, $\beta$ has to satisfy the bounds 
\eqref{first.cond.onbeta}, \eqref{second.cond.onbeta}, \eqref{third.cond.onbeta}, \eqref{fourth.cond.onbeta}, and~\eqref{fifth.cond.onbeta}.
\end{remark}

\cole
\begin{corollary}[Cauchy property in the $|||\cdot |||_{m-1,\theta_0,\beta,a,T }$-norm] \label{C01}
Suppose that we have $|||\buin|||_{m,\theta_0,a}<R_0$. Then, assuming that $\beta$ is large enough, $R>4R_0$, and $T<(\theta_0-\bar{\theta})/\beta$ with $\bar{\theta}<\theta_0$, we have
 \be	|||\bznpo|||_{m-1,\theta_0,\beta,a,T} \leq c \lambda^n
 \fcomma
 \label{EQ143_bis}
 \ee
with  $\lambda<1$.
\end{corollary}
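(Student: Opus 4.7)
The plan is to convert the contractiveness in the $\gamma$-weighted norm from Theorem~\ref{T02}(2.a) into a bound in the unweighted combined norm by paying a price that grows only like $2^{\gamma(n+1)}$, which is dominated by the geometric decay $L^n$ when $\beta$ is chosen large enough.

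First, I would invoke the Asano-type comparison~\eqref{Asano_estimate_I||} with the larger parameter $\beta$ and the smaller parameter $\beta_n = \beta(1-1/2^{n+1})$. Since $1 - \beta_n/\beta = 1/2^{n+1}$, this yields
\begin{equation*}
|||\bznpo|||_{m-1,\theta_0,\beta,a,T}
\leq
\bigl(1-\beta_n/\beta\bigr)^{-\gamma} |||\bznpo|||^{(\gamma)}_{m-1,\theta_0,\beta_n,a,T}
=
2^{\gamma(n+1)} |||\bznpo|||^{(\gamma)}_{m-1,\theta_0,\beta_n,a,T}.
\end{equation*}

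Next, I would iterate the contractiveness property (2.a) of Theorem~\ref{T02} a total of $n$ times, at the fixed shrinking speed $\beta_n$, to get
\begin{equation*}
|||\bznpo|||^{(\gamma)}_{m-1,\theta_0,\beta_n,a,T}
\leq L^n \, |||\bz^1|||^{(\gamma)}_{m-1,\theta_0,\beta_n,a,T}.
\end{equation*}
Using $\bz^1 = \bu^1 - \bu^0 = \buin$ together with the hypothesis $|||\buin|||_{m,\theta_0,a} < R_0$, the right-hand side is bounded by $L^n R_0$. Combining both inequalities,
\begin{equation*}
|||\bznpo|||_{m-1,\theta_0,\beta,a,T} \leq 2^\gamma \bigl(2^{\gamma}L\bigr)^n R_0,
\end{equation*}
so it suffices to enlarge $\beta$ (adding one further constraint to the list in Remark~\ref{rmk.bounds.onbeta}) until $L$ is so small that $\lambda := 2^\gamma L < 1$; then the corollary follows with $c = 2^\gamma R_0$.

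The main subtlety I anticipate is justifying the iterated use of (2.a) at a single fixed parameter $\beta_n$ rather than at the varying parameters $\beta_k$, which is what the statement of Proposition~\ref{prop:contractiveness} literally gives. This is the same maneuver already performed in the proof of Proposition~\ref{P06} and it is legitimate because the proof of Proposition~\ref{prop:contractiveness} only requires the shrinking speed to be large enough and the iterates to satisfy the boundedness condition $|||\bu^k|||_{m,\theta_0,\beta,a,T} < R$ for every $k$ in the chain; both of these are guaranteed by Proposition~\ref{P06_improved} and the lower bound $\beta_n \geq \beta/2$, so the constant $L$ can be chosen uniformly along the iteration.
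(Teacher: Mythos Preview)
Your argument is correct and follows essentially the same route as the paper: the paper simply points back to the chain of inequalities established in the proof of Proposition~\ref{P06} (culminating in~\eqref{EQ143}) for the analytic part and remarks that the Sobolev part contracts as well, whereas you run the identical computation on the full combined norm via~\eqref{Asano_estimate_I||}. One small remark: the ``further constraint'' you propose on $\beta$ to force $2^{\gamma}L<1$ is not actually new---it is precisely condition~\eqref{third.cond.onbeta}, already listed in Remark~\ref{rmk.bounds.onbeta}.
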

\colb

\begin{proof}[Proof of Corollary~\ref{C01}]
	It is sufficient to look at the estimate \eqref{EQ143} to see that one has the inequality \eqref{EQ143_bis} for  the $|\cdot|^\text{D}_{m-1,\theta_0,\beta,T} $-norm. 
	We already have the contractiveness in the $\|\cdot\|_{m-1,a}$-norm, and the estimate \eqref{EQ143_bis} is therefore proven. 
%	The estimate \eqref{EQ143_bis} thus follows. 
\end{proof}

\subsection{Conclusion of the proof of Theorem~\ref{T01} on the existence and uniqueness for the Euler solution}\label{Conclusion}

Given $\bu^n$, Theorem~\ref{T.linear} shows that one can define $\bu^{n+1}$ as the solution of the problem~\eqref{EQ41}. 
The part~(1) of Theorem~\ref{T02} ensures that $\bu^n$ remains bounded in the norm $|||\cdot|||_{m,\theta_0,\beta, a, T}$ within a ball of radius 
$R>4R_0$, where $R_0$ is the size of the initial datum.
This ensures that the sequence $\{\bu^n\}$ is well-defined. 

Corollary~\ref{C01} ensures that $\{\bu^n\}$ is Cauchy in the norm
$|||\cdot |||_{m-1, \theta_0, \beta,a,T}$ and, therefore, it converges
to $\bu$ in the same norm. Clearly, $\bu$ solves Euler equations.

To obtain uniqueness, assume that $\bu$ and $\bv$ are two solutions
of the Euler equations. Note that both have enough regularity to apply
the standard Euler uniqueness. We namely subtract the equations
for $\bu$ and $\bv$ and test the resulting equation
with $\bu-\bv$. We omit further details.

\section{Proofs of the analytic estimates} 
\label{section07}

\subsection{Projection operator estimate: proof of Proposition~\ref{prop_projection} } \label{subsect_projection_est}

In the explicit expressions \eqref{Pplus_x} and~\eqref{Pplus_y}, one recognizes that we need to show the analyticity in $D_\theta$ and the estimates for
 terms of the form
\begin{equation}
  \begin{split}
  &
 \int_y^\infty \,dy' |\xi|e^{|\xi|(y-y')} f(\xi,y'), \quad  \int_0^y \,dy' |\xi| e^{-|\xi|(y-y')} f(\xi,y'),
  \\&\indeq
   \quad  \int_0^y \,dy' |\xi| e^{-|\xi|(y+y')} f(\xi,y')
   \fcomma
  \end{split}
   \llabel{EQ63}
   \end{equation}
for $y\in D_\theta$.
Therefore, the proof of Proposition~\ref{prop_projection} is an immediate consequence of the following lemma.

\cole
\begin{lemma}
\label{est_proj}
Assume that $f\in H^m_{\theta}\cap H^m_a$.
Then $ \int_y^\infty \,dy' |\xi|e^{|\xi|(y-y')} f(\xi,y')$,
\\
$ \int_0^y \,dy' |\xi|e^{-|\xi|(y-y')} f(\xi,y')$, and 
$ \int_0^y \,dy' |\xi| e^{-|\xi|(y+y')} f(\xi,y')$ belong to~$H^m_{\theta}$.   
Moreover, we have the estimates
  \begin{align}
   &
   \left| \int_y^\infty \,dy' |\xi|e^{|\xi|(y-y')} f(\xi,y')\right|_{m,\theta} \leq c |f|_{m,\theta}
   \fcomma\label{est_proj_1}
   \\&
   \left| \int_0^y \,dy' |\xi| e^{-|\xi|(y-y')} f(\xi,y')\right|_{m,\theta} \leq c |f|_{m,\theta}\fcomma \label{est_proj_2}
   \\&
   \left| \int_0^y \,dy' |\xi| e^{-|\xi|(y+y')} f(\xi,y')\right|_{m,\theta} \leq c |f|_{m,\theta}
  \fperiod
  \label{est_proj_3}
  \end{align}
\end{lemma}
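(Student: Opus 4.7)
My plan is to prove the three bounds in a unified way by treating each integrand as a convolution in $y$ against an exponential kernel $|\xi|e^{-|\xi|s}$, then transferring the analytic weight using the Lipschitz property of $\rho_\theta$ and splitting the $y'$-integration into an analytic part in $D_\theta$ and a Sobolev tail in $[1+\theta,\infty)$.

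\medskip

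\textbf{Step 1 (Analyticity in $D_\theta$).} For each of the three integrals, I would show analyticity in $y\in D_\theta$ by contour deformation. With $y$ fixed on $\Gamma(\theta')$ for some $\theta'\le\theta$, I deform the path of $y'$-integration to consist of two pieces: the portion of $\Gamma(\theta')$ from $y$ (or from $0$) to the endpoint $1+\theta'$, followed by the real segment $[1+\theta',\infty)$. Since $f$ is analytic in $D_\theta$, the first piece can be deformed inside the region of analyticity; on the second piece, $f$ is only Sobolev, but the kernels $|\xi|e^{|\xi|(y-y')}$ etc.\ are entire in $y$ and $y'$ and decay strongly in $\Re(y-y')\to-\infty$, which justifies the deformation and gives analytic dependence on $y$ by differentiation under the integral.

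\textbf{Step 2 (Reduction of derivatives).} Because $\partial_x$ corresponds to multiplication by $i\xi$ and commutes with the integral operators, and $\partial_y$ can be distributed through the integrals (picking up harmless boundary terms at $y$ or at $y=0$), it suffices to establish the $L^2$-bound \eqref{est_proj_1}--\eqref{est_proj_3} for $m=0$ applied to $\partial_x^i\partial_y^j f$ for each $i+j\le m$, then sum.

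\textbf{Step 3 (Weighted $L^2$ bound via Young's inequality).} I focus on \eqref{est_proj_1}; the others are similar, with $e^{-|\xi|(y+y')}$ providing even stronger decay since $y,y'\ge 0$. Substituting $s=y'-y$ writes the operator as convolution against $\varphi_\xi(s)=|\xi|e^{-|\xi|s}\mathbb{1}_{s\ge 0}$, which has $\|\varphi_\xi\|_{L^1}=1$. For a fixed contour $\Gamma(\theta')$, I write
\[
e^{\rho_\theta(\Re y)|\xi|}\,|\xi|\,\bigl|e^{|\xi|(y-y')}\bigr|\;=\;|\xi|\,e^{(\rho_\theta(\Re y)-\rho_\theta(\Re y'))|\xi|}\,e^{-|\xi|(\Re y'-\Re y)}\,e^{\rho_\theta(\Re y')|\xi|},
\]
extending $\rho_\theta$ by $0$ past $1+\theta$. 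The key observation is that $\rho_\theta$ so extended is $1/2$-Lipschitz on $\mathbb{R}_+$, so the exponent in the middle factor is bounded by $|\xi|(\Re y'-\Re y)/2$, which combined with the explicit decay yields the effective kernel $|\xi|e^{-|\xi|(\Re y'-\Re y)/2}$ with $L^1$-norm $2$ uniformly in $\xi$. Young's inequality then gives
\[
\Bigl(\int_{\Gamma(\theta')}|dy|\,e^{2\rho_\theta(\Re y)|\xi|}|(\mathrm{op}\,f)(\xi,y)|^2\Bigr)^{1/2}
\le 2\Bigl(\int_{\Gamma(\theta')\cup[1+\theta',\infty)}|dy'|\,e^{2\rho_\theta(\Re y')|\xi|}|f(\xi,y')|^2\Bigr)^{1/2}.
\]
On the portion $\Re y'\le 1+\theta$ the right-hand side is controlled by $|f|_\theta$; on the tail $\Re y'\ge 1+\theta$ the weight equals $1$ and we bound it by $\|f\|_a$ (recall $a=1/2<1+\theta$). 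Integrating in $\xi$ and taking supremum over $\theta'\le\theta$ produces the claim.

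\medskip

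\textbf{Main obstacle.} The single delicate point is the passage across the ``corner'' of $\rho_\theta$ at $\Re y=1$ and the joining of the analytic portion of the $y'$-contour with the real tail at $\Re y'=1+\theta'$. Once $\rho_\theta$ is extended by zero past $1+\theta$ and verified to remain $1/2$-Lipschitz on all of $\mathbb{R}_+$, the weighted Young estimate proceeds uniformly, and the tail contribution is absorbed into $\|f\|_{m,a}$; everything else is bookkeeping over $i+j\le m$ and over $\theta'\le\theta$.
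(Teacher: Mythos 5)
Your proof is correct and follows essentially the same route as the paper's: both reduce each term to a convolution in $\Re y$ against a kernel of the form $|\xi|e^{-|\xi|s/2}$ after transferring the weight $e^{\rho_\theta|\xi|}$ from $y$ to $y'$, and then apply Young's inequality; your observation that the extension of $\rho_\theta$ by zero is $1/2$-Lipschitz is simply a cleaner packaging of the exponent inequalities the paper verifies piece by piece on $\Gamma_1\times\Gamma_1$, $\Gamma_1\times\Gamma_2$, $\Gamma_2\times\Gamma_2$, and the real tail. One small point in your favor: your right-hand side correctly carries the Sobolev contribution $\|f\|_{m,a}$ coming from $\Re y'\geq 1+\theta$, which the lemma's stated bound $c\,|f|_{m,\theta}$ omits but which is genuinely needed there (and is consistent with the combined norm appearing in Proposition~\ref{prop_projection}).
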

\colb

To prove the above lemma, we need to show first that the above three terms are $y$-analytic in~$D_\theta$.
Given that  $D_\theta$ is open when $y\in D_\theta$,
a whole neighborhood of $y$ is contained  in~$D_\theta$. 
This allows us to compute the complex derivative of $\mathbb{P} \bu$.
For example, one easily gets
\begin{equation}
\pardy \int_y^\infty \,dy' |\xi|e^{|\xi|(y-y')} u(\xi,y') = -|\xi|u(\xi,y) -|\xi|^2 \int_y^\infty \,dy' e^{|\xi|(y-y')} u(\xi,y')
   \fcomma
   \llabel{EQ64}
   \end{equation}
which, given the exponential decay  in the $\xi$-variable of $u(\xi,y)$,  shows that the terms to be estimated are holomorphic in~$D_\theta$. 

Second, we need to estimate the norm of the three terms in~$H^m_{\theta}$. We shall see how to obtain the inequality \eqref{est_proj_2} and comment on the others.
Given $\theta\in(-\pi/2,\pi/2)$, denote
$\Gamma_1(\theta)=(0,1+i\tan \theta)$, i.e, $\Gamma_1(\theta)$ is the line in the complex plane
between $0$ and $1+i\tan \theta$; also,
let $\Gamma_2=(1+i\tan \theta, 1+\theta)$.
We  fix $\theta$, and we estimate the integral
  \begin{equation}
   I
   =
   \int_{-\infty}^{\infty}\,d\xi
    \int \,|\,dy|
   \left|
    \int_{0}^{y}
    \,dy'
    |\xi| e^{-|\xi|(y'-y_0)} f(\xi,y')
   \right|^2
   e^{2\rho_{\theta}(y)|\xi|}
   \fcomma
   \label{EQ106}
  \end{equation}
for $y\in \Gamma_1(\theta)$ first.
To treat it,
we parameterize $y$ and $y'$ by
$y=\alpha+i\alpha \tan\theta$
and
$y'=\alpha'+i\alpha' \tan\theta$,
respectively, and estimate
  \begin{align}
  \begin{split}
   I
    &=
    \int_{-\infty}^{\infty}
     \,d\xi
     \int_{0}^{1}\,d\alpha\sqrt{1+\tan^{2}\theta}
     \biggl|
      \int_{0}^{\alpha}
       \,d\alpha'
       |\xi| 
       (1+i\tan\theta)
       e^{-|\xi|(\alpha-\alpha')(1+i\tan\theta)}
    \\&\indeq\indeq\indeq\times
       f(\xi,\alpha'+i\alpha'\tan\theta)
  \biggr|^2 e^{\theta|\xi|}
   \\&
   \leq
  c ({1+\tan^{2}\theta})
   \int_{-\infty}^{\infty}
     \,d\xi
     \int_{0}^{1}\,d\alpha
     \left|
      \int_{0}^{\alpha}
       \,d\alpha'
       |\xi| 
       e^{-|\xi|(\alpha-\alpha')}
       |f(\xi,\alpha'+i\alpha'\tan\theta)|
  \right|^2 e^{\theta|\xi|}
  \\&
  =
   c({1+\tan^{2}\theta})
   \int_{-\infty}^{\infty}
     \,d\xi
     \int_{0}^{1}\,d\alpha
     \left|
      \int_{0}^{\alpha}
       \,d\alpha'
       |\xi| 
       e^{-|\xi|(\alpha-\alpha')}
       |\tilde f(\xi,\alpha')|
  \right|^2 e^{\theta|\xi|}
  \fcomma
  \end{split}
   \label{EQ107}
  \end{align}
where we abbreviate
$\tilde f(\xi,\alpha')=f(\xi,y')$ (where always $\alpha'=\Re {y'}$). For simplicity of
notation, we always assume
$\tilde f(\xi,\alpha')=0$ if $\alpha'<0$ or $\alpha'\geq 1+\theta$.
Also, let $K(\beta,\xi)=|\xi| e^{-\beta|\xi|}\chi_{(0,\infty)}(\beta)$.
Then,
  \begin{align}
  \begin{split}
   I
   &\lec_{\theta}
   \int_{-\infty}^{\infty}
    \,d\xi
     \int_{0}^{1}\,d\alpha
     \left|
      \int_{0}^{\alpha}
       \,d\alpha'
        K(\alpha-\alpha',\xi)
       |\tilde f(\xi,\alpha')|
  \right|^2 e^{\theta|\xi|}
  \\&
  =
   \int_{-\infty}^{\infty}
    \,d\xi
     \int_{0}^{1}\,d\alpha
     \left|
      \int_{-\infty}^{\infty}
       \,d\alpha'
        K(\alpha-\alpha',\xi)
       |\tilde f(\xi,\alpha')|
  \right|^2 e^{\theta|\xi|}
  \\&
  \lec
   \int_{-\infty}^{\infty}
    \,d\xi
  \Vert K(\cdot,\xi)\Vert_{L^{1}}^2
  \int_{0}^{1}\,d\alpha'|\tilde f(\xi,\alpha')|^2
   e^{\theta|\xi|}
   \\&
   \lec
   \int_{-\infty}^{\infty}
    \,d\xi
   \int_{0}^{1}\,d\alpha'|\tilde f(\xi,\alpha')|^2
   e^{\theta|\xi|}
%   \\&
  \lec_{\theta}
   \int_{-\infty}^{\infty}
    \,d\xi
   \int_{0}^{1}\,|\,dy'| |f(\xi,y')|^2
   e^{\theta|\xi|}
   \lec |f|_{\theta}
   \fperiod
  \end{split}
   \label{EQ109}
  \end{align}
Next, we estimate the integral \eqref{EQ106} when $y\in \Gamma_2(\theta)$.
The path integral in $y'$ is divided into
two integrals corresponding to the lines $(0,1+i\tan\theta)$ and $(1+i\tan\theta,y)$; we denote
the corresponding quantities by $I_1$ and~$I_2$.
They are bounded in a similar manner; we show the details only for~$I_2$.
By parameterizing
$y=\alpha+i(1+\theta-\alpha)\tan \theta/\theta$ and
$y'=\alpha'+i(1+\theta-\alpha')\tan \theta/\theta$, we have
  \begin{align}
  \begin{split}
   I_2
    &\lec
    \int_{-\infty}^{\infty}
     \,d\xi
     \int_{\Gamma_2(\theta)}\,|dy|
     \left|
      \int_{1+i\tan\theta}^{y}
       \,dy' |\xi|
       e^{-|\xi|(y-y')}
       f(\xi,y')
  \right|^2 e^{2\rho_\theta(y)|\xi|}
   \\&
   \lec_{\theta}
    \int_{-\infty}^{\infty}
     \,d\xi
     \int_{1}^{1+\theta}\,d\alpha
     \left|
      \int_{1}^{\alpha}
       \,d\alpha'
       |\xi|e^{-|\xi|(\alpha-\alpha')}
       |\tilde f(\xi,\alpha')|
  \right|^2
  e^{|\xi|(1+\theta-\alpha)}
  \\&
  \lec
    \int_{-\infty}^{\infty}
     \,d\xi
     \int_{1}^{1+\theta}\,d\alpha
     \left|
      \int_{1}^{\alpha}
       \,d\alpha'
       |\xi|e^{-|\xi|(\alpha-\alpha')}
       |\tilde f(\xi,\alpha')|
        e^{|\xi|(1+\theta-\alpha')/2}
  \right|^2
   \lec
   |f|_{\theta}^2
  \fcomma
  \end{split}
  \label{EQ108}
  \end{align}
where we used $\alpha'\leq \alpha$ in the third inequality; 
in the step, we used Young's inequality, concluding similarly
to~\eqref{EQ109}.

Next, we estimate a more difficult term involving the integral in $y'$
from $y$ to~$\infty$, i.e., we intend to bound
  \begin{equation}
   I
   =
   \int_{-\infty}^{\infty}\,d\xi
   \int_{\Gamma_1\cup\Gamma_2} \,|\,dy|
   \left|
    \int_{y}^{\infty}
    \,dy'
    |\xi| e^{|\xi|(y-y')} f(\xi,y')
   \right|^2
   e^{2\rho_{\theta}(y)|\xi|}
  \fperiod
   \label{EQ110}
  \end{equation}
The term $I$ is less than or equal to 
$I_1+I_2$, where $I_1$ and $I_2$ correspond to the integrals in $y$ over $\Gamma_1(\theta)$
and $\Gamma_2(\theta)$, respectively.
Next, we have $I_1\leq I_{11} + I_{12} + I_{13} $, where the three integrals correspond
to integration in $y'$ over $(y,1+i\tan\theta)$,
$(1+i\tan\theta, 1+\theta)$, and $(1+\theta,\infty)$.
Similarly, $I_2\leq I_{21}+I_{22}$, where the two terms correspond to
$y'$ integrations in $(y,1+\theta)$ and $(1+\theta,\infty)$, respectively.
The integrals are estimated in a similar manner; here we show how to handle
$I_{21}$ and~$I_{12}$. 

We first estimate $I_{21}$ when  $y\in \Gamma_2(\theta)$ and $y'\in\Gamma(\theta,y)= (y,1+\theta)$. 
We parameterize the two paths as follows: 
$\Gamma_2(\theta)=\left\{  y=\alpha +i(1+\theta-\alpha)\tan\theta/\theta, \;\alpha\in(1,1+\theta)    \right\}$ 
and $\Gamma(\theta,y)=\left\{  y'=\alpha' +i(1+\theta-\alpha')\tan\theta/\theta, \;\alpha'\in(\alpha,1+\theta)    \right\}$. 
Then,
  \begin{align}
	\begin{split}
		I_{21}
		&=
   \int_{-\infty}^{\infty}
\,d\xi
\int_{\Gamma_2(\theta)}
|\,d y|
\left|
\int_{y}^{1+\theta}
\,dy'
|\xi|
e^{|\xi|(y-y')}
f(\xi,y')
e^{|\xi|\rho_{\theta}(y)}
\right|^2
\\&
=   \int_{-\infty}^{\infty}
\,d\xi
\int_{1}^{1+\theta}
\,d\alpha
\sqrt{1+(\tan\theta/\theta)^2}
\\&\indeq\indeq\indeq\indeq\times
\left|    
\int_{\alpha}^{1+\theta} 
\,d\alpha'(1-i\tan\theta/\theta)
|\xi| 
e^{|\xi|(\alpha-\alpha')(1+i\tan\theta/\theta)}
    |\tilde f(\xi,\alpha')|
    e^{|\xi|\rho_{\theta}(\alpha)}
\right|^2
   \\&
\lec_{\theta}
\int_{-\infty}^{\infty}
\,d\xi
\int_{1}^{1+\theta}
\,d\alpha
\left|
\int_{\alpha}^{1+\theta}
\,d\alpha'
|\xi|
e^{-|\xi|(\alpha'-\alpha)}
|\tilde f(\xi,\alpha')|
e^{|\xi|\rho_{\theta}(\alpha)}
\right|^2
\fperiod
  \end{split}
\label{EQ111}
\end{align}
Given that $\Re{y} \leq 1, \Re{y'}\geq 1$, we have $\rho_{\theta}(\alpha)=(1+\theta-\alpha)/2$ and $\rho_{\theta}(\alpha')=(1+\theta-\alpha')/2$, so that  
$		-(\alpha'-\alpha)
		-\rho_{\theta}(\alpha')
		+\rho_{\theta}(\alpha)
		%    \\&\indeq
		=
		-(\alpha'-\alpha)
		-(1+\theta-\alpha')/{2}
		+(1+\theta-\alpha)/{2}
		= (\alpha-\alpha')/{2}. 
$
Therefore, 
 \begin{align}\label{EQ146}
	\begin{split}
		I_{21}
		&\lec_{\theta}
		\int_{-\infty}^{\infty}
		\,d\xi
		\int_{1}^{1+\theta}
		\,d\alpha
		\left|
		\int_{\alpha}^{1+\theta}
		\,d\alpha'
		|\xi|
		e^{-|\xi|(\alpha'-\alpha)/2}
		|\tilde f(\xi,\alpha')|
		e^{|\xi|\rho_{\theta}(\alpha')}
		\right|^2
		  \\&
		\lec
		\int_{-\infty}^{\infty}
		\,d\xi
		\int_{0}^{\infty}
		\,d\alpha
		|\tilde f(\xi,\alpha)|^2
		e^{2|\xi|\rho_{\theta}(\alpha)}
		\\&
		\lec_{\theta}
		\int_{-\infty}^{\infty}
		\,d\xi
		\int_{0}^{\infty}
		\,|dy|
		|f(\xi,y)|^2
		e^{2|\xi|\rho_{\theta}(y)}
		\lec
		|f|_{\theta}^2
	\fperiod
\end{split}
\end{align}
We now pass to $I_{12}$ when  $y\in \Gamma_1(\theta)$ and $y'\in\Gamma(\theta,y)= (1+i\tan\theta,1+\theta)$.
We parameterize the two paths as $\Gamma_1(\theta)=\left\{  y=\alpha +i\alpha\tan\theta, \;\alpha\in(0,1)    \right\}$ 
and $\Gamma(\theta,y)]=\left\{  y'=\alpha' +i(1+\theta-\alpha')\tan\theta/\theta, \;\alpha'\in(1,1+\theta)    \right\}$. Then,
  \begin{align}
	\begin{split}
		I_{12}
		&=
		\int_{-\infty}^{\infty}
		\,d\xi
		\int_{\Gamma_1(\theta)}
		|\,d y|
		\left|
		\int_{1+i\tan\theta}^{1+\theta}
		\,dy'
		|\xi|
		e^{|\xi|(y-y')}
		f(\xi,y')
		e^{|\xi|\rho_{\theta}(y)}
		\right|^2
		\\&
		\lec_{\theta}
		\int_{-\infty}^{\infty}
		\,d\xi
		\int_{0}^{1}
		\,d\alpha
		\left|
		\int_{1}^{1+\theta}
		\,d\alpha'
		|\xi|
		e^{-|\xi|(\alpha'-\alpha)}
		|\tilde f(\xi,\alpha')|
		e^{|\xi|\rho_{\theta}(\alpha)}
		\right|^2
		\fperiod
		\end{split}
		\label{EQ120}
\end{align}
Given that  $\Re{y} \leq 1$ and  $\Re{y'}\geq 1$, we have $\rho_{\theta}(\alpha)=\theta/2$ and $\rho_{\theta}(\alpha')=(1+\theta-\alpha')/2$, and we can write  
$
		-(\alpha'-\alpha)
		-\rho_{\theta}(\alpha')
		+\rho_{\theta}(\alpha)
		=
		-(\alpha'-\alpha)
		-(1+\theta-\alpha')/{2}
		+{\theta}/{2}
		%    \\&\indeq
		= (\alpha-\alpha')/{2} + (\alpha-1)/{2}
		\leq (\alpha-\alpha')/{2}.
$
The rest of the estimate proceeds as in \eqref{EQ146} with minor modifications. Namely,
 \begin{align}
	\begin{split}
		I_{12}
		&\lec_{\theta}
		\int_{-\infty}^{\infty}
		\,d\xi
		\int_{0}^{1}
		\,d\alpha
		\left|
		\int_{1}^{1+\theta}
		\,d\alpha'
		|\xi|
		e^{-|\xi|(\alpha'-\alpha)/2}
		|\tilde f(\xi,\alpha')|
		e^{|\xi|\rho_{\theta}(\alpha')}
		\right|^2
		\\&
		\lec
		\int_{-\infty}^{\infty}
		\,d\xi
		\int_{0}^{\infty}
		\,d\alpha
		|\tilde f(\xi,\alpha)|^2
		e^{2|\xi|\rho_{\theta}(\alpha)}
		\\&
		\lec_{\theta}
		\int_{-\infty}^{\infty}
		\,d\xi
		\int_{0}^{\infty}
		\,|dy|
		|f(\xi,y)|^2
		e^{2|\xi|\rho_{\theta}(y)}
		\lec
		|f|_{\theta}^2
		\fperiod
	\end{split}
\end{align}
This concludes the proof of the Lemma~\ref{est_proj} and, therefore, of Proposition~\ref{prop_projection}. 
\colb
\subsection{Proof of Proposition~\ref{P01}}

To prove the proposition, we first analyze the case when  
 $u$ and $v$ are scalar functions; in this case we denote them by $f$ and $g$, respectively. 
 Therefore, we need to estimate terms of the type \eqref{est_proj_1}--\eqref{est_proj_3} with $f \pardx g$ instead of~$f$. How to treat derivatives with 
 respect to $y$ is addressed further below. 
We shall give a rather detailed proof for the first term, when integration in $y'$ goes from $y$ to~$\infty$.  Then,
we claim
  \be
     \left| \int_y^\infty \,dy' |\xi| e^{|\xi|(y-y')} \Bigl(f(x,y')\pardx g(x,y')\Bigr)^{\wedge}\right|_{m,\theta''} \leq c |f|_{m,\theta''}\frac{|g|_{m,\theta}}{\theta-\theta''}\, , \quad \theta''<\theta
     \fperiod
     \label{EQA96}
  \ee
Using the same ideas, one can estimate the other terms, where the
integration in $y'$ is over $(0,y)$.
To prove the estimate \eqref{EQA96}, we compute the $L^2$ norm along the path $\Gamma(\theta')$ (see \eqref{EQ86}) with $\theta'<\theta''$. 
We shall consider in detail the case the part of this path when $y\in(0,1+i\tan\theta')$ and begin estimating the term, denoted by~$I$, when $y'\in (y, 1+i\tan\theta')$. 
The case when $y'\in(1+i\tan\theta, 1+\theta)$ is analyzed below, see \eqref{EQ93}, while the case when $y'\in(1+\theta, \infty)$ is simpler and is omitted. Thus the term~$I$ reads
   \begin{align}
  \begin{split}
   &
   I=
   \int_{-\infty}^{\infty}
   \,d\xi
   \int_{0}^{1+i\tan\theta'}
   \,dy
   \\&\indeq\indeq\indeq\indeq\times
   \left|
    \int_{y}^{1+i\tan\theta'}
    \,dy'
    |\xi|
    e^{|\xi|e^{|\xi|(y-y')}}
    |\xi|^{m}
    \int^{\infty}_{-\infty}\,d\eta
    f(\xi-\eta)
    g(\eta)
    |\eta|
    e^{\rho_{\theta'}(y) |\xi|}
   \right|^2
  \\&\indeq
   \lec
   \int_{-\infty}^{\infty}\,d\xi
   \int_{0}^{1}
   \,d\alpha
   \left|
    \int_{\alpha}^{1}\,d\alpha'
    |\xi|
    e^{(\alpha-\alpha')|\xi|}
    |\xi|^{m}
    \int^{\infty}_{-\infty} \,d\eta
    \tilde f(\xi-\eta)
    |\eta|
    \tilde g(\eta)
    e^{\rho_{\theta'}(\alpha)|\xi|}
   \right|^2
   \\&\indeq
   \lec
   \int_{-\infty}^{\infty}
   \,d\xi
   \int_{0}^{1}\,d\alpha
   \\&\indeq\indeq\indeq\indeq\times
   \left|
    \int_{\alpha}^{1}\,d\alpha'
    |\xi|
    e^{(\alpha-\alpha')|\xi|}
   \int^{\infty}_{-\infty}\,d\eta
    |\xi-\eta|^{m}
  |  \tilde f(\xi-\eta) |
    |\eta|
  |  \tilde g(\eta) |
    e^{\rho_{\theta'}(\alpha)|\xi|}
   \right|^2
   e^{\theta'|\xi|/2}
   \\&\indeq\indeq
   +
   \int_{-\infty}^{\infty}
   \,d\xi
   \int_{0}^{1}\,d\alpha
   \\&\indeq\indeq\indeq\indeq\indeq\times
   \left|
    \int_{\alpha}^{1}\,d\alpha'
    |\xi|
    e^{(\alpha-\alpha')|\xi|}
   \int^{\infty}_{-\infty}\,d\eta
  |  \tilde f(\xi-\eta)|
    |\eta|^{m+1}
|    \tilde g(\eta)|
    e^{\rho_{\theta'}(\alpha)|\xi|}
   \right|^2
   e^{\theta'|\xi|/2}
   \fcomma
   \end{split}
   \label{EQ51}
  \end{align}
where $0<\theta'<\theta$.
In addition to the previous agreement on $\tilde f$ and $\tilde g$, we
assume here, in addition, that
$\tilde f=\tilde g=0$ for~$\alpha'\geq1$.
We denote the last two integrals in \eqref{EQ51} by $I_1$ and~$I_2$. We only show how to treat the second term $I_2$ as the first is simpler
since it does not have $m+1$ derivatives in the $x$ variable (observe
the factor of $|\eta|^{m+1}$). For $I_2$, we use
  \begin{equation}
   |\eta|
   e^{\theta'|\eta|/2}
   = |\eta| e^{(\theta-\theta')|\eta|/2}e^{\theta|\eta|/2}
   \lec \frac{1}{\theta-\theta'} e^{\theta|\eta|/2}
   \label{EQ94}
  \end{equation}
and Young's inequality in $\alpha$ to write
\begin{align}
  \begin{split}
   I_2
   &\lec
   \int_{-\infty}^{\infty}\,d\xi
   \int_{0}^{\infty}\,d\alpha'
   \left(
    \int^{\infty}_{-\infty}\,d\eta
    |\tilde f(\xi-\eta,\alpha')|
    e^{\theta'(|\xi-\eta|)/2}
    |\eta|^{m+1}
    |\tilde g(\eta,\alpha')|
    e^{\theta'|\eta|/2}
   \right)^2
   \\&
   \lec
   \frac{1}{(\theta-\theta')^2}
   \int_{-\infty}^{\infty}\,d\xi
   \int_{0}^{\infty}\,d\alpha'
   \\&\indeq\indeq\indeq\indeq\indeq\times
   \left(
    \int^{\infty}_{-\infty}\,d\eta
    |\tilde f(\xi-\eta,\alpha')|
    e^{\theta'(|\xi-\eta|)/2}
    |\eta|^{m}
    |\tilde g(\eta,\alpha')|
    e^{\theta'|\eta|/2}
   \right)^2
   \\&
   \lec_{\theta'}
   \frac{1}{(\theta-\theta')^2}
   \int_{0}^{\infty}\,d\alpha'
   \left(
   \int_{-\infty}^{\infty}\,d\xi
      |\tilde f(\xi,\alpha')| e^{\theta'|\xi|/2}
   \right)^2
   \left(
    \int^{\infty}_{-\infty}
    \,d\xi
    |\xi|^{2m}
    |\tilde g(\xi,\alpha')|^2
    e^{\theta|\xi|}
   \right)
   \\&
   \lec_{\theta'}
   \frac{1}{(\theta-\theta')^2}
   \int_{0}^{\infty}\,d\alpha'
   \int_{-\infty}^{\infty}\,d\xi
   \left(
      |\tilde f(\xi,\alpha')| e^{\theta'|\xi|/2}
   \right)^2
   \left(
    \int^{\infty}_{-\infty}\, d\xi
    |\xi|^{2m}
    |\tilde g(\xi,\alpha')|^2
    e^{\theta|\xi|}
   \right)
   \\&
   \lec
   \frac{1}{(\theta-\theta')^2}
   \int_{0}^{\infty}\,d\alpha'
   \int_{-\infty}^{\infty}\,d\xi
   \left(
      |\xi|^{3}  |\tilde f(\xi,\alpha')|^2 e^{\theta'|\xi|}
   \right)
   \left(
    \int^{\infty}_{-\infty} \, d\xi
    |\xi|^{2m}
    |\tilde g(\xi,\alpha')|^2
    e^{\theta|\xi|}
   \right)
   \\&
   \lec
   C_{\theta} |f|^2_{m,\theta'}
   \frac{
   |g|_{m,\theta}^2
   }
   {
   (\theta-\theta')^2
   }
   \fcomma
  \end{split}
   \llabel{EQ92}
  \end{align}
where in the last step we used H\"older's and the Sobolev inequalities
in~$\alpha'$.

Next, we consider the integral
when $y\in[0,1+i\tan\theta']$,
which we denote by $I$,
while
the integral in $y'$ is over $[1+i\tan\theta',1+\theta']$.
We parameterize the integral~$I$
by  $y=\alpha+i\alpha\tan\theta$
and $y'=\alpha'+i(1+\theta-\alpha')\tan\theta/\theta$ so that
$|dy|=(1+\tan^2\theta)^{1/2}$
and
$|dy'|=(1+\tan^2\theta/\theta^2)^{1/2}$.
We then write
  \begin{align}
  \begin{split}
   I
   &\lec
   \int_{-\infty}^{\infty}
   \,d\xi
   \int_{0}^{1+i\tan\theta'}
   \,dy
   \\&\indeq\indeq\indeq\indeq\indeq\times
   \left|
    \int_{1+i\tan\theta'}^{1+\theta'}
    \,dy'
    |\xi|
    e^{|\xi|e^{|\xi|(y-y')}}
    |\xi|^{m}
    \int^{\infty}_{-\infty}
    \,d\eta
    f(\xi-\eta)
    g(\eta)
    |\eta|
    e^{\rho_{\theta'}(y)}
   \right|^2
  \\&
  \lec_{\theta'}
  \int_{-\infty}^{\infty}\,d\xi
  \int_{0}^{1} \,d\alpha
  \\&\indeq\indeq\indeq\indeq\times
  \left|
   \int_{1}^{1+\theta'}\,d\alpha'
   |\xi|e^{(\alpha-\alpha')|\xi|} |\xi|^{m}
   \int^{\infty}_{-\infty}
   \,d\eta 
   |\tilde f(\xi-\eta,\alpha')| \, |\tilde g(\eta,\alpha')|
   |\eta| e^{\theta'|\xi|/2}
  \right|^2
  \\&
  \lec  
  \int_{-\infty}^{\infty}\,d\xi
  \int_{0}^{1} \,d\alpha
  \\&\indeq\indeq\indeq\indeq\times
  \left|
   \int_{1}^{1+\theta'}\,d\alpha'
   |\xi|e^{(\alpha-\alpha')|\xi|} 
   \int^{\infty}_{-\infty}
   \,d\eta
    |\xi-\eta|^{m} |\tilde f(\xi-\eta,\alpha')|\,  |\tilde g(\eta,\alpha')|
   |\eta| e^{\theta'|\xi|/2}
  \right|^2
  \\&\indeq\indeq
  +
  \int_{-\infty}^{\infty}\,d\xi
  \int_{0}^{1} \,d\alpha
  \\&\indeq\indeq\indeq\indeq\indeq\times
  \left|
   \int_{1}^{1+\theta'}\,d\alpha'
   |\xi|e^{(\alpha-\alpha')|\xi|} 
   \int^{\infty}_{-\infty} 
   \,d\eta  
   | \tilde f(\xi-\eta,\alpha')| |\eta|^{m+1} |\tilde g(\eta,\alpha')|
   e^{\theta'|\xi|/2}
  \right|^2
  \fperiod
  \end{split}
   \label{EQ93}
  \end{align}
\colb
As above, we denote the last two integrals by $I_1$ and $I_2$ and only
consider $I_2$, as the other one is easier.
Since
  \begin{align}
  \begin{split}
   \frac{\theta'}{2}|\xi|
   + \frac{\alpha-\alpha'}{2}|\xi|
   &\leq
   \frac{1+\theta'-\alpha'}{2}|\xi|
   +
   \frac{\theta'}{2}|\xi|
   \leq
   \frac{1+\theta'-\alpha'}{2}|\eta|
   +
   \frac{1+\theta'-\alpha'}{2}|\xi-\eta|
   +
   \frac{\theta'}{2}|\xi|
   \\&
   \leq
   \frac{1+\theta-\alpha'}{2}|\eta|
   +
   \frac{1+\theta'-\alpha'}{2}|\xi-\eta|
   +
   \frac{\theta-\theta'}{2}|\eta|
   \fcomma
  \end{split}
   \label{EQ97}
  \end{align}
we get
  \begin{align}
  \begin{split}
   I_2
    &\lec
  \int_{-\infty}^{\infty}\,d\xi
  \int_{0}^{1} \,d\alpha
  \biggl|
   \int_{1}^{1+\theta'}
   \,d\alpha'
   |\xi|e^{(\alpha-\alpha')|\xi|/2}
   \int^{\infty}_{-\infty}
   \,d\eta
  \\&\indeq\indeq\indeq\indeq\indeq\times
   e^{(1+\theta'-\alpha')|\eta|/2}
   |\tilde f(\xi-\eta,\alpha')| |\eta|^{m+1}
      e^{(1+\theta-\alpha')|\eta|/2}
      |\tilde g(\eta,\alpha')|
      e^{(\theta-\theta')|\eta|/2}
  \biggr|^2
  \\&
  \lec
  \frac{1}{(\theta-\theta')^2}
  \int_{-\infty}^{\infty}\,d\xi
  \int_{0}^{1} \,d\alpha
  \biggl|
   \int_{1}^{1+\theta'}
   \,d\alpha'
   |\xi|e^{(\alpha-\alpha')|\xi|/2}
   \int^{\infty}_{-\infty} 
   \,d\eta
  \\&\indeq\indeq\indeq\indeq\indeq\indeq\times
   e^{(1+\theta'-\alpha')|\xi-\eta|/2}
   |\tilde f(\xi-\eta,\alpha')| \, |\eta|^{m}
      e^{(1+\theta-\alpha')|\eta|/2}
      |\tilde g(\eta,\alpha')|
  \biggr|^2
  \\&
  \lec
  \frac{1}{(\theta-\theta')^2}
  \int_{-\infty}^{\infty}\,d\xi
  \int_{-\infty}^{\infty} \,d\alpha
   \biggl|
   \int^{\infty}_{-\infty} 
   \,d\eta
  \\&\indeq\indeq\indeq\indeq\indeq\times
   e^{(1+\theta'-\alpha')|\xi-\eta|/2}
   |\tilde f(\xi-\eta,\alpha') | \, |\eta|^{m}
      e^{(1+\theta-\alpha')|\eta|/2} |\tilde g(\eta,\alpha')|
  \biggr|^2
  \\&
  \lec
  \frac{1}{(\theta-\theta')^2}
  \int_{-\infty}^{\infty} \,d\alpha
   \left|
   \int^{\infty}_{-\infty} 
   \,d\xi
   e^{(1+\theta'-\alpha')|\xi|/2}
  | \tilde f(\xi,\alpha') |
   \right|^2
  \\&\indeq\indeq\indeq\indeq\indeq\times
   \left(
    \int^{\infty}_{-\infty} 
    \,d\xi
     e^{(1+\theta-\alpha')|\xi|}|\xi|^{2m} |\tilde g(\eta,\alpha')|^2
  \right)
  \\&
  \lec
  \frac{1}{(\theta-\theta')^2}
  \int_{-\infty}^{\infty} \,d\alpha
   \left|
   \int^{\infty}_{-\infty} 
   \,d\xi
   |\xi|^{3}
   e^{(1+\theta'-\alpha')|\xi|}
  | \tilde f(\xi,\alpha')|^2
   \right|^2
  \\&\indeq\indeq\indeq\indeq\indeq\times
   \left(
    \int^{\infty}_{-\infty} 
    \,d\xi
     e^{(1+\theta-\alpha')|\xi|}|\xi|^{2m} |\tilde g(\eta,\alpha')|^2
  \right)
  \\&
  \lec
   C_{\theta} |f|^2_{m,\theta'}
   \frac{
   |g|_{m,\theta}^2
   }
   {
   (\theta-\theta')^2
   }
   \fcomma
  \end{split}
  \label{EQ95}
  \end{align}
where in the last step we used H\"older's and the Sobolev inequalities.

The last integral, when $y'\in [1+\theta',\infty[$, is the simplest and we omit the details.
This shows how to estimate
the analytic norm of~$f\partial_{x}g$.

Now, let $\bu$ and $\bv$ be as in the statement.
In the rest of the proof, we sketch the argument needed to settle the
general case using the above argument.
According to \eqref{EQ11}, we need to bound
the expression
  \begin{align}
  \begin{split}
  |\mathbb{P}(\bv\cdot \nabla \bu)|^\text{D}_{m,\theta'}
  &=
  \sum_{i+j\leq m} |\pardx^i\pardy^j\mathbb{P}(\bv\cdot \nabla \bu)|_{\theta'}
  \lec
  \sum_{i+j\leq m} |\mathbb{P}(\pardx^i\pardy^j(\bv\cdot \nabla \bu))|_{\theta'}    
  \fperiod
  \end{split}
  \label{EQ98}
  \end{align}
In the last inequality, we used~\eqref{prop_projection}.
Now, we take a closer look at the last expression in~\eqref{EQ98}.
When $i\ge1$
or if $i+j\leq m-1$,
then the terms can be treated same as above, so we only
need
to address the case $(i,j)=(0,m)$,
when we can apply the Leibniz rule.
Most of the terms lead to no
derivative loss
and can be treated directly, so we only need to bound
  \begin{align}
   |\mathbb{P}(\bu\cdot\nabla \partial_{y}^{m}\bv)|^\text{D}_{m,\theta'}
   \fperiod
   \label{EQ99}
   \end{align}
Note that
  \begin{align}
  \begin{split}
   &
   \bu\cdot\nabla \partial_{y}^{m}\bv
   =
  \begin{pmatrix}
    u_1 \partial_{x}\partial_{y}^{m} u_1 &   u_2 \partial_{y}^{m+1} u_1 \\
     u_1 \partial_{x}\partial_{y}^{m} u_2 &  u_2 \partial_{y}^{m+1} u_2 \\
  \end{pmatrix}
  =
  \begin{pmatrix}
    u_1 \partial_{x}\partial_{y}^{m} u_1 &   u_2 \partial_{y}^{m+1} u_1 \\
    u_1 \partial_{x}\partial_{y}^{m} u_2 & - u_2 \partial_{x}\partial_{y}^{m} u_1 \\
  \end{pmatrix}
  \fperiod
  \end{split}
  \label{EQ100}
  \end{align}
Note that three of the entries have at least one derivative in the $x$ variable
and can thus be treated the same way as above. Thus we only need to address the
term~$u_2 \partial_{y}^{m+1} u_1$.
This term can be treated by integrating by parts in the $y$ variable once, using first
 that $u_2$ vanishes on the boundary and then also applying
the incompressibility condition. 
This shows that the structure of the projection operator transfers the $y$ derivative to the $x$
derivative and, therefore, the term $\mathbb{P}\bu\cdot\bnabla \bv $ can be treated in the same fashion as $f\pardx g$. 
This concludes the proof of  Proposition~\ref{P01}. 
\colb

\subsection{The analytic estimate in the linear case}
Here we prove Lemma~\ref{analy_estimate}.
Given the expression for $F_\ep$ in~\eqref{transport.reg.operator.form} and the estimate~\eqref{EQ11}, for $0<\theta<\theta'$, one may write
\begin{equation}
|\Freg(\bv, \bureg,t)|^{\text{D}}_{m,\theta'}\leq \cpCau R\frac{1}{\theta'-\theta}\left(|\bureg|^\text{D}_{m,\theta'}+\|\bureg\|_{m,a}\right)\fperiod 
\llabel{EQ151}\end{equation}
Therefore, when $0<\theta<\theta(s)$, we have 
\begin{align} 
&
\left|\int_0^t \Freg(\bv, \bureg,s) ds\right|^{\text{D}}_{m,\theta} 
 \leq  \cpCau R \int_0^t \frac{|\bureg|^\text{D}_{m,\theta(s)}+\|\bureg\|_{m,a}}{\theta(s)-\theta}ds \nonumber \\&\indeq
\leq 	\cpCau R \int_0^t \left(1-\beta s /(\theta_0-\theta(s))\right)^{-\gamma} \left(1-\beta s /(\theta_0-\theta(s))\right)^{\gamma} \frac{|\bureg|^\text{D}_{m,\theta(s)}+\|\bureg\|_{m,a}}{\theta(s)-\theta}ds 
\nonumber \\&\indeq\leq \cpCau R  \bigl( |\bureg|^{(\gamma)}_{m,\theta,\beta}    +\sup_{t\in[0,T]} \| \bureg\|_{m,a}\bigr) 
 \int_0^t \left(1-\beta s /(\theta_0-\theta(s))\right)^{-\gamma} \frac{1}{\theta(s)-\theta} ds \nonumber 
  \fperiod
 \end{align}
We now choose $\theta(s)$ in the following way. Let
$\rho=\theta_0-\theta-\beta s$ with $\theta(s)=\theta +\rho/2$, which
then implies
$\theta_0-\theta(s)-\beta s=(\theta_0-\theta-\beta s)/2$. Using this in the above estimate, we may write
 \begin{align}
   \begin{split}
   &
 	\left|\int_0^t \Freg(\bv, \bureg,s) ds\right|^{\text{D}}_{m,\theta} 
   \\&\indeq
 	\leq \cpCau R  \bigl( |\bureg|^{(\gamma)}_{m,\theta,\beta}    +\sup_{t\in[0,T]} \| \bureg\|_{m,a}\bigr) 2^{\gamma +1 } \int_0^t\frac{(\theta_0-\theta(s))^\gamma}{(\theta_0-\theta-\beta s)^{\gamma+1}}ds 
\\&\indeq
 	\leq   	 \cpCau R  \bigl( |\bureg|^{(\gamma)}_{m,\theta,\beta}    +\sup_{t\in[0,T]} \| \bureg\|_{m,a}\bigr) 2^{\gamma +1 }  
 	\frac{(\theta_0-\theta)^\gamma}{\beta \gamma}(\theta_0-\theta-\beta t)^{-\gamma}
  \fperiod
  \end{split}
 \label{DC}
\end{align}
 The last estimate immediately leads to \eqref{est:Freg_unif} in Lemma~\ref{analy_estimate}, while
the inequality \eqref{est:bureg_unif} follows from
\eqref{transport.reg.operator.form} and~\eqref{est:Freg_unif}.

 \subsection{The Sobolev estimate in the linear case}
 \label{subsect:proof_Lemma5.9}
Here we prove Lemma~\ref{Sobol_estimate}.
  Using the standard energy arguments, the expression $\|\bu\|_{m,a}$ satisfies 
  \begin{equation}
  	\frac{1}{2}\frac{d}{dt} \|\bu\|^2_{m,a}\leq \cSob \| \bv\|_{m,a} \|\bu\|^2_{m,a} +\BT_1+\BT_2  \label{est::energy}
  	\fcomma
  	\end{equation}
where $\BT_i$ denotes terms deriving from the fact that the $\|\cdot\|_{m,a}$ involves integration in the normal variable $y$ on~$[a,\infty[$. 
More precisely,
 $$
 \BT_1=\frac{1}{2}\left| \int_{\mathbb{R}} \left(\bv\cdot\bn \left(D^m \bu\right)^2\right)_{y=a} dx\right| \andand
 \BT_2=\sum_{l}\int_{\mathbb{R}} \left(D^l p \,\bn\cdot D^l \bu \right)_{y=a} dx
 \fperiod
 $$
First, we treat~$\BT_1$, when we write
\begin{align}
  \BT_1
  \leq \frac{1}{2}R  \int_{\mathbb{R}} \left( D^m \bu\right)^2_{y=a} dx \leq  R \cSobCau \left(\frac{|\bu|^\text{D}_{m,\theta(t)} }{\theta(t)}\right)^2
 \fcomma
 \label{est::BT1}
 \end{align}
where $\theta(t)$ is such that $\theta(t)>\bar{\theta}$ for $ t\in [0,\theta_0/\beta]$.  
We have used the following estimate on $\left( D^m \bu\right)^2_{y=a}$:
  \begin{align}
    \begin{split}
 \int_{\mathbb{R}} \left( D^m \bu\right)^2_{y=a} dx         &\leq  \int_{\mathbb{R}} \left|D^m \bu(x,\cdot)\right|^2_{L^\infty(a-\bar{\theta},a+\bar{\theta})}dx  \\
    	& \leq \cSob \int_{\mathbb{R}} \left(\left\|D^m \bu(x,\cdot)\right\|^2_{L^2(a-\bar{\theta},a+\bar{\theta})}+
    \left\|\pardy D^m \bu(x,\cdot)\right\|^2_{L^2(a-\bar{\theta},a+\bar{\theta})}	\right)dx  \\
    	& \leq   \cSob\left(  \left( |\bu|^\text{D}_{m,\theta(t)} \right)^2 +\cCau \left(\frac{|\bu|^\text{D}_{m,\theta(t)} }{\theta(t)}\right)^2 \right)  
    	\leq   \cSobCau \left(\frac{|\bu|^\text{D}_{m,\theta(t)} }{\theta(t)}\right)^2
     \fperiod
  \end{split}
     \label{est:Dmu_at_a}
    \end{align}
Next, for~$\BT_2$, we have
  		\begin{align}
\BT_2				&\leq \sum_l\left(\int _{\mathbb{R}} \left(D^l p \right)^2_{y=a}  dx\right)^{1/2} \left(\int _{\mathbb{R}} \left(D^l \bu \right)^2_{y=a}  dx\right)^{1/2} \nonumber \\
				& \leq \cSobCau \frac{|\bu|^\text{D}_{m,\theta(t)} }{\theta(t)} R \left(|\bu|^\text{D}_{m,\theta(t)}+ \|\bu\|_{m,a} \right)
%				\nonumber \\&
				\leq \cSobCau R\left(\|\bu\|_{m,a}^2+\left(\frac{|\bu|^\text{D}_{m,\theta(t)}}{\theta(t)}\right)^2 \right)
\fcomma
\label{est::BT2}
  		\end{align}
where in the second inequality, we have used the inequality~\eqref{est:Dmu_at_a} and the estimate on the pressure
    \begin{align}
      &
  	\left(	\int_{\mathbb{R}} \left( D^m p\right)^2_{y=a} dx   \right)^{1/2}      
  	\leq  \left(\int_{\mathbb{R}} \left|D^m p(x,\cdot)\right|^2_{L^\infty(a-\bar{\theta},a+\bar{\theta})}dx  \right)^{1/2}  \nonumber \\&\indeq
  		 \leq \cSob \left(\int_{\mathbb{R}} \left(\left\|D^m p(x,\cdot)\right\|^2_{L^2(a-\bar{\theta},a+\bar{\theta})}+
  		\left\|\pardy D^m p(x,\cdot)\right\|^2_{L^2(a-\bar{\theta},a+\bar{\theta})}	\right)dx  \right)^{1/2}   \nonumber \\&\indeq
  		\leq  \cSob\left\| p\right\|_{H^{m+1}(a-\bar{\theta},a+\bar{\theta})} 
  		\leq  \cSob \left( \|\bv\|_{H^{m}(a-\bar{\theta},a+\bar{\theta})}  \|\bu\|_{H^{m}(a-\bar{\theta},a+\bar{\theta})} +\|p\|_{L^2}\right) \nonumber \\&\indeq
  		\leq  \cSob \left(R|\bu|^\text{D}_{m,\theta(t)}+ \|\bv\|_{H^2} \|\bu\|_{H^2}\right)
		\nonumber \\&\indeq
  		\leq  \cSob  \left(R|\bu|^\text{D}_{m,\theta(t)}+ R \left(|\bu|^\text{D}_{m,\theta(t)}+\|\bu\|_{m,a}\right)\right)
%		\nonumber \\&\indeq
  		= \cSob R \left(|\bu|^\text{D}_{m,\theta(t)}+ \|\bu\|_{m,a} \right)  
\fperiod
\label{est::pressure}
  	\end{align}
Using \eqref{est::BT1} and \eqref{est::BT2} in \eqref{est::energy}, we get the differential inequality
  \begin{equation}
  \frac{1}{2}\frac{d}{dt} \|\bu\|^2_{m,a}\leq \cSobCau R \left(\|\bu\|^2_{m,a}+ \left(\frac{|\bu|^\text{D}_{m,\theta(t)} }{\theta(t)}\right)^2\right)
  \fperiod
  \llabel{EQ152}
  \end{equation}
  The above estimate for $t<(\theta_0-\bar{\theta})/\beta$ in order to have a sufficient analyticity to estimate, using the Cauchy inequality, 
  the higher derivatives $D^{m+1}\bu$ at $y=a$. 
  
Using the Gronwall Lemma, for $ 0<t<(\theta_0 -\bar{\theta})/\beta$, we may write
  \begin{align}
    \begin{split}
  &
  \|\bu(\cdot, t)\|^2_{m,a}\leq e^{2\cSobCau Rt}\left(\|\buin\|^2_{m,a}+  2\cSobCau \int_0^t  |\bv|^\text{D}_{m,\theta(s)}    \left(  \frac{|\bu|^{\text{D}}_{m,\theta(s)}}{\theta(s)} \right)^2 ds \right)    \\
  &\indeq\leq e^{2\cSobCau Rt}\left(\|\buin\|^2_{m,a}+  2\cSobCau R \int_0^t    \left(  \frac{|\bu|^{\text{D}}_{m,\theta(s)}}{\theta(s)} \right)^2 ds \right)    \\
  &\indeq\leq e^{2\cSobCau Rt}\left(\|\buin\|^2_{m,a}+  2\cSobCau R \left( |\bu|^{(\gamma)}_{m,\theta_0,\beta}\right)^2   \int_0^t    \left(1-\frac{\beta s}{\theta_0-\theta(s)}\right)^{-2\gamma}   
  \left(  \frac{1}{\theta(s)} \right)^2 ds \right)    \nonumber \\
 &\indeq
 \leq e^{2\cSobCau Rt}\|\buin\|^2_{m,a} + e^{2\cSobCau Rt} 2\cSobCau R 2^{2(\gamma+1)} \frac{\theta_0^{2\gamma}}{\bar{\theta}^{2(\gamma+1)}}\frac{\theta_0-\bar{\theta}}{\beta}\left( |\bu|^{(\gamma)}_{m,\theta_0,\beta}\right)^2\fperiod
   \llabel{EQ155} 
  \end{split}
  \label{Sobol.with.diamond}
  \end{align}
  To get the last inequality, we have chosen $\theta(s)=\left(\theta_0-\beta s\right)/2$ and used that $\theta(s)\geq \bar{\theta}/2$ and $t\leq \left(\theta_0-\bar{\theta}\right)/\beta$.  
From the above estimate, we conclude that 
\begin{equation}
 \|\bu(\cdot, t)\|_{m,a}\leq e^{\cSobCau Rt}\|\buin\|_{m,a}+ e^{\cSobCau Rt}\sqrt{2\cSobCau R}2^{\gamma+1}\frac{\theta_0^{\gamma+1/2}}{\bar{\theta}^{\gamma+1}}\frac{1}{\sqrt{\beta}}  |\bu|^{(\gamma)}_{m,\theta_0,\beta}\fperiod 
\llabel{EQ153}\end{equation}
 Therefore, we have obtained the estimate~\eqref{est:Sob_bureg} with 
 \begin{equation}
 D(R,T) =e^{\cSobCau Rt}\sqrt{2\cSobCau R}2^{\gamma+1}\frac{\theta_0^{\gamma+1/2}}{\bar{\theta}^{\gamma+1}}\frac{1}{\sqrt{\beta}}\fcomma
 \llabel{EQ154}\end{equation} 
which concludes the proof of Lemma~\ref{Sobol_estimate}.

  \section{The analytic-Sobolev estimate in the nonlinear case}% {Proof of Proposition~\ref{prop:contractiveness} and Lemmas \ref{differences.analytic.est} and \ref{differences.Sob.est}}
  \label{sect:proof_p6.2_l6.3_6.4}
  
  \subsection{Proof of Proposition~\ref{prop:contractiveness}} 
We first introduce constants
  \be
A_1=\cpCau\frac{2^{\gamma+1}}{\gamma},  \qquad A_2=\cpro\frac{2^\gamma \theta_0}{(1+\gamma)},
\qquad A_3=\cSobCau e^{\cSobCau R T}, \qquad A_4= \frac{\cSobCau}{\bar{\theta}^{\gamma+1}}e^{\cSobCau R T}
\fperiod\label{express_ABC}
  \ee
The estimates given in Lemma~\ref{differences.analytic.est} and \ref{differences.Sob.est} are equivalent to 
  \begin{align}
&
	|\bznpo|^{(\gamma)}_{m-1,\theta_0, \beta_n,  T }
\\&\indeq
\leq \left(1-\frac{R}{\beta_n}A_1\right)^{-1} \frac{R}{\beta_n} \left(A_1 \sup_{0\leq t<T} \| \bznpo \|_{m-1,a} + 
	A_2  |||\bzn |||^{(\gamma)}_{m-1,\theta_0,\beta_n,a,T}   \right)
	\label{eq::127}
	\end{align}
and
\begin{align}
 \sup_{t}\|\bznpo\|_{m-1,a} & \leq 
 \sqrt{\frac{R}{\beta_n}}  \left( A_3   \sup_{0\leq t<T} \|    \bzn \|_{m-1,a} + A_4    |\bznpo|^{(\gamma)}_{m-1,\theta_0,\beta_n,   T } \right) \fcomma
 \label{eq::128} 
  \end{align}
provided that $\beta_n$ is large enough to ensure  $\left(1-RA_1/{\beta_n}\right)>0$; to this end, recalling that $\beta_n\geq \beta/2$, we  impose   
  \be
  \beta>4RA_1
  \label{first.cond.onbeta}
  \ee
so that $\left(1-{R}A_1/{\beta_n}\right)^{-1}<2$, for all~$n$.  Note that if $\beta$ satisfies \eqref{first.cond.onbeta}, then
  $$
  e^{\cSobCau R T}\leq \exp{\frac{\cSobCau \theta_0 \gamma}{2^{\gamma+3} \cpCau}}
  \fperiod
  $$
Inserting the estimate \eqref{eq::127} into \eqref{eq::128}, we get
  \begin{align}
     \begin{split}
  &
  \biggl(1-A_1A_4\left(\frac{R}{\beta_n}\right)^{3/2}\left(1-{R}A/{\beta_n}\right)^{-1}\biggr)
  \sup_{t}\|\bznpo\|_{m-1,a}
  \\&\indeq
  \leq \sqrt{\frac{R}{\beta_n}}\left(A_3 +A_2A_4\frac{R}{\beta_n}\left(1-{R}A/{\beta_n}\right)^{-1} \right)
   |||\bzn |||^{(\gamma)}_{m-1,\theta_0,\beta_n,a,T}
   \fperiod
   \end{split}
  \end{align}
Using \eqref{first.cond.onbeta} and imposing also
  \be
  \beta >2\left(4A_1A_4\right)^{2/3} R
  \fcomma\label{second.cond.onbeta}
  \ee
  so that $  \beta_n >\left(4A_1A_4\right)^{2/3} R$, 
  one readily derives 
  $$
   \sup_{t}\|\bznpo\|_{m-1,a} \leq 2 \sqrt{\frac{R}{\beta_n}} \left(A_3+A_2A_4\right)  |||\bzn |||^{(\gamma)}_{m-1,\theta_0,\beta_n,a,T}
   \fperiod
  $$
Inserting the above bound in \eqref{eq::127}, we get
  $$
  	|\bznpo|^{(\gamma)}_{m-1,\theta_0, \beta_n,  T }\leq   2  \frac{R}{\beta_n}\left(2 \sqrt{\frac{R}{\beta_n}}A_1\left(A_3+A_2A_4\right)+A_2  	\right)  |||\bzn |||^{(\gamma)}_{m-1,\theta_0,\beta_n,a,T}
	\fperiod
  $$
 Adding the two estimates above and using $R/\beta_n<1/2$, we may finally write
  $$
   |||\bznpo |||^{(\gamma)}_{m-1,\theta_0,\beta_n,a,T} \leq A \sqrt{\frac{R}{\beta_n}} |||\bzn |||^{(\gamma)}_{m-1,\theta_0,\beta_n,a,T}, 
  $$
  where 
  \be
  A= (A_3+A_2A_4)(2+2A_1+\sqrt{2}) \label{express_of_A}
 % 2 \left(B+\frac{C}{2}\right)+  2 A \left( \sqrt{\frac{1}{2}}\left(B+\frac{C}{2}\right)+1  	\right)
 \fperiod
  \ee
  The constants $A_i$ are given in~\eqref{express_ABC}.
  
  Exactly with the same procedure, one can prove the estimate in the norm\\$|||\cdot |||^{(\gamma)}_{m-1, \theta_0,\beta, a,T}$.

  \colb
  \subsection{Proof of Lemma~\ref{differences.analytic.est}}
The proof of Lemma~\ref{differences.analytic.est} goes along the same lines as the proof of Lemma~\ref{analy_estimate}. 
In fact, from \eqref{diff_operatorform}, we may write
  \begin{align}
    \begin{split}
  |\bznpo|^\text{D}_{m-1,\theta}\leq &\int_0^t  |\mathbb{P}\bu^n\cdot \bnabla \bznpo|^\text{D}_{m-1,\theta} ds +  \int_0^t  |\mathbb{P}\bzn\cdot \bnabla \bu^n|^\text{D}_{m-1,\theta} ds 
   \\  
  \leq & \cpCau R \int_0^t \frac{|\bznpo|^\text{D}_{m-1,\theta(s)}+\|\bznpo\|_{m-1,a}}{\theta(s)-\theta}ds
   \\&\indeq
  +
 \cpro R\int_0^t  \left(|\bzn|^\text{D}_{m-1,\theta(s)} + \|\bzn\|_{m-1,a}  \right)ds \\
=&  \cpCau R \int_0^t \frac{\left(|\bznpo|^\text{D}_{m-1,\theta(s)}+\|\bznpo\|_{m-1,a}\right) \left(\theta_0-\theta(s)-\beta_n s\right)^{\gamma}}
{\left(\theta(s)-\theta\right) \left(\theta_0-\theta(s)-\beta_n s\right)^{\gamma}}ds
 \\&\indeq+
 \cpro R\int_0^t  \frac{\left(|\bzn|^\text{D}_{m-1,\theta(s)} + \|\bzn\|_{m-1,a}  \right)   \left(\theta_0-\theta(s)-\beta_n s\right)^{\gamma}}
{ \left(\theta_0-\theta(s)-\beta_n s\right)^{\gamma}}ds \\
 \leq &R \cpCau  \frac{2^{\gamma+1}\theta_0^\gamma}{\gamma \beta_n} \left(|\bznpo|^{(\gamma)}_{m-1,\theta_0,\beta_n,T} +\sup_{0\leq t \leq T} \|\bznpo\|_{m-1,a} \right)
  \left(\theta_0-\theta-\beta_n t \right)^{-\gamma}\\&\indeq+
    R \cpro \frac{2^\gamma\theta_0^{1-\gamma}}{(1-\gamma)\beta_n} \left(|\bzn|^{(\gamma)}_{m-1,\theta_0,\beta_n,T} +\sup_{0\leq t \leq T} \|\bzn\|_{m-1,a} \right)
  \fperiod
  \end{split}
  \label{HG}
  \end{align}
To get the last inequality, we have chosen $\theta(s)$ so that $\theta_0-\theta(s)-\beta_n s=(\theta_0-\theta-\beta_n s)/2$, as we did proving the 
  Lemma~\ref{analy_estimate} to get the estimate~\eqref{DC}.
  Multiplying both sides of \eqref{HG} times  $\left(\theta_0-\theta-\beta_n t \right)^{\gamma} $, one gets
  \begin{align}
  	\begin{split}
  |\bznpo|^{(\gamma)}_{m-1,\theta_0,\beta_n,T} \leq &
  R \cpCau \frac{2^{\gamma+1}}{\gamma \beta_n} 
  \left(	|\bznpo|^{(\gamma)}_{m-1,\theta_0, \beta_n,T } +   \sup_{0\leq t<T}  \| \bznpo\|_{m-1,a} \right) \\&\indeq+
 R \cpro \frac{2^\gamma\theta_0}{(1-\gamma)\beta_n} \left( |\bzn|^{(\gamma)}_{m-1,\theta_0, \beta_n,T} +   \sup_{0\leq t<T}  \| \bzn\|_{m-1,a}\right)
\fperiod
   \llabel{EQ04}
\end{split}
  \end{align}

  \subsection{Proof of Lemma~\ref{differences.Sob.est}}
Applying $D^l$ to \eqref{Diff}, taking the product with $D^l\bznpo$, integrating on $\mathbb{R}\times [a,\infty]$, 
and taking the sum for $l=0,\ldots,m-1$, we obtain
  \begin{align}
	\frac{1}{2}\frac{d}{dt}\|\bznpo\|^2_{m-1,a} &\leq 
	\cSob \left(  \|\bu^n\|_{m-1,a}   \|\bznpo\|^2_{m-1,a}+ 
	\|\bu^n\|_{m,a}  \|\bznpo\|_{m-1,a}  \|\bzn\|_{m-1,a}\right)    \nonumber \\
	 &\indeq+\BT_1+\BT_2 \comma 0<t<(\theta_0 -\bar{\theta})/\beta\fcomma  \label{348}
\end{align}
where 
\begin{equation}
  \begin{split}
\BT_1&=\frac{1}{2}\left| \int_{\mathbb{R}} \left(\bzn\cdot\bn \left(D^{m-1} \bznpo\right)^2\right)_{y=a} dx\right|
  \\
\BT_2&=\int_{\mathbb{R}} \left(D^{m-1} \pi^{n+1} \,\bn\cdot D^{m-1} \bznpo \right)_{y=a} dx
\fperiod
  \end{split}
   \llabel{EQ05}
\end{equation}
The expressions of the boundary terms $\BT_1$ and $\BT_2$ above are very similar to the expressions of the boundary terms appearing in Section~\ref{subsect:proof_Lemma5.9}. 
Exactly in the same fashion, one can derive estimates similar to
\eqref{est::BT1} and \eqref{est::BT2}, which read
\begin{align}
	\BT_1&  \leq  R \cSobCau \left(\frac{|\bznpo|^\text{D}_{m-1,\theta(t)} }{\theta(t)}\right)^2\fcomma \nonumber \\
	\BT_2 & \leq \cSobCau R\left(\|\bznpo\|_{m-1,a}^2+\left(\frac{|\bznpo|^\text{D}_{m-1,\theta(t)}}{\theta(t)}\right)^2 \right)\fperiod \nonumber 
\end{align}
Using these bounds in \eqref{348}, we readily obtain
   \begin{align}
 	\frac{1}{2}\frac{d}{dt}\|\bznpo\|^2_{m-1,a} \leq & 
 	\cSobCau R \left(    \|\bznpo\|^2_{m-1,a}+ 
 	   \|\bzn\|^2_{m-1,a} +
 	 \frac{|\bznpo|^{\text{D}}_{m-1,\theta(t)}}{\theta(t)}\right) ^2
 	\fcomma
   \llabel{EQ06}
 \end{align}
for $0<t<(\theta_0 -\bar{\theta})/\beta$.
Applying Gronwall's lemma, we get for $0<t<(\theta_0 -\bar{\theta})/\beta$,
 \begin{align}
 &
 \| \bznpo \|^2_{m-1,a}
  \\&\indeq
  \leq e^{2\cSobCau Rt}
 \left( 2\cSobCau tR\sup_{0\leq t\leq (\theta_0-\bar{\theta})/\beta} \|\bzn\|^2_{m-1,a} + 2\cSobCau R\int_0^t \left( \frac{|\bznpo|^\text{D}_{m-1,\theta(s)}}{\theta(s)}\right)^2ds
\right)
\nonumber \\
&\indeq\leq e^{2\cSobCau Rt} \Biggl( 2\cSobCau R\frac{\theta_0}{\beta}\sup_{0\leq t\leq (\theta_0-\bar{\theta})/\beta} \|\bzn\|^2_{m-1,a}
\\&\indeq\indeq\indeq\indeq\indeq\indeq\indeq\indeq
+ 2\cSobCau R 2^{2} \frac{\theta_0^{2\gamma}}{\bar{\theta}^{2(\gamma+1)}}
\frac{\theta_0-\bar{\theta}}{\beta}\left( |\bznpo|^{(\gamma)}_{m-1,\theta_0,\beta,T} \right)^2
\Biggr) \fcomma\nonumber
 \end{align} 
from which it follows that
 \be
  \| \bznpo \|_{m-1,a} \leq  e^{\cSobCau Rt} \frac{\sqrt{R}}{\sqrt{\beta}} \cSobCau \left(   \sup_{0\leq t\leq (\theta_0-\bar{\theta})/\beta} \|\bzn\|_{m-1,a} + \frac{|\bznpo|^{(\gamma)}_{m-1,\theta_0,\beta,T}}{\bar{\theta}^{\gamma+1}} \right)
  \fcomma
 \ee
for
$0<t<(\theta_0 -\bar{\theta})/\beta$.

  \section{Conclusions}
  
  In this paper, using the velocity formulation of the Euler equations in the half-space, we have shown that a datum with analytic-Sobolev regularity (analytic close to the boundary and Sobolev-regular away from the boundary) gives rise to a solution with the same regularity. 
We believe that the domain of analyticity of our solution is optimal: we do not need the initial datum to be analytic away from the boundary while, close to the boundary, we do not need to have uniform analyticity; moreover, close to the boundary, we can have a domain of analyticity whose radius of analyticity can have an arbitrary linear dependence from the distance from the boundary; in other words, the angle $\theta$ in Fig.~\ref{fig1.a} needs to be less than $\pi/2$, improving the limitation $\theta<\pi/4$ of the previously appeared results on analytic solutions of the Euler equations.

The main technical tools that we used to get this result are: 
First, we have established new estimates on the Leray projection operator, namely we obtain obtained that it is bounded in the analytic-Sobolev norm and supports the Cauchy estimate for analytic functions. Here the main challenge is the nonlocal character of the projection operator. 
Second, we have shown the existence of the solution for the linearized problem by an analytic regularization of the equation and proving uniform (with respect to the size of the regularization) estimates of the obtained solution. 
Third, we have constructed the solution through an iterative procedure that, given that the $(n+1)$-th iterate is defined implicitly in terms of the $n$-th iterate, is nonstandard in the context of the abstract Cauchy-Kowalevskaya theorem. We have achieved the convergence of this procedure using a weighted norm and shrinking at each iterative step the domain of analyticity, an idea inspired by~\cite{Asa88}. 

Several problems would be interesting to address. If one imposes to the Navier-Stokes equations an initial datum 
with the analytic-Sobolev regularity that we have investigated in this paper, can one construct the NS solution with the same analytic-Sobolev regularity and prove the 
convergence to the corresponding Euler solution? Of interest also is the case when the Euler datum does not satisfy the no-slip condition, 
as in \cite{ACS24}, where the incompatibility between the initial and the boundary conditions gives rise to singular terms that are challenging to control. 

The geometry we have considered in this paper is the half-plane or the half-space; it would be interesting to consider the case when the boundary is curved, 
or is an interface that moves according to the dynamics of the problem, as is the case with vortex sheets or vortex layers \cite{CLS20} or vortex patches \cite{LSZ2020}
 or the hydrostatic equations \cite{KTVZ11}. 

\section*{Acknowledgments}
This work is based upon work supported by the National Science Foundation under Grant No.~DMS-1928930 while the authors were in residence at the Simons Laufer Mathematical Sciences Institute (formerly MSRI) in Berkeley, California, during the summer of~2023.
IK was supported in part by the NSF grant DMS-2205493,
MCL was supported in part by the grant PRIN~2022M9BKBC, 
while
MS was supported in part by the grant PRIN-PNRR P202254HT8.

\colb

\end{document}